\documentclass[12pt]{article}

\usepackage{amsfonts}
\usepackage{amssymb}
\usepackage{amsmath}
\usepackage{enumitem}
\usepackage{float}
\usepackage{xspace}
\usepackage{geometry}
\geometry{margin=1in}
\usepackage{centernot}
\usepackage{cancel}

\usepackage[pdftex]{graphicx}

\pagenumbering{arabic}

\usepackage{graphicx}

\usepackage{amsthm}
\usepackage{thmtools}

\newtheorem{lemma}{Lemma}[section]
\newtheorem{prop}[lemma]{Proposition}
\newtheorem{corr}[lemma]{Corollary}
\newtheorem{theorem}[lemma]{Theorem}
\newtheorem{result}[lemma]{Result}
\theoremstyle{definition}
\newtheorem{deff}[lemma]{Definition}

\theoremstyle{definition}

\theoremstyle{definition}

\newtheoremstyle{mythmstyle}%
    {}%
    {}%
    {\it}%
    {}%
    {\bf}%
    {}%
    { }%
    {\thmname{#1}\thmnumber{ #2}%
    \thmnote{: #3\addcontentsline{toc}{subsubsection}{{\it#1}: #3}}. }
\theoremstyle{mythmstyle}

\usepackage[capitalize]{cleveref}

\Crefname{thm}{Theorem}{Theorems}
\Crefname{lem}{Lemma}{Lemmas}
\Crefname{deff}{Definition}{Definitions}
\Crefname{prop}{Proposition}{Propositions}
\Crefname{corr}{Corollary}{Corollaries}

\usepackage{tikz}
\usetikzlibrary{arrows}
\usetikzlibrary{graphs,graphs.standard,quotes}

\usepackage{amsopn}
\DeclareMathOperator{\tra/}{trace}
\DeclareMathOperator{\ker/}{ker}
\DeclareMathOperator{\ink/}{IK}

\def\In/{$\mathcal{I}_n$}
\def\lcs/{$\mathfrak{LC}(S)$}
\def\rcs/{$\mathfrak{RC}(S)$}

\usepackage{titlesec}
\titleformat{\section}
  {\normalfont\large\scshape}{\thesection}{1em}{}

\usepackage{authblk}
\author{Matthew Brookes \thanks{Email: {\tt mdkb500@york.ac.uk}}}
\affil{{\small \em Department of Mathematics, University of York,} \\ \small{\em York, YO10 5DD}}

\usepackage{titling}

\predate{}
\postdate{}
\date{}

\pretitle{\begin{center}\huge\scshape}
\posttitle{\end{center}}

\begin{document}
\title{The Lattice of One-Sided Congruences on an Inverse Semigroup}
\maketitle

\begin{abstract}

We build on the description of left congruences on an inverse semigroup in terms of the kernel and trace due to Petrich and Rankin.  The notion of an inverse kernel for a left congruence is developed.  Various properties of the trace and inverse kernel are discussed, in particular that both the trace and inverse kernel maps are onto $\cap$-homomorphisms.  The lattice of left congruences is identified as a subset of the direct product of the lattice of congruences on the idempotents and the lattice of full inverse subsemigroups. We use this to describe the lattice of left congruences on the bicyclic monoid.

It is shown that that every finitely generated left congruence is the join of a finitely generated trace minimal left congruence and a finitely generated idempotent separating left congruence.  Characterisations of inverse semigroups that are left Noetherian, or such that the universal congruence is finitely generated are given.
\end{abstract}

\section{Introduction}

There is a rich history of the study of congruence lattices for inverse semigroups.  In broad generality there are two ways to describe a congruence.  The first uses the fact that a congruence is completely determined by the equivalence classes of idempotents, and thus describes such collections of sets; the second makes use of the fact that a congruence is determined by its trace, the restriction to the idempotents, and its kernel, the union of the congruence classes containing idempotents.

Following the former philosophy, in 1974 Meakin \cite{Meakin} developed left-kernel systems which are exactly the collections of subsets that are the equivalence classes of a left congruence containing idempotents.  As well, he showed that every congruence on the semilattice of idempotents arises as the restriction of a left congruence on the whole semigroup, and that the set of left congruences that agree on the idempotents forms an interval in the lattice of left congruences.  Also, a description of the maximum and minimum elements in this interval is given.

Following the second philosophy, in 1992 Petrich and Rankin \cite{PetrichRankin} described the kernel-trace approach to one-sided congruences on inverse semigroups.  From a lattice perspective the kernel-trace approach is advantageous as the ordering of left congruences is induced from the natural orderings on the sets of kernels and traces.  Petrich and Rankin show that the function taking a left congruence to its trace is a $\cap$-homomorphism but not a $\vee$-homomorphism, and that the corresponding function for the kernel is neither.  They also show that the set of left congruences with a given kernel has a maximum element, but in general no minimum element.

In this paper we introduce and characterise the {\em inverse kernel} for left a congruence and give necessary and sufficient conditions for an inverse subsemigroup and a congruence on the idempotents to form an inverse congruence pair (that is the trace and inverse kernel of a left congruence).  We show that the set of congruences that share an inverse kernel has a minimum element.  Also, we describe the sublattice of left congruences having a fixed trace.  The function taking a left congruence to its inverse kernel is shown to be a $\cap$-homomorphism, and a description of the join of two left congruences in terms of the trace and inverse kernel is given.  We apply the inverse kernel description to the bicyclic monoid to recover the lattice of left congruences (see Nico \cite{nico}, and Duchamp \cite{duchamp1986etude}). Finally we consider left congruences in terms of generating sets.  It is shown that we can choose generating sets that exactly correspond to the trace and inverse kernel, and we determine a generating set for the trace in terms of the generating set for the initial left congruence.  This leads to a discussion of finitely generated left congruences, and it is shown that the standard description of left Noetherian semigroups \cite{kozhukhov1980semigroups} is a natural consequence of the inverse kernel trace description for left congruences.

\section{Preliminaries}

We assume a familiarity with standard results in inverse semigroup theory (\cite{Howie} Chapter 5).  Unless otherwise stated $S$ will denote to an inverse semigroup, and $E=E(S)$ will be the semilattice of idempotents of $S.$  Greek letters denote equivalence relations, and $[a]$ is the equivalence class of $a,$ or $[a]_\rho$ if $\rho$ is not clear.

The set of left congruences on $S$ can be ordered by inclusion; we denote this lattice by \lcs/.  It is straightforward that the identity congruence $\iota,$ and the universal congruence $\omega,$ are minimum and maximum elements respectively in \lcs/.  Also write \rcs/ for the lattice of right congruences on $S$.   
A subsemigroup of $S$ is said to be {\em full} if it contains all idempotents of $S.$  The set of all full subsemigroups of $S$ forms a lattice under set inclusion; which is denoted $\mathfrak{K}(S).$  The set $\mathfrak{V}(S)$ of full inverse subsemigroups of $S$ forms a subset of $\mathfrak{K}(S).$

Given a binary relation $R\subseteq S\times S,$ it is often of interest to consider the smallest left congruence containing $R.$  We will denote this by $\langle R\rangle.$  We recall that for $R\subseteq S\times S,$ we have $a\ \langle R\rangle\ b$ if and only if $a=b$ or there are sequences $\{ (x_i,y_i)\ |\ i=1,\dots,n \}\subseteq R,$ and $\{c_i\ |\ i=1,\dots,n \}\subseteq S $ with 
$$a=c_1x_1,\ c_ix_i=c_{i+1}y_{i+1} \text{ for } i=1,\dots,n-1,\ c_ny_n=b.$$ 
We call such a pair of sequences an $R$ sequence from $a$ to $b$.  Again for a binary relation $R$ on $S$ and a subset $T\subseteq S,$ write $R|_T= R\cap (T\times T);$ the restriction of $R$ to $T.$ 

\begin{deff}\label{deff1}
For an equivalence relation $\kappa$ on $S,$ the \textit{kernel} of $\kappa$ is: 
$$\ker/(\kappa)=\bigcup_{e\in E} [e]_\kappa$$ 
and  the \textit{trace} of $\kappa$ is:
$$\tra/(\kappa)=\kappa|_E.$$
\end{deff}

In general this will be applied to a left congruence $\rho$, in which case it is easily verified that the the trace is a congruence on $E.$  Furthermore the kernel is a full subsemigroup of $S,$ however in general it is not an inverse subsemigroup.  The following is the main result of \cite{PetrichRankin}, and says that a congruence is determined by its kernel and trace.  

\begin{theorem}[\cite{PetrichRankin} 3.5] \label{PR1}
Let $\tau$ be a congruence on $E,$ and $K$ a full subsemigroup of $S$ that has the following properties:
\begin{enumerate}[label={(\roman*)}]
\item for all $a\in S,\ b\in K, a\geq b \text{ and } a^{-1}a\ \tau \ b^{-1}b$ imply that $a\in K$;
\item for all $a\in K,\ e,f\in E,\ e\ \tau \ f $ implies that $a^{-1}ea\ \tau \ a^{-1}fa$;
\item for every $a\in K,$ there exists $b\in S$ with $a\geq b,\ a^{-1}a\ \tau \ b^{-1}b$ and $b^{-1}\in K$.
\end{enumerate}
Then define a relation $\rho=\rho_{(K,\tau)}:$
$$a\ \rho\ b\ \iff\ a^{-1}b,b^{-1}a\in K,\ a^{-1}bb^{-1}a\ \tau\ a^{-1}a,\ b^{-1}aa^{-1}b\ \tau\ b^{-1}b.$$
The relation $\rho_{(K,\tau)}$ is a left congruence such that $(K,\tau)=(\ker/\rho_{(K,\tau)},\tra/\rho_{(K,\tau)}).$

Conversely if $\rho$ is a left congruence on $S$ then the pair $(\ker/(\rho),\tra/(\rho))$ obeys $(i),(ii),(iii)$ and $\rho=\rho_{(\ker/(\rho),\tra/(\rho))}.$ 
\end{theorem}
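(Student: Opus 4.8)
The plan is to handle the two directions separately, first isolating one elementary fact used throughout. \emph{Order lemma}: for any left congruence $\rho$, if $c\le d$ in the natural partial order and $c^{-1}c\ \tra/(\rho)\ d^{-1}d$, then $c\ \rho\ d$; this is immediate, since $c=d\,c^{-1}c$, so left compatibility carries $c^{-1}c\ \rho\ d^{-1}d$ to $c=d\,c^{-1}c\ \rho\ d\,d^{-1}d=d$. I also use that the kernel of a left congruence is a full subsemigroup, and that for idempotents $e,f$ we have $e\ \tra/(\rho)\ f$ exactly when $e\ \rho\ f$.

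For the construction direction I would show that $\rho=\rho_{(K,\tau)}$ is a left congruence with $\ker/(\rho)=K$ and $\tra/(\rho)=\tau$. Reflexivity and symmetry are immediate: each defining clause is symmetric in $a,b$, and fullness of $K$ with reflexivity of $\tau$ gives $a\ \rho\ a$. Transitivity is the first substantial step: from $a\ \rho\ b$ and $b\ \rho\ c$ one combines the kernel witnesses $a^{-1}b,\ b^{-1}c$ (and their inverses), whose products lie in $K$ since $K$ is a subsemigroup, to produce an element of $K$ below $a^{-1}c$ with the correct domain idempotent; condition $(ii)$ transports the trace equalities across the middle term $b$, and condition $(i)$ then upgrades this witness to $a^{-1}c\in K$, the trace clauses for $(a,c)$ following similarly. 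Left compatibility is a direct check from the definition, again using $(ii)$ for the conjugated idempotents. Restricting to idempotents, the defining clauses collapse to $ef\ \tau\ e$ and $ef\ \tau\ f$, equivalently $e\ \tau\ f$, so $\tra/(\rho)=\tau$; and $\ker/(\rho)\subseteq K$ follows from $(i)$ applied to the witness $ga\le a$ for $a\ \rho\ g$, while the reverse inclusion is exactly where $(iii)$ enters, supplying for $a\in K$ an element $b\le a$ with $b^{-1}\in K$ and $a^{-1}a\ \tau\ b^{-1}b$, from which $a\ \rho\ bb^{-1}$ is verified.

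For the converse, that $(\ker/(\rho),\tra/(\rho))$ satisfies conditions $(i)$, $(ii)$ and $(iii)$ is a sequence of direct computations from left compatibility of $\rho$ (for $(iii)$ one restricts $a$ by an idempotent $e$ with $a\ \rho\ e$, taking $b=ea$). For the equality $\rho=\rho_{(\ker/(\rho),\tra/(\rho))}$ the inclusion $\subseteq$ is clean: if $a\ \rho\ b$ then left-multiplying by $a^{-1}$ and by $b^{-1}$ gives $a^{-1}b\ \rho\ a^{-1}a$ and $b^{-1}a\ \rho\ b^{-1}b$, so both products lie in $\ker/(\rho)$, and left-multiplying $a^{-1}b\ \rho\ a^{-1}a$ by $(a^{-1}b)^{-1}$ and combining with $b^{-1}a\ \rho\ b^{-1}b$ produces the trace clause $b^{-1}aa^{-1}b\ \tra/(\rho)\ b^{-1}b$, the other clause being symmetric. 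For $\supseteq$, assume the four conditions hold; the order lemma applied with $c=aa^{-1}b\le b$ and with $c=bb^{-1}a\le a$ immediately yields $aa^{-1}b\ \rho\ b$ and $bb^{-1}a\ \rho\ a$, so it remains only to show $aa^{-1}b\ \rho\ bb^{-1}a$.

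I expect this last bridging step to be the main obstacle, and it is precisely where the kernel's failure to be closed under inverses must be confronted: from $a^{-1}b\in\ker/(\rho)$ alone one cannot conclude $a^{-1}b\ \rho\ a^{-1}bb^{-1}a$. The route through uses that \emph{both} $a^{-1}b$ and $b^{-1}a=(a^{-1}b)^{-1}$ lie in the kernel, together with condition $(iii)$: applied to the kernel element $a^{-1}b$, condition $(iii)$ furnishes a lower bound with $\tau$-matched domain whose inverse again lies in the kernel, and feeding this through the order lemma and left compatibility identifies $aa^{-1}b$ with $bb^{-1}a$ and hence gives $a\ \rho\ b$. The genuine content of the theorem is concentrated in this step and in the transitivity argument; everything else is bookkeeping with the natural order and with commuting idempotents.
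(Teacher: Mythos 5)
A preliminary remark: the paper does not prove this theorem itself --- it is imported from Petrich and Rankin as their Theorem~3.5 --- so there is no in-paper proof to compare against directly; the closest analogues are the proof of \cref{cor4} (the inverse-kernel version of the construction direction) and the computation inside \cref{lem1}. Judged on its own terms, your skeleton is the right one. The order lemma is correct, and reflexivity, symmetry, left compatibility, the restriction to idempotents, and both kernel inclusions are handled soundly. The transitivity sketch also goes through, provided the trace clauses for $(a,c)$ are established \emph{before} the kernel clause: one first shows $a^{-1}cc^{-1}a\ \tau\ a^{-1}a$ and $c^{-1}aa^{-1}c\ \tau\ c^{-1}c$ using $(ii)$ with the conjugators $b^{-1}a,b^{-1}c\in K$, and only then does the domain idempotent of the witness $a^{-1}bb^{-1}c\in K$ match that of $a^{-1}c$ up to $\tau$ so that $(i)$ applies. (Left compatibility, incidentally, needs only that $\tau$ is a congruence on $E$, not condition $(ii)$.)

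The one place where what you have written would not close into a proof is the bridging step $aa^{-1}b\ \rho\ bb^{-1}a$ in the inclusion $\rho_{(\ker/(\rho),\tra/(\rho))}\subseteq\rho$. The order lemma cannot be ``fed through'' here: neither $aa^{-1}b$ nor $bb^{-1}a$ lies below the other in the natural partial order, and left compatibility alone does not identify them. What is actually needed is the lemma that if $x$ and $x^{-1}$ both lie in $\ker/(\rho)$ then $x\ \rho\ xx^{-1}$, applied to $x=a^{-1}b$; left multiplication by $a$ then gives $aa^{-1}b\ \rho\ aa^{-1}bb^{-1}a=bb^{-1}a$. That lemma requires a genuine zig-zag: with $x\ \rho\ e$ and $x^{-1}\ \rho\ f$ one first upgrades the latter to $x^{-1}\ \rho\ fx^{-1}$ and then chains $x\ \rho\ exx^{-1}\ \rho\ xfx^{-1}e\ \rho\ xf\ \rho\ xx^{-1}$ --- precisely the computation appearing in the paper's proof of \cref{lem1}(iii). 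Your appeal to condition $(iii)$ at this point is a misdirection: the hypothesis $b^{-1}a\in K$ is already one of the four defining clauses of $\rho_{(K,\tau)}$, so the inverse of $a^{-1}b$ is in the kernel for free, and invoking $(iii)$ merely produces a lower bound $y\le a^{-1}b$ to which the same two-sided-kernel lemma must still be applied. Supply that lemma explicitly and your proof is complete; without it the argument has a hole at exactly the step you correctly identified as carrying the content of the theorem.
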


Hence we may realise \lcs/ as a subset of $\mathfrak{K}(S)\times \mathfrak{C}(E),$ as previously mentioned the natural ordering of pairs $(\ker/(\rho),\tra/(\rho))$ (inclusion in both coordinates) coincides with the ordering on left congruences.  Explicitly if $\rho_1,\rho_2$ are left congruences on $S$, then
$$ \rho_1\subseteq \rho_2 \iff \tra/(\rho_1)\subseteq \tra/(\rho_2) \text{ and } \ker/(\rho_1)\subseteq \ker/(\rho_2).$$
Consequently,
$$ \rho_1 = \rho_2 \iff \tra/(\rho_1) = \tra/(\rho_2) \text{ and } \ker/(\rho_1) = \ker/(\rho_2).$$

\begin{deff}
The trace map is the function 
$$\tra/:\mathfrak{LC}(S)\rightarrow \mathfrak{C}(E);\ \rho\mapsto\tra/(\rho),$$
and the kernel map is the function:
$$\ink/:\mathfrak{LC}(S)\rightarrow \mathfrak{K}(S);\ \rho\mapsto\ink/(\rho). $$
If $\rho$ is a left congruence on $S$ then $[\rho]_{\tra/}=\{\rho^\prime\in\mathfrak{LC}(S)\ |\ \tra/(\rho^\prime)=\tau\}$ is the trace class of $\rho.$  Also $[\rho]_{\ink/}=\{\rho^\prime\in\mathfrak{LC}(S)\ |\ \ink/(\rho^\prime)=\ink/(\rho) \}$ is the kernel class of $\rho.$
\end{deff}

It is a straightforward observation from the preceding remarks that the trace and kernel maps are order preserving functions.  It is known that the trace classes are intervals in the lattice of left congruences, however in general kernel classes are less well behaved.  In \cite{PetrichRankin} it is observed that in general they are not intervals.

\begin{deff}\label{deff2}
Let $\tau$ be a congruence on $E$.  Define $N_L(\tau),$ the {\em left-normaliser} of $\tau$ by:
$$N_L(\tau) = \{ a\in S\ |\ e\ \tau\ f \implies a^{-1}ea\ \tau\ a^{-1}fa \}.$$
and $N_R(\tau),$ the {\em right normaliser}:
$$N_R(\tau) = \{ a\in S\ |\ e\ \tau\ f \implies aea^{-1}\ \tau\ afa^{-1} \}.$$
The normaliser $N(\tau)$ is then defined as:
$$N(\tau) = N_R(\tau) \cap N_L(\tau) = \{ a\in S\ |\ e\ \tau\ f \implies aea^{-1}\ \tau\ afa^{-1} \text{ and } a^{-1}ea\ \tau\ a^{-1}fa \}.$$
\end{deff}

It is important to note that $N(\tau)$ is a full inverse subsemigroup, indeed it is the largest inverse subsemigroup contained in $N_L(\tau)$ (or, indeed, in $N_R(\tau)$) in the sense that
$$N(\tau)=\{ a\in N_L(\tau)\ |\ a^{-1}\in N_L(\tau)\}.$$

\begin{theorem}[\cite{Meakin} 3.1, \cite{PetrichRankin} 4.1 \& 4.2]\label{M1}
Let $\tau$ be a congruence on $E$.  Then there exists a left congruence $\rho$ on $S$ with $\tra/(\rho)=\tau$.  Moreover we can define relations $\nu_\tau$, $\mu_\tau$ on $S$: 
\begin{gather*}
\nu_\tau = \{ (a,b)\ |\  a^{-1}a\ \tau\ b^{-1}b, \text{ and }\exists e\in E \text{ such that } e\ \tau\ a^{-1}a \text{ and } ae=be\},\\
\mu_\tau = \{ (a,b)\ |\  a^{-1}bb^{-1}a\ \tau\ a^{-1}a,\ b^{-1}aa^{-1}b\ \tau\ b^{-1}b\ \text{ and } a^{-1}b,\ b^{-1}a\in N_L(\tau) \},
\end{gather*}
and $\nu_\tau,\mu_\tau$ are left congruences on $S$ with trace $\tau$.  Also $\nu_\tau$ is the minimum left congruence with trace $\tau$ and $\mu_\tau$ is the maximum left congruence with trace $\tau.$
\end{theorem}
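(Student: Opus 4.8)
The plan is to show that each of $\nu_\tau$ and $\mu_\tau$ is a left congruence with trace $\tau$; the existence assertion then comes for free, since either relation witnesses it, and the minimality and maximality claims are checked against an arbitrary left congruence $\rho$ with $\tra/(\rho)=\tau$.

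For $\nu_\tau$ I would first dispose of the equivalence-relation axioms: reflexivity via the witness $e=a^{-1}a$, symmetry by keeping the same $e$, and transitivity by taking $g=ef$ when $ae=be$ and $bf=cf$ are the two witnesses. Before checking left-compatibility I would reduce to the case $e\leq a^{-1}a$, replacing $e$ by $ea^{-1}a$, which still satisfies $e\ \tau\ a^{-1}a$ and $ae=be$. Given $a\ \nu_\tau\ b$ with such a witness and any $c\in S$, I propose the witness $g=e(ca)^{-1}(ca)$ for $ca\ \nu_\tau\ cb$: it is idempotent, it satisfies $g\ \tau\ (ca)^{-1}(ca)$ since $e\ \tau\ a^{-1}a$ and $(ca)^{-1}(ca)\leq a^{-1}a$, and the key identity $cag=cbg$ follows from $cae=cbe$. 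The one subtle point is the remaining condition $(ca)^{-1}(ca)\ \tau\ (cb)^{-1}(cb)$; here I would use that $ae=be$ forces $ea^{-1}=eb^{-1}$, whence $e(ca)^{-1}(ca)e=e(cb)^{-1}(cb)e$, and combine this with $(ca)^{-1}(ca)\ \tau\ (ca)^{-1}(ca)e$. That $\tra/(\nu_\tau)=\tau$ is immediate on idempotents (use the witness $ef$), and minimality is the cleanest part: if $\tra/(\rho)=\tau$ and $a\ \nu_\tau\ b$ via $e$, then $e\ \rho\ a^{-1}a$ because $\rho$ and $\tau$ agree on $E$, so $ae\ \rho\ a$ and $be\ \rho\ b$ by left-compatibility of $\rho$, and $ae=be$ gives $a\ \rho\ b$.

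For $\mu_\tau$ I would note that its defining condition is exactly the one defining $\rho_{(K,\tau)}$ in \Cref{PR1} with $K=N_L(\tau)$, and that $N_L(\tau)$ is a full subsemigroup satisfying conditions (i) and (ii) of that theorem: (ii) is the very definition of $N_L(\tau)$, and (i) follows from a short computation showing that $b\leq a$ with $b^{-1}b\ \tau\ a^{-1}a$ yields $b^{-1}gb\ \tau\ a^{-1}ga$ for idempotents $g$. I would not invoke the sufficiency half of \Cref{PR1}, since condition (iii) may fail for $N_L(\tau)$, but instead verify directly that $\mu_\tau$ is a left congruence. Reflexivity and symmetry are immediate. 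Left-compatibility rests on the factorisation $(ca)^{-1}(cb)=(a^{-1}c^{-1}ca)(a^{-1}b)$, which exhibits $(ca)^{-1}(cb)$ as a product of an idempotent with $a^{-1}b\in N_L(\tau)$ and so places it in $N_L(\tau)$; the two trace conditions then follow by writing $(ca)^{-1}(cb)(cb)^{-1}(ca)=w(uu^{-1})w$ with $w=(ca)^{-1}(ca)$ and $u=a^{-1}b$, and using $uu^{-1}\ \tau\ a^{-1}a$ together with $w\leq a^{-1}a$.

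The main obstacle is transitivity of $\mu_\tau$, and specifically the kernel-membership it demands. Suppose $a\ \mu_\tau\ b$ and $b\ \mu_\tau\ c$, and put $u=a^{-1}b$, $v=b^{-1}c\in N_L(\tau)$. To obtain $a^{-1}c\in N_L(\tau)$ I would apply condition (i) to $uv=a^{-1}bb^{-1}c\in N_L(\tau)$: a direct idempotent calculation gives $uv\leq a^{-1}c$ with $(uv)^{-1}(uv)=c^{-1}aa^{-1}bb^{-1}c$, and multiplying the trace identity $c^{-1}bb^{-1}c\ \tau\ c^{-1}c$ (a defining condition for $b\ \mu_\tau\ c$) by the idempotent $c^{-1}aa^{-1}c$ and simplifying yields $(uv)^{-1}(uv)\ \tau\ (a^{-1}c)^{-1}(a^{-1}c)$; condition (i) then delivers $a^{-1}c\in N_L(\tau)$, with $c^{-1}a\in N_L(\tau)$ following symmetrically. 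The trace conditions for $a\ \mu_\tau\ c$ I would get by a semilattice sandwiching argument: using $b^{-1}a\in N_L(\tau)$ to transport $b^{-1}cc^{-1}b\ \tau\ b^{-1}b$ to $a^{-1}bb^{-1}cc^{-1}bb^{-1}a\ \tau\ a^{-1}a$, and then observing that $a^{-1}bb^{-1}cc^{-1}bb^{-1}a\leq a^{-1}cc^{-1}a\leq a^{-1}a$ forces $a^{-1}cc^{-1}a\ \tau\ a^{-1}a$, since an idempotent trapped between two $\tau$-equivalent idempotents is $\tau$-equivalent to them. Finally $\tra/(\mu_\tau)=\tau$ is checked on idempotents, and maximality is easy: if $\tra/(\rho)=\tau$ then the converse part of \Cref{PR1} shows $\ker/(\rho)$ satisfies condition (ii), so $\ker/(\rho)\subseteq N_L(\tau)$; hence $a\ \rho\ b$ forces $a^{-1}b,b^{-1}a\in\ker/(\rho)\subseteq N_L(\tau)$ alongside the trace conditions, i.e. $a\ \mu_\tau\ b$.
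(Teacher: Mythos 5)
Your proposal is correct, but there is nothing in the paper to compare it against line by line: \cref{M1} is imported from Meakin and from Petrich and Rankin and is stated without proof, so you have in effect supplied the missing argument. Your verification is sound throughout. The reduction to a witness $e\leq a^{-1}a$ for $\nu_\tau$, the witness $g=e(ca)^{-1}(ca)$ for left compatibility, and the derivation of $(ca)^{-1}(ca)\ \tau\ (cb)^{-1}(cb)$ from $ea^{-1}=eb^{-1}$ all check out; so does the observation that $\mu_\tau$ is $\rho_{(K,\tau)}$ with $K=N_L(\tau)$, together with your decision not to invoke the sufficiency half of \cref{PR1} (condition $(iii)$ can indeed fail for $N_L(\tau)$) and instead to verify transitivity directly via the order-closure property of $N_L(\tau)$ and the sandwiching of $a^{-1}cc^{-1}a$ between $\tau$-equivalent idempotents. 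Two small remarks. First, for maximality you route through condition $(ii)$ of \cref{PR1}; the paper proves the containment $\ker/(\rho)\subseteq N_L(\tau)$ by a direct one-line computation immediately after the statement of \cref{M1}, which you could cite instead. Second, your closure computation for $N_L(\tau)$ under the natural partial order (condition $(i)$) is essentially the same manipulation the paper performs inside the proof of \cref{sat lem}, so it is consistent with the toolkit used elsewhere in the text.
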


If we regard $\tau$ as a binary relation on $S,$ then $\langle \tau\rangle$ is the minimum left congruence on $S$ containing $\tau.$   Thus we have a closed form form for $\langle \tau \rangle=\nu_\tau,$ and moreover we have that $\langle\tau\rangle\cap(E\times E)=\tau.$  In particular if $\tau$ is finitely generated then so is $\nu.$  We will continue to use $\nu_\tau$ to denote the minimum left congruence with trace $\tau,$ and will drop the subscript where $\tau$ is clear. 

It is worth noting that for a left congruence $\rho$ on $S$ with trace $\tau,$ $\ker/(\rho)\subseteq N_L(\tau).$  Indeed, suppose that $a\in S,$ $e,f,g\in E,$ with $e\ \tau\ f$ and $a\ \rho\ g;$ then:
$$a^{-1}ea\ \rho\ a^{-1}eg=a^{-1}ge\ \rho\ a^{-1}gf=a^{-1}fg\ \rho\ a^{-1}ga.$$

In \cite{Meakin} Meakin discusses idempotent separating left congruences on $S$ (left congruences with trivial trace), and shows that in this case $\mu$ corresponds to Green's relation $\mathcal{R}.$  As is well known for inverse semigroups this relation is:
$$\mathcal{R}=\{(a,b)\ |\ aa^{-1}=bb^{-1} \}$$
Notice that when $\rho\subseteq \mathcal{R},$ if $a\ \rho\ e$ then $e=aa^{-1},$ whence $a\ \rho\ aa^{-1}$ and thus $a^{-1}a\ \rho\ a^{-1},$ and so $a^{-1}\in \ker/(\rho).$  Hence if $\tra/(\rho)$ is trivial then $\ker/(\rho)$ is an inverse semigroup.  

\begin{theorem}[\cite{Meakin} 4.2]\label{M2}
The restriction of the kernel map to the set of idempotent separating left congruences on $S$ is a lattice isomorphism onto $\mathfrak{V}(S).$
\end{theorem}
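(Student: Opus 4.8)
The plan is to verify that the restriction $\Phi$ of the kernel map $\ker/$ to the idempotent separating left congruences is a bijection onto $\mathfrak{V}(S)$ which, together with its inverse, preserves inclusion. Since $\mathfrak{V}(S)$ is a lattice, an order isomorphism onto it is automatically a lattice isomorphism, so establishing the order isomorphism will suffice.

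That $\Phi$ is well defined is precisely the observation immediately preceding the statement: if $\tra/(\rho)$ is trivial then the full subsemigroup $\ker/(\rho)$ is closed under inversion, so $\ker/(\rho)\in\mathfrak{V}(S)$. For injectivity I would use that every idempotent separating left congruence has the same trace, namely the identity congruence $\iota_E$ on $E$. Since \Cref{PR1} recovers a left congruence from its kernel--trace pair via $\rho=\rho_{(\ker/(\rho),\tra/(\rho))}$, two idempotent separating congruences with a common kernel must coincide.

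The real content is surjectivity. Given $K\in\mathfrak{V}(S)$ I would apply \Cref{PR1} to the pair $(K,\iota_E)$ and check conditions (i)--(iii). Triviality of the trace makes these nearly immediate: $e\ \iota_E\ f$ forces $e=f$, so (ii) holds, and in (i) the hypothesis $a^{-1}a\ \iota_E\ b^{-1}b$ reads $a^{-1}a=b^{-1}b$, whence $a\geq b$ in the natural partial order gives $b=ab^{-1}b=aa^{-1}a=a\in K$. For (iii) I would take $b=a$, the point being that $a\in K$ and $K$ inverse yield $a^{-1}\in K$; this is the sole place where the inverse-closure of $K$ is genuinely used. \Cref{PR1} then produces a left congruence $\rho_{(K,\iota_E)}$ with $\ker/(\rho_{(K,\iota_E)})=K$ and trivial trace, i.e. an idempotent separating preimage of $K$.

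It remains to identify $\Phi$ as an order isomorphism. The Petrich--Rankin ordering gives $\rho_1\subseteq\rho_2$ iff $\tra/(\rho_1)\subseteq\tra/(\rho_2)$ and $\ker/(\rho_1)\subseteq\ker/(\rho_2)$. Restricted to the trace class of $\iota_E$ --- which is an interval in $\mathfrak{LC}(S)$, hence a sublattice --- the trace comparison is automatic, so $\rho_1\subseteq\rho_2$ iff $\ker/(\rho_1)\subseteq\ker/(\rho_2)$. Thus $\Phi$ and $\Phi^{-1}$ both preserve inclusion, making $\Phi$ an order isomorphism onto $\mathfrak{V}(S)$ and therefore a lattice isomorphism. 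I expect the only step needing genuine care to be the surjectivity check, particularly condition (i), where one must confirm that $a\geq b$ together with $a^{-1}a=b^{-1}b$ collapses to $a=b$.
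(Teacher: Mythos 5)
Your argument is correct, but note that the paper offers no proof of this statement to compare against: it is imported verbatim as Meakin's Theorem~4.2. What you have written is a self-contained derivation from the Petrich--Rankin machinery (\cref{PR1}) that the paper does set up, and every step checks out. Well-definedness is the remark preceding the statement; injectivity follows from $\rho=\rho_{(\ker(\rho),\operatorname{trace}(\rho))}$ since all congruences in question share the trace $\iota_E$; for surjectivity your verification of (i)--(iii) for the pair $(K,\iota_E)$ is right, in particular the collapse $b=ab^{-1}b=aa^{-1}a=a$ in (i) and the choice $b=a$ in (iii), which is indeed the only place inverse-closure of $K$ is used. The passage from order isomorphism to lattice isomorphism is legitimate because the idempotent separating left congruences form the interval $[\iota,\mathcal{R}]$, so their meets and joins agree with those of $\mathfrak{LC}(S)$. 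The one cosmetic caveat is that your appeal to ``trace classes are intervals'' is itself a fact the paper cites rather than proves (\cref{M1} supplies the maximum $\mu_\tau=\mathcal{R}$ and minimum $\nu_\tau=\iota$ for trivial trace), but for the trivial trace it also follows directly from the order criterion you quote. Whether Meakin's original proof proceeds this way cannot be judged from this paper, but as a proof the proposal stands.
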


Hence for each full inverse subsemigroup $T$ of $S$ there is a unique idempotent separating left congruence with inverse kernel $T.$  We will use $\chi_T$ to denote this left congruence, and will drop the subscript when $T$ is unambiguous.

\section{The Inverse Kernel}

As explained in the previous section it is possible to reconstruct a left congruence from a suitable kernel and trace.  We now show that there is an inverse subsemigroup that we name the inverse kernel contained in the kernel, from which we can also recover the original left congruence.  In \cite{PetrichRankin} it is noted that the primary issue with the kernel is that given $a\in K$ it is not possible to determine to which idempotent $a$ is related.  To this end we make the following definition.

\begin{deff}\label{deff3}
For a left congruence $\rho$ on $S$ the \textit{inverse kernel} of $\rho$ is the set 
$$\ink/(\rho)=\{ a\ |\ a\ \rho\ aa^{-1}\}.$$
\end{deff}

We immediately note that the inverse kernel of a left congruence is contained in the kernel of the left congruence.

\begin{prop}\label{lem1}
Let $\rho$ be a left congruence on $S$ and let $\tau=\tra/(\rho),$ and $K=\ker/(\rho).$  Then the following hold:
\begin{enumerate}[{label=(\roman*)}]
\item $\ink/(\rho)$ is a full inverse subsemigroup of $S;$
\item $\ink/(\rho)=\ker/(\rho \cap \mathcal{R});$
\item $\ink/(\rho)=\{ a\in K\ |\ a^{-1}\in K\};$ \label{ink1}
\item $\ink/(\rho)=K \cap N(\tau).$ \label{lem4}
\end{enumerate}
\end{prop}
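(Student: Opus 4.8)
The plan is to prove \ref{lem1} in the order (ii), (i), and then (iii) and (iv) together, reducing the two harder statements to a single implication. First I would settle (ii). Since $\mathcal{R}$ is a left congruence, so is $\rho\cap\mathcal{R}$, and its kernel is easy to read off: an element $a$ lies in $\ker/(\rho\cap\mathcal{R})$ iff $a\ (\rho\cap\mathcal{R})\ g$ for some $g\in E$, and the $\mathcal{R}$--component forces $g=aa^{-1}$, so this says precisely $a\ \rho\ aa^{-1}$, i.e. $a\in\ink/(\rho)$; both inclusions are then immediate. Part (i) now follows for free: because $\mathcal{R}|_E=\iota$ on $E$, the left congruence $\rho\cap\mathcal{R}$ has trivial trace, so by the remark preceding Theorem~\ref{M2} its kernel is an inverse subsemigroup, and kernels are always full; by (ii) this kernel is $\ink/(\rho)$. (One could instead check closure of $\ink/(\rho)$ under product and inverse directly, but routing through (ii) is cleaner.)

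Next I would record the easy inclusions of (iii) and (iv). From $a\ \rho\ aa^{-1}$ we get $a\in K$, and left--multiplying by $a^{-1}$ gives $a^{-1}\ \rho\ a^{-1}a$, so $a^{-1}\in K$ as well; hence $\ink/(\rho)\subseteq\{a\in K\mid a^{-1}\in K\}$. Using $\ker/(\rho)\subseteq N_L(\tau)$ (noted after Theorem~\ref{M1}), both $a,a^{-1}\in N_L(\tau)$, so $a\in N(\tau)$ and $\ink/(\rho)\subseteq K\cap N(\tau)$. For the reverse directions I would observe that everything collapses to one statement: since $a^{-1}\in K\subseteq N_L(\tau)$ is equivalent to $a\in N_R(\tau)$, and $a\in K\subseteq N_L(\tau)$, we have $\{a\in K\mid a^{-1}\in K\}\subseteq K\cap N(\tau)\subseteq K\cap N_R(\tau)$. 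Thus it suffices to prove the single implication
$$a\in K\cap N_R(\tau)\ \Longrightarrow\ a\in\ink/(\rho).$$
Combined with the easy inclusions this yields $\ink/(\rho)\subseteq\{a\in K\mid a^{-1}\in K\}\subseteq K\cap N(\tau)\subseteq K\cap N_R(\tau)\subseteq\ink/(\rho)$, forcing all four sets to coincide and proving (iii) and (iv) simultaneously.

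It remains to sketch the crux implication, where the real work lies. Given $a\in K$, choose $e\in E$ with $a\ \rho\ e$; the goal is $e\ \tau\ aa^{-1}$, for then $a\ \rho\ e\ \rho\ aa^{-1}$. Pure left--congruence manipulation (no normaliser) yields two trace relations: left--multiplying $a\ \rho\ e$ by $aa^{-1}$ gives $a\ \rho\ (aa^{-1})e$, hence $e\ \tau\ (aa^{-1})e$; and a short computation starting from $a^{-1}a\ \rho\ a^{-1}e$ (left--multiply by $a^{-1}ea$) gives $a^{-1}ea\ \tau\ a^{-1}a$. Now the hypothesis $a\in N_R(\tau)$ enters: conjugating $a^{-1}a\ \tau\ a^{-1}ea$ by $a(\,\cdot\,)a^{-1}$ produces $aa^{-1}\ \tau\ (aa^{-1})e$, and combining this with $e\ \tau\ (aa^{-1})e$ gives $e\ \tau\ aa^{-1}$, as required.

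The main obstacle is precisely this last step. One must conjugate the relation $a^{-1}a\ \tau\ a^{-1}ea$ rather than the more obvious $e\ \tau\ (aa^{-1})e$, since conjugating the latter by $a$ only reproduces a trivial identity (the idempotent $(aa^{-1})e$ is absorbed under conjugation by $a$). Arranging the second trace relation into exactly the shape that conjugation converts into the missing half $aa^{-1}\ \tau\ (aa^{-1})e$ is the one genuinely non--mechanical move; everything else is bookkeeping with the natural partial order and the commutativity of idempotents.
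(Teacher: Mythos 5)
Your proposal is correct, but it is organised genuinely differently from the paper's proof. The paper proves (i) by direct computation (including the product closure $ab\ \rho\ (ab)(ab)^{-1}$), proves the reverse inclusion of (iii) by an explicit chain of $\rho$-relations starting from $a=aa^{-1}a$, and only then proves (iv); you instead get (i) from (ii) via the facts, already recorded in the paper, that kernels are full subsemigroups and that idempotent separating left congruences have inverse kernels, and you collapse (iii) and (iv) into the single chain $\ink/(\rho)\subseteq\{a\in K\mid a^{-1}\in K\}\subseteq K\cap N(\tau)\subseteq K\cap N_R(\tau)\subseteq\ink/(\rho)$, so that only the last inclusion requires work. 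That last inclusion is where the two arguments meet: both extract $a^{-1}a\ \tau\ a^{-1}ea$ from $a\ \rho\ e$ and conjugate by $a$, but the paper (using $a\in N(\tau)$) deduces $a^{-1}\in\ink/(\rho)$ and then invokes closure under inverses, whereas you (using only $a\in N_R(\tau)$) reach $e\ \tau\ aa^{-1}$ and hence $a\ \rho\ aa^{-1}$ directly. Your route buys a small economy --- the paper's separate computation for (iii) disappears --- plus the marginally stronger byproduct $K\cap N_R(\tau)=K\cap N(\tau)=\ink/(\rho)$; the paper's version keeps each part self-contained. The individual steps check out (in particular $(a^{-1}ea)(a^{-1}e)=a^{-1}e$, so your ``short computation'' needs one further transitivity with $a^{-1}a\ \rho\ a^{-1}e$ to land on $a^{-1}ea\ \tau\ a^{-1}a$, and $a(a^{-1}ea)a^{-1}=(aa^{-1})e$ as claimed).
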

\begin{proof}
For the first claim we note that as $e\ \rho\ e=ee^{-1}$ we have that $E\subseteq\ink/(\rho),$ so $\ink/(\rho)$ is full.  Also if $a\ \rho\ aa^{-1}$ then by multiplying on the left by $a^{-1}$ we have $ a^{-1}a\ \rho\ a^{-1},$ hence $\ink/(\rho)$ is inverse.  Suppose that $a,b\in \ink/(\rho),$ so $a\ \rho\ aa^{-1},$ and $b\ \rho\ bb^{-1}.$  Then
$$ab\ \rho\ abb^{-1} = abb^{-1}a^{-1}a\ \rho\ abb^{-1}a^{-1}aa^{-1}=abb^{-1}a^{-1} = (ab)(ab)^{-1}.$$
Thus $\ink/(\rho)$ is a full inverse subsemigroup of $S.$

The second part is immediate, as if $e\in E$ and $a\ (\rho \cap \mathcal{R})\ e$ then $e=aa^{-1}$.

For the third part we have 
$$a\ \rho\ aa^{-1} \implies a^{-1}a\ \rho\ a^{-1} \implies a^{-1}\in K,$$
therefore $\{ a\ |\ a\ \rho\ aa^{-1} \} \subseteq \{ a\in K\ |\ a^{-1}\in K\}.$  To see the reverse inclusion we note that if $a,a^{-1}\in K$ then there are $e,f\in E$ with $a\ \rho\ e$ and $a^{-1}\ \rho\ f,$ also if $a^{-1}\ \rho\ f$ then $fa^{-1}\ \rho\ f\ \rho\ a^{-1}.$
Then we observe:
$$a=aa^{-1}a\ \rho\ aa^{-1}e = eaa^{-1}\ \rho\ eafa^{-1} = afa^{-1}e\ \rho\ afa^{-1}a = af\ \rho\ aa^{-1}.$$

For part $(iv)$ we recall that since $\tra/(\rho)=\tau$ we have $\ink/(\rho)\subseteq \ker/(\rho)\subseteq N_L(\tau).$  Since $N(\tau)=\{a\in N_L(\tau)\ |\ a^{-1}\in N_L(\tau)\}$ part $(iii)$ gives $\ink/(\rho)\subseteq N(\tau)$ and hence $\ink/(\rho)\subseteq{N(\tau)}\cap{K}$.  

For the reverse inclusion suppose $a\in N(\tau)\cap K.$  As $a\in K$ there is $e\in E$ with $a\ \rho\ e.$  Left multiplying this relation by $e$ gives $ea\ \rho\ e\ \rho\ a.$  Left multiplying again, this time by $a^{-1}$ gives $a^{-1}ea\ \rho\ a^{-1}a,$ and as these are both idempotent we have $a^{-1}a\ \tau\ a^{-1}ea.$  Then we may conjugate this relation by $a\in N(\tau)$ to obtain:
$$aa^{-1}= a(a^{-1}a)a^{-1}\ \tau\ a(a^{-1}ea)a^{-1} = aa^{-1}e.$$  
We then observe that
$$a^{-1}=a^{-1}(aa^{-1})\ \rho\ a^{-1}(aa^{-1}e)=a^{-1}e.$$
However, as $a\ \rho\ e$ we also have $a^{-1}a\ \rho\ a^{-1}e.$  Thus we have that $a^{-1}\ \rho\ a^{-1}a$ so $a^{-1}\in \ink/(\rho).$
\end{proof}

From \cref{lem1}$(iii)$ we observe that when the kernel of a left congruence is inverse then the inverse kernel is equal to the kernel.  For instance for two sided congruences and idempotent separating left congruences the notion of kernel and inverse kernel coincide.  For certain classes of inverse semigroups the kernel of a left congruence is always an inverse subsemigroup, including Clifford semigroups (see \cite{PetrichRankin}) and Brandt semigroups (see \cite{PRBrandt}).  The following description of one-sided congruences on inverse semigroups coincides with the kernel-trace description from \cite{PetrichRankin} on these classes of semigroups.

It has been previously mentioned that the normaliser of a congruence on $E$ is the maximum inverse subsemigroup contained in the left normaliser.  From \cref{lem1}$(iii)$ we have that the inverse kernel of a left congruence is the largest inverse subsemigroup contained in the kernel.  Applying this to the maximum left congruence with a fixed trace we have the following corollary.

\begin{corr} \label{inkprop1}\label{lem2}
Let $\tau$ be a congruence on $E$, and let $\mu$ be the maximum left congruence with trace $\tau.$  Then 
$$N(\tau)=\ink/(\mu)= \ker/(\mu\cap\mathcal{R})$$ 
\end{corr}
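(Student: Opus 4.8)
The plan is to handle the second equality at once and then reduce the first to a single nontrivial inclusion. Since $\tra/(\mu)=\tau$, the identity $\ink/(\mu)=\ker/(\mu\cap\mathcal{R})$ is simply \cref{lem1}$(ii)$ read off for the left congruence $\rho=\mu$, so no further work is needed there. It therefore remains to prove $N(\tau)=\ink/(\mu)$, which I would establish by two inclusions.

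One inclusion is immediate: applying \cref{lem1}$(iv)$ to $\mu$ gives $\ink/(\mu)=\ker/(\mu)\cap N(\tau)$, whence in particular $\ink/(\mu)\subseteq N(\tau)$. The content of the corollary is thus the reverse inclusion $N(\tau)\subseteq\ink/(\mu)$, equivalently $N(\tau)\subseteq\ker/(\mu)$.

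For this I would take $a\in N(\tau)$ and show $a\ \mu\ aa^{-1}$ directly, using the closed form $\mu=\mu_\tau$ from Theorem~\ref{M1}. Taking $b=aa^{-1}$ as the witness, the two trace conditions collapse to identities, since $a^{-1}bb^{-1}a=a^{-1}a$ and $b^{-1}aa^{-1}b=aa^{-1}=b^{-1}b$; while the two membership conditions become $a^{-1}b=a^{-1}\in N_L(\tau)$ and $b^{-1}a=a\in N_L(\tau)$. These last two hold precisely because $N(\tau)$ is a full inverse subsemigroup contained in $N_L(\tau)$, so $a\in N(\tau)$ forces both $a$ and $a^{-1}$ into $N_L(\tau)$. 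Hence $a\ \mu\ aa^{-1}$, that is, $a\in\ink/(\mu)$.

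The step I expect to be the crux is not any calculation but the choice of witness together with the recognition that $a^{-1}\in N_L(\tau)$ is exactly what is needed: this is precisely the extra information carried by membership in $N(\tau)$ rather than in $N_L(\tau)$ alone, and it is what forces the maximal left congruence $\mu$ to relate $a$ to the idempotent $aa^{-1}$. Once this is in place the two inclusions combine to give $N(\tau)=\ink/(\mu)=\ker/(\mu\cap\mathcal{R})$.
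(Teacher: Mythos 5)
Your proposal is correct and follows essentially the same route as the paper: the inclusion $\ink/(\mu)\subseteq N(\tau)$ comes from \cref{lem1}$(iv)$, and $N(\tau)\subseteq\ink/(\mu)$ is read off from the closed form of $\mu_\tau$ in \cref{M1}. The only difference is that you explicitly verify the witness $b=aa^{-1}$ satisfies the defining conditions of $\mu_\tau$, a step the paper leaves as an assertion.
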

\begin{proof}
From the description of $\mu$ from \cref{M1} we get that if $a\in N(\tau)$ then $a\ \mu\ aa^{-1}.$  Hence $N(\tau)\subseteq \ink/(\mu).$  However \cref{lem1}\ref{lem4} gives that $\ink/(\mu)\subseteq N(\tau).$ 
\end{proof}

Furthermore from \cref{lem1} we observe that the kernel determines the inverse kernel so the set of left congruences with the same inverse kernel is a union of kernel classes.  We make the following natural definition.

\begin{deff}
The inverse kernel map is the function \lcs/$\rightarrow\mathfrak{V}(S);$ $\rho\mapsto\ink/(\rho).$  For $\rho$ a left congruence the inverse kernel class of $\rho$ is 
$$[\rho]_{\ink/}=\{\kappa\in \mathfrak{LC}(S)\ |\ \ink/(\kappa)=\ink/(\rho)\}.$$
\end{deff}

We recall that for each full inverse subsemigroup there is a unique idempotent separating left congruence for which this subsemigroup is the kernel.  Since the inverse kernel is a full inverse subsemigroup, for each left congruence $\rho$ there is a unique idempotent separating left congruence with the same inverse kernel.  In fact, from \cref{lem1} we know that this is $\rho\cap\mathcal{R},$ since $\rho\cap\mathcal{R}$ is idempotent separating and $\ink/(\rho)=\ker/(\rho\cap\mathcal{R}).$  Later, in \cref{mapsection} we shall discuss this map and it's properties in greater detail.

We also note that if $\rho_1\subseteq \rho_2$ are left congruences then certainly $\ink/(\rho_1)\subseteq \ink/(\rho_2),$ so the inverse kernel map is order preserving.  Suppose $\rho$ is a left congruence and $\chi=\rho\cap\mathcal{R}.$  Then certainly $\ker/(\chi)\subseteq\ker/(\rho)$ and as $\tra/(\chi)$ is trivial we also have $\tra/(\chi)\subseteq\tra/(\rho).$  But then the kernel trace characterisation of left congruences gives that $\chi\subseteq\rho.$  Hence the idempotent separating left congruence is the minimum element in the set of left congruences that share the same inverse kernel. 

In this section we shall show how a left congruence is characterised by its trace and inverse kernel.  

\begin{deff}\label{lcpdeff}
Let $\tau$ be a congruence on $E,$ and let $T\subseteq S$ be a full inverse subsemigroup satisfying the following conditions:
\begin{enumerate}[label={(D\arabic*)}]
\item $T\subseteq N(\tau);$
\item for $x\in S$, if there $e,f\in E$ such that $x^{-1}x\ \tau\ e,\ xx^{-1}\ \tau\ f$ and $xe,fx\in T$ then we have $x\in T.$ \label{lcpdeff2}
\end{enumerate}
Then we say that $(\tau,T)$ is an \textit{inverse congruence pair} for $S.$ 
\end{deff}

For an inverse congruence pair $(\tau,T),$ define the relation:
$$\rho_{(\tau,T)}= \{ (x,y)\ |\ x^{-1}y\in T,\ x^{-1}yy^{-1}x\ \tau\ x^{-1}x,\ y^{-1}xx^{-1}y\ \tau\ y^{-1}y \}.$$

\begin{prop}\label{cor4}
Let $(\tau,T)$ be a inverse congruence pair for $S,$ then $\rho_{(\tau,T)}$ is a left congruence on $S.$  Moreover $\ink/(\rho)=T,$ and $\tra/(\rho)=\tau.$
\end{prop}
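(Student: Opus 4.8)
The plan is to exhibit $\rho_{(\tau,T)}$ as one of the Petrich--Rankin left congruences of \cref{PR1}, and then read off its inverse kernel and trace from results already available. To this end, define
$$K=\{a\in S\ |\ \text{there exists } b\in T \text{ with } a\geq b \text{ and } a^{-1}a\ \tau\ b^{-1}b\}.$$
The proof then has three tasks: check that $(K,\tau)$ satisfies the hypotheses of \cref{PR1} (that $K$ is a full subsemigroup obeying (i), (ii), (iii)); identify $\rho_{(K,\tau)}$ with $\rho_{(\tau,T)}$; and compute $\ink/(\rho_{(K,\tau)})$, showing it equals $T$.

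For the first task, fullness is immediate, since for $e\in E$ we may take the witness $b=e\in T$ ($T$ is full). Condition (i) follows from transitivity of $\geq$ and of $\tau$: a witness for $b\in K$ with $a\geq b$, $a^{-1}a\ \tau\ b^{-1}b$ is also a witness for $a$. Condition (iii) is immediate, as the witness $b\in T$ for $a\in K$ satisfies $b^{-1}\in T\subseteq K$ because $T$ is inverse. The content lies in condition (ii), which is where (D1) enters. Given $a\in K$ with witness $b\in T\subseteq N(\tau)\subseteq N_L(\tau)$ and $e\ \tau\ f$, one has $b^{-1}eb\ \tau\ b^{-1}fb$; writing $b=a(b^{-1}b)$ and $g=b^{-1}b$ this reads $g(a^{-1}ea)g\ \tau\ g(a^{-1}fa)g$. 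Since $a^{-1}ea\leq a^{-1}a$ and $g\ \tau\ a^{-1}a$, multiplying $a^{-1}a\ \tau\ g$ by $a^{-1}ea$ yields $a^{-1}ea\ \tau\ g(a^{-1}ea)g$, and likewise for $f$; chaining the three relations gives $a^{-1}ea\ \tau\ a^{-1}fa$. The same calculation shows $K\subseteq N_L(\tau)$, and, reused by conjugating a witness relation by a second element of $K$, shows $K$ is closed under products.

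By \cref{PR1}, $\rho_{(K,\tau)}$ is then a left congruence with $\ker/(\rho_{(K,\tau)})=K$ and $\tra/(\rho_{(K,\tau)})=\tau$. The defining conditions of $\rho_{(K,\tau)}$ and of $\rho_{(\tau,T)}$ share the same pair of $\tau$-conditions and differ only in requiring $a^{-1}b,b^{-1}a\in K$ versus $a^{-1}b\in T$. As $T$ is inverse, $a^{-1}b\in T$ forces $b^{-1}a=(a^{-1}b)^{-1}\in T\subseteq K$, giving $\rho_{(\tau,T)}\subseteq\rho_{(K,\tau)}$; conversely $a^{-1}b,b^{-1}a\in K$ says $a^{-1}b\in\{w\in K\ |\ w^{-1}\in K\}$, which by \cref{lem1}(iii) is exactly $\ink/(\rho_{(K,\tau)})$. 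Thus both the identification $\rho_{(K,\tau)}=\rho_{(\tau,T)}$ and the inverse-kernel computation reduce to the single equality $\{w\in K\ |\ w^{-1}\in K\}=T$.

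This equality is the main obstacle, and it is precisely where (D2) is needed. The inclusion $T\subseteq\{w\in K\ |\ w^{-1}\in K\}$ is clear, since $T\subseteq K$ and $T$ is inverse. For the reverse inclusion, take $a$ with $a,a^{-1}\in K$, with witnesses $b\in T$ ($a\geq b$, $a^{-1}a\ \tau\ b^{-1}b$) and $c\in T$ ($a^{-1}\geq c$, $aa^{-1}\ \tau\ c^{-1}c$); put $d=c^{-1}\in T$, so $a\geq d$ and $aa^{-1}\ \tau\ dd^{-1}$. Applying (D2) with $e=b^{-1}b$ and $f=dd^{-1}$, the relations $a^{-1}a\ \tau\ e$ and $aa^{-1}\ \tau\ f$ hold by choice, while $ae=ab^{-1}b=b\in T$ and $fa=dd^{-1}a=d\in T$ (using $b=ab^{-1}b$ and $d=dd^{-1}a$ from $a\geq b$ and $a\geq d$); hence $a\in T$. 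Once this equality is established, \cref{PR1}, \cref{lem1}(iii), and $\rho_{(K,\tau)}=\rho_{(\tau,T)}$ together yield that $\rho_{(\tau,T)}$ is a left congruence with $\ink/(\rho_{(\tau,T)})=T$ and $\tra/(\rho_{(\tau,T)})=\tau$.
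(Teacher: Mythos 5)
Your proposal is correct, but it takes a genuinely different route from the paper. The paper proves the proposition from scratch: it verifies directly that $\rho_{(\tau,T)}$ is reflexive, symmetric, left compatible and transitive, with (D1) (conjugation by elements of $T\subseteq N(\tau)$) carrying the $\tau$-conditions through the transitivity argument and (D2) supplying the closure $c^{-1}a\in T$; the trace and inverse kernel are then read off directly. You instead reduce everything to the cited Petrich--Rankin theorem by building the candidate kernel $K$ as the upward $\tau$-closure of $T$ --- which is exactly the set $\bigcup_{t\in T}[t]_{\nu_\tau}$ that the paper only identifies as $\ker/(\rho)$ after the fact --- and verifying conditions (i)--(iii) of \cref{PR1}. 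I checked the details: fullness, (i) and (iii) are as routine as you say; your conjugation computation for (ii) (using $b=ag$ with $g=b^{-1}b\ \tau\ a^{-1}a$ and $a^{-1}ea\leq a^{-1}a$) is sound and does double duty for the product closure of $K$ (with witness $b_1b_2\leq a_1a_2$); and your application of (D2) with $e=b^{-1}b$, $f=dd^{-1}$ correctly yields $\{w\in K\mid w^{-1}\in K\}=T$, after which \cref{lem1}(iii) finishes both the identification $\rho_{(K,\tau)}=\rho_{(\tau,T)}$ and the inverse-kernel computation. What your approach buys is that the hard structural facts (left compatibility, transitivity) are delegated to the external theorem, and the kernel/inverse-kernel relationship becomes explicit; what it costs is a dependence on \cref{PR1} where the paper's argument is self-contained, and a comparable amount of conjugation bookkeeping relocated into the verification of the Petrich--Rankin conditions.
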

\begin{proof}
Let $\rho=\rho_{(\tau,T)}.$ First we show that $\rho$ is a left congruence.  It is immediate that $\rho$ is reflexive and symmetric.  We next show left compatibility.  Suppose that $(a,b)\in \rho.$  Thus $a^{-1}b\in T,$ and writing $a^{-1}c^{-1}ca=f\in E$ we have $a^{-1}c^{-1}c=fa^{-1},$ and
$$(ca)^{-1}(cb)=a^{-1}c^{-1}cb=fa^{-1}b\in T.$$
We note that 
$$(a^{-1}c^{-1}ca)(a^{-1}bb^{-1}a) = a^{-1}c^{-1}cbb^{-1}c^{-1}ca = (ca)^{-1}(cb)(cb^{-1})(ca)$$
Since $a\ \rho\ b$ we have $a^{-1}bb^{-1}a\ \tau\ a^{-1}a,$ thus
$$(ca)^{-1}(cb)(cb^{-1})(ca) = (a^{-1}c^{-1}ca)(a^{-1}bb^{-1}a)\ \tau\ (a^{-1}c^{-1}ca)(a^{-1}a) = (ca)^{-1}(ca)$$
Similarly we obtain $(cb)^{-1}(ca)(ca)^{-1}(cb)\ \tau\ (cb)^{-1}(cb),$ and thus $ca\ \rho\ cb$ and $\rho$ is left compatible. 

We now show that $\rho$ is transitive, to which end suppose that $(a,b),(b,c)\in \rho.$  Thus we have $a^{-1}b,\ b^{-1}c\in T,$ and 
$$a^{-1}bb^{-1}a\ \tau\ a^{-1}a,\ \ b^{-1}aa^{-1}b\ \tau\ b^{-1}b,\ \ b^{-1}cc^{-1}c\ \tau\ b^{-1}b,\ \ c^{-1}bb^{-1}c\ \tau\ c^{-1}c.  $$
We need to show that $c^{-1}a\in T,$ and that $c^{-1}aa^{-1}c\ \tau\ c^{-1}c,$ and $ a^{-1}cc^{-1}a\ \tau\ a^{-1}a.$  

For the latter claim note that as $\tau$ is a congruence:
$$(a^{-1}bb^{-1}a)(a^{-1}cc^{-1}a)\ \tau\ (a^{-1}a)(a^{-1}cc^{-1}a)=a^{-1}cc^{-1}a,$$
also as $T\subseteq N$ and $a^{-1}b\in T$ we conjugate $b^{-1}cc^{-1}b\ \tau\ b^{-1}b$ by $a^{-1}b$ to get
$$(a^{-1}bb^{-1}a)(a^{-1}cc^{-1}a)=(a^{-1}b)(b^{-1}cc^{-1}b)(a^{-1}b)^{-1}\ \tau\ (a^{-1}b)(b^{-1}b)(a^{-1}b)^{-1}= a^{-1}bb^{-1}a$$
We thus obtain:
$$a^{-1}cc^{-1}a\ \tau\ a^{-1}bb^{-1}a\ \tau\ a^{-1}a,$$
and the dual argument gives that $c^{-1}aa^{-1}c\ \tau\ c^{-1}c.$

For the former claim as $\rho$ is left compatible we have that $a^{-1}b\ \rho\ a^{-1}c,\ c^{-1}a\ \rho\ c^{-1}b$ and hence $c^{-1}aa^{-1}b,\ b^{-1}cc^{-1}a\in T.$  We also have $b^{-1}a,\ c^{-1}b\in T,$ and since $T$ is a subsemigroup $(c^{-1}aa^{-1}b)(b^{-1}a),\ (c^{-1}b)(b^{-1}cc^{-1}a)\in T.$  Also 
\begin{gather*}
c^{-1}bb^{-1}c\ \tau\ c^{-1}c\ \tau\ c^{-1}aa^{-1}c=(c^{-1}a)(c^{-1}a)^{-1},\\
a^{-1}bb^{-1}a\ \tau\ a^{-1}a\ \tau\ a^{-1}cc^{-1}a=(c^{-1}a)^{-1}(c^{-1}a).
\end{gather*}
Thus by \ref{lcpdeff2} with $x=c^{-1}a,$ $e=a^{-1}bb^{-1}a,$ and $f=c^{-1}bb^{-1}c$ we have that $c^{-1}a\in T,$ and hence that $\rho$ is a transitive relation and thus is a left congruence on $S.$

Finally we show that $\tra/(\rho)=\tau,$ and $\ink/(\rho)=T.$  Suppose that we have $e,f\in E$ with $e\ \rho\ f,$ then since $e^{-1}=e,$ and $f^{-1}=f$ we have $e\ \tau\ ef\ \tau\ f.$  Hence $e\ \tau\ f.$  Conversely if $e\ \tau\ f$ then it is immediate that $e\ \rho\ f,$ so $\tra/(\rho)=\tau.$  

For the inverse kernel we note that if $a\in T$ then $a\ \rho\ aa^{-1},$ so $T\subseteq \ink/(\rho).$  Moreover if $a\ \rho\ aa^{-1},$ then $a^{-1}(aa^{-1})=a^{-1}\in T$ and since $T$ is inverse we get $a\in T,$ thus $\ink/(\rho)=T.$
\end{proof}

Shortly we shall see that every left congruence is of this form.  However it is beneficial to first consider how it is possible to recover a left congruence from the minimum left congruence with the same trace and the minimum left congruence with the same inverse kernel.

\begin{theorem}\label{thm1}
Let $\rho$ be a left congruence, $\nu$ the minimum left congruence with $\tra/(\nu)=\tra/(\rho)$, and $\chi=\rho\cap\mathcal{R}$.  Then 
$$\rho=\nu\vee\chi.$$
Moreover for $a\in K=\ker/(\rho)$ there is some $f\in E$ such that $f\ \chi\ fa\ \nu\ a.$
\end{theorem}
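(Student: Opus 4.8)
The plan is to establish $\rho=\nu\vee\chi$ through the kernel-trace characterisation recorded after \cref{PR1}: two left congruences coincide exactly when they share both a trace and a kernel. The inclusion $\nu\vee\chi\subseteq\rho$ is immediate, since $\nu$ is the minimum left congruence with trace $\tra/(\rho)$ and so $\nu\subseteq\rho$, while $\chi=\rho\cap\mathcal{R}\subseteq\rho$ by construction; as $\rho$ is thus an upper bound for both it must contain their join. It remains to prove the reverse inclusion, and for this I would set $\sigma=\nu\vee\chi$ and show $\tra/(\sigma)=\tra/(\rho)$ and $\ker/(\sigma)=\ker/(\rho)$, whence $\sigma=\rho$ by the characterisation.

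The trace equality is the easy half: since $\nu\subseteq\sigma\subseteq\rho$ and the trace map is order preserving, $\tra/(\sigma)$ is caught between $\tra/(\nu)=\tra/(\rho)$ and $\tra/(\rho)$, forcing equality. Likewise $\ker/(\sigma)\subseteq\ker/(\rho)$ is automatic from $\sigma\subseteq\rho$, so the entire content lies in the reverse containment $K=\ker/(\rho)\subseteq\ker/(\sigma)$. This is precisely where the ``moreover'' clause does the work: if for each $a\in K$ we can produce $f\in E$ with $f\ \chi\ fa\ \nu\ a$, then $f\ \sigma\ a$ by transitivity (as $\chi,\nu\subseteq\sigma$), and the idempotent $f$ witnesses $a\in\ker/(\sigma)$.

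So the crux is the ``moreover'' statement itself. Given $a\in K$, I would fix $e\in E$ with $a\ \rho\ e$ and take the candidate $f=eaa^{-1}$, which is an idempotent below $aa^{-1}$ and satisfies $fa=ea$. Then I would verify the three required relations in turn: first, $f\ \rho\ fa$, obtained by left-multiplying $a\ \rho\ e$ by $e$ and then by $aa^{-1}$ and simplifying using that idempotents commute; second, $f\ \mathcal{R}\ fa$, which reduces to $ff^{-1}=(fa)(fa)^{-1}$ and follows at once from $f\le aa^{-1}$, giving $f\ \chi\ fa$ when combined with the first; and third, $fa\ \nu\ a$, read off the explicit description of $\nu=\nu_\tau$ in \cref{M1} by checking that $(fa)^{-1}(fa)=a^{-1}fa\ \tau\ a^{-1}a$ and exhibiting the idempotent $g=a^{-1}fa$ with $fag=ag$.

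The main obstacle is this third step, and inside it the trace condition $a^{-1}fa\ \tau\ a^{-1}a$. Here I would note that $a^{-1}fa=a^{-1}ea$, and that $a\ \rho\ e$ gives $ea\ \rho\ a$ after left multiplication by $e$, hence $a^{-1}ea\ \rho\ a^{-1}a$ after left multiplication by $a^{-1}$; as both sides are idempotent this descends to $\tau$. The remaining bookkeeping, that $g=a^{-1}fa$ indeed satisfies $fag=ag$, is a short idempotent computation using $f\le aa^{-1}$ together with $fa=ea$. Once the ``moreover'' clause is in hand the two kernel inclusions close up, the kernel-trace characterisation delivers $\sigma=\rho$, and the proof is complete.
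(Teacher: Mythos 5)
Your proposal is correct and follows essentially the same route as the paper: the same reduction to equality of trace and kernel via the kernel--trace characterisation, and the same witness $f=eaa^{-1}$ for the ``moreover'' clause. The only cosmetic difference is that you obtain $a^{-1}fa\ \tau\ a^{-1}a$ directly from $a\ \rho\ e$ and then verify the explicit membership condition for $\nu_\tau$ from the closed form in the theorem of Meakin and Petrich--Rankin, whereas the paper routes through the maximum left congruence $\mu$ of the trace class and concludes by left-multiplying the resulting $\nu$-relation; both arguments are sound.
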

\begin{proof}
We note that $\nu,\chi\subseteq\rho$, and since $\rho$ is a left congruence we get $\nu\vee\chi\subseteq \rho$, so, as the kernel map is order preserving, certainly $\ker/(\nu\vee\chi)\subseteq\ker/(\rho)$.  Also $\nu \subseteq \nu\vee\chi \subseteq \rho$ and since the trace map is order preserving we certainly have $\text{trace}(\nu\vee\chi)=\text{trace}(\rho)$.  Therefore to complete the proof suffices prove the final claim of the theorem, whence it is immediate that $\ker/(\nu\vee\chi)\supseteq\ker/(\rho)$.

Suppose $a\in\ker/(\rho)$, and let $e$ be an idempotent in the $\rho$-class of $a.$  Let $f=eaa^{-1},$ then from $e\ \rho\ a$ we get 
$$f= aa^{-1}e\ \rho\ aa^{-1}a =a.$$ 
Then as $\rho$ is a left congruence, $fa\ \rho\ f$.  We also note that $(fa)(fa)^{-1}=faa^{-1}f=faa^{-1}=f$, so $fa\ \mathcal{R}\ f$ and therefore $f\ \chi\ fa$.

As $\mu$ is maximum in the trace class of $\rho$ we have $fa\ \mu\ a.$ By \cref{M1} we have $a^{-1}fa\ \tau\ a^{-1}a$, and hence $a^{-1}fa\ \nu\ a^{-1}a$.  As $\nu$ is also a left congruence this gives $fa\ \nu\ a$ completing the proof.
\end{proof}

\begin{theorem}\label{biggy}
Let $(\tau,T)$ be a inverse congruence pair for $S,$ then $\rho_{(\tau,T)}$ is a left congruence on $S$ with trace $\tau$ and inverse kernel $T.$  Conversely, if $\rho$ is a left congruence on $S$ then $(\tra/(\tau),\ink/(\rho))$ is an inverse congruence pair for $S$ and $\rho=\rho_{(\tra/(\tau),\ink/(\rho))}.$
\end{theorem}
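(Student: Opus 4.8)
The first assertion of the theorem is exactly \cref{cor4}, so it remains only to prove the converse. Fix a left congruence $\rho$ and write $\tau=\tra/(\rho)$, $T=\ink/(\rho)$ and $K=\ker/(\rho)$. The plan is first to check that $(\tau,T)$ satisfies conditions (D1) and \ref{lcpdeff2} of \cref{lcpdeff}, so that it is an inverse congruence pair, and then to prove $\rho=\rho_{(\tau,T)}$ by showing each relation is contained in the other.

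Condition (D1) is immediate: \cref{lem1}\ref{lem4} gives $T=K\cap N(\tau)$, so in particular $T\subseteq N(\tau)$. For \ref{lcpdeff2} I would use the characterisation $T=\{a\in K\mid a^{-1}\in K\}$ of \cref{lem1}\ref{ink1}, reducing the goal $x\in T$ to proving $x\in K$ and $x^{-1}\in K$ separately. Suppose $e,f\in E$ satisfy $x^{-1}x\ \tau\ e$, $xx^{-1}\ \tau\ f$ and $xe,fx\in T$. Since $\tau$ is the trace of $\rho$ we have $e\ \rho\ x^{-1}x$, and left multiplying by $x$ gives $xe\ \rho\ x$; as $xe\in T$ we also have $xe\ \rho\ (xe)(xe)^{-1}=xex^{-1}\in E$, so $x\ \rho\ xex^{-1}$ and hence $x\in K$. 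Dually, $x^{-1}f=(fx)^{-1}\in T$ because $T$ is inverse, and from $f\ \rho\ xx^{-1}$ one left multiplies by $x^{-1}$ and uses $x^{-1}f\in T$ to reach $x^{-1}\ \rho\ x^{-1}fx\in E$, so $x^{-1}\in K$. Then \cref{lem1}\ref{ink1} gives $x\in T$, establishing \ref{lcpdeff2}.

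For the equality, the inclusion $\rho\subseteq\rho_{(\tau,T)}$ is the easier half. Given $x\ \rho\ y$, left multiplication by $x^{-1}$ and by $y^{-1}$ shows $x^{-1}y,\ y^{-1}x\in K$, and since $y^{-1}x=(x^{-1}y)^{-1}$ we get $x^{-1}y\in T$ from \cref{lem1}\ref{ink1}; the two trace conditions then follow by left multiplying $x\ \rho\ y$ by $x^{-1}yy^{-1}$ and by $y^{-1}xx^{-1}$ respectively and comparing the resulting idempotents under $\tau$.

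The reverse inclusion $\rho_{(\tau,T)}\subseteq\rho$ is where I expect the main difficulty, since the three separate defining conditions of $\rho_{(\tau,T)}$ must be recombined into the single relation $x\ \rho\ y$ using only left multiplication. The route I would take passes through the intermediate element $xx^{-1}y$. From $x^{-1}y\in T$ we get $x^{-1}y\ \rho\ x^{-1}yy^{-1}x$, and from the trace condition $x^{-1}yy^{-1}x\ \tau\ x^{-1}x$ we get $x^{-1}yy^{-1}x\ \rho\ x^{-1}x$; left multiplying both by $x$ yields $xx^{-1}y\ \rho\ x$. On the other side, the trace condition $y^{-1}xx^{-1}y\ \tau\ y^{-1}y$ left multiplied by $y$, together with the commutativity of the idempotents $xx^{-1}$ and $yy^{-1}$ which collapses $yy^{-1}xx^{-1}y$ to $xx^{-1}y$, yields $xx^{-1}y\ \rho\ y$. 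Chaining these two relations gives $x\ \rho\ y$, which completes $\rho_{(\tau,T)}\subseteq\rho$ and hence the theorem.
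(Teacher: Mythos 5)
Your proof is correct, but the second half takes a genuinely different route from the paper. For condition \ref{lcpdeff2} the paper argues via a chain of conjugations inside $N(\tau)$ to land on $a\ \rho\ aa^{-1}$ directly, whereas you split the goal through \cref{lem1}\ref{ink1} into the two memberships $x\in K$ and $x^{-1}\in K$; your version is shorter and arguably cleaner, at the small cost of routing through the kernel rather than staying with the inverse kernel. The more substantial divergence is in proving $\rho=\rho_{(\tau,T)}$: the paper does no element chasing at all, but instead invokes \cref{thm1} to write both $\rho$ and $\rho_{(\tau,T)}$ as $(\,\cdot\cap\mathcal{R})\vee\nu_\tau$, and then uses the fact that idempotent separating left congruences are determined by their inverse kernel (\cref{M2}) to conclude $\rho\cap\mathcal{R}=\rho_{(\tau,T)}\cap\mathcal{R}$ and hence equality. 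Your direct double inclusion, pivoting on the intermediate element $xx^{-1}y$, avoids any reliance on \cref{thm1} and on the decomposition $\rho=\nu\vee\chi$, so it is more elementary and self-contained; what it gives up is the structural insight that the paper's argument makes explicit, namely that a left congruence is recovered as the join of its trace-minimal and idempotent-separating parts, which the paper then reuses repeatedly (e.g.\ in \cref{uniquepair} and the join computations of \cref{mapsection}). One presentational nit: in your forward inclusion, the step deducing $x^{-1}yy^{-1}x\ \tau\ x^{-1}x$ needs to combine the relation $x^{-1}yy^{-1}x\ \rho\ x^{-1}y$ with $x^{-1}y\ \rho\ x^{-1}x$ (obtained by left multiplying by $x^{-1}$) before comparing idempotents; as written this is implicit but easily supplied.
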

\begin{proof}
Suppose that $\rho$ is a left congruence and let $\tau=\tra/(\rho),$ and $T=\ink/(\rho).$  Then by \cref{lem1}\ref{lem4} we have $T=N(\tau)\cap\ker/(\rho)$ so $T\subseteq N(\tau).$  Suppose that $a\in S$ and there are $e,f\in E(S)$ with $ae,fa\in T,$ and $a^{-1}a\ \tau\ e,\ aa^{-1}\ \tau\ f.$  Since $ae,fa\in T$ we know that
$$ae\ \rho\ (ae)(ae)^{-1} = aea^{-1},\ fa\ \rho\ (fa)(fa)^{-1} = faa^{-1}.$$
Since $fa\in T$ we may conjugate $a^{-1}a\ \tau\ e$ by $fa$ to get $faa^{-1}\ \tau\ faea^{-1}.$  As $aa^{-1}\ \tau\ f$ we have $aa^{-1}\ \tau\ aa^{-1}f\ \tau\ f$ and as $a^{-1}a\ \tau e$ we have $ae\ \rho\ aa^{-1}a = a.$  We then observe that:
$$aa^{-1}\ \tau\ faa^{-1}\ \tau\ faea^{-1} = aea^{-1}f\ \tau\ aea^{-1}(aa^{-1}) = aea^{-1}\ \rho\ ae\ \rho\ a.$$  Hence $a\in \ink/(\rho)=T,$ so \ref{lcpdeff2} is satisfied and $(\tau,T)$ is an inverse congruence pair.

It remains to show that $\rho=\rho_{(\tau,T)}.$  We know that $\tra/(\rho_{(\tau,T)})=\tau=\tra/(\rho).$ Let $\nu$ be the minimum left congruence with trace $\tau$, by \cref{thm1} we have that $\rho=(\rho\cap\mathcal{R})\vee\nu,$ and also $\rho_{(\tau,T)}=(\rho_{(\tau,T)}\cap\mathcal{R})\vee\nu.$  However $\ink/(\rho)=T=\ink/(\rho_{(\tau,T)})$ and since idempotent separating congruences are uniquely determined by their inverse kernel $\rho_{(\tau,T)}\cap\mathcal{R}=\rho\cap\mathcal{R}.$  Hence $\rho=\rho_{(\tau,T)}.$
\end{proof}

Thus left congruences on inverse semigroups are determined by their trace and inverse kernel, and thus we may realise the lattice of left congruences as a subset of $\mathfrak{C}(E)\times\mathfrak{V}(S).$  We denote this set $\mathfrak{IP}(S).$ As in the case of the kernel trace description the ordering of left congruences coincides with the natural ordering in the lattice $\mathfrak{C}(S)\times\mathfrak{V}(S).$

\begin{corr}\label{order lem}
Let $\rho_1,\rho_2$ be left congruences on $S$, then
$$ \rho_1\subseteq \rho_2 \iff \tra/(\rho_1)\subseteq \tra/(\rho_2) \text{ and } \ink/(\rho_1)\subseteq \ink/(\rho_2).$$
Consequently,
$$ \rho_1 = \rho_2 \iff \tra/(\rho_1) = \tra/(\rho_2) \text{ and } \ink/(\rho_1) = \ink/(\rho_2).$$
\end{corr}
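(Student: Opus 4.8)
The plan is to reduce the corollary to \cref{biggy}, which establishes that a left congruence is completely determined by the pair $(\tra/(\rho),\ink/(\rho))$ via the formula $\rho=\rho_{(\tra/(\rho),\ink/(\rho))}$. This is exactly the inverse-kernel-trace analogue of the ordering statement already recorded for the kernel-trace description just after \cref{PR1}, so the proof should mirror that reasoning with $\ink/$ in place of $\ker/$.

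First I would prove the forward implication of the biconditional. The trace and inverse kernel maps are both order preserving: this was already observed for the trace map in the remarks following \cref{deff2}, and for the inverse kernel map explicitly in the paragraph after the inverse kernel class definition (if $\rho_1\subseteq\rho_2$ then $\ink/(\rho_1)\subseteq\ink/(\rho_2)$). Hence $\rho_1\subseteq\rho_2$ immediately yields $\tra/(\rho_1)\subseteq\tra/(\rho_2)$ and $\ink/(\rho_1)\subseteq\ink/(\rho_2)$.

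For the reverse implication, suppose $\tra/(\rho_1)\subseteq\tra/(\rho_2)$ and $\ink/(\rho_1)\subseteq\ink/(\rho_2)$. The key step is to unpack the explicit description of $\rho_{(\tau,T)}$ and check that its defining conditions are preserved under enlarging $\tau$ and $T$. Writing $\tau_i=\tra/(\rho_i)$ and $T_i=\ink/(\rho_i)$, \cref{biggy} gives $\rho_i=\rho_{(\tau_i,T_i)}$. If $(x,y)\in\rho_1$, then $x^{-1}y\in T_1\subseteq T_2$, and $x^{-1}yy^{-1}x\ \tau_1\ x^{-1}x$ together with $\tau_1\subseteq\tau_2$ gives $x^{-1}yy^{-1}x\ \tau_2\ x^{-1}x$; similarly $y^{-1}xx^{-1}y\ \tau_2\ y^{-1}y$. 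Thus $(x,y)$ satisfies the defining conditions of $\rho_{(\tau_2,T_2)}=\rho_2$, so $\rho_1\subseteq\rho_2$.

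The main obstacle, such as it is, lies in being careful that the defining conditions of $\rho_{(\tau,T)}$ are genuinely monotone in both coordinates; once the explicit membership criterion is in hand, the verification is a direct substitution. The final equivalence for equality then follows by applying the biconditional in both directions, since $\rho_1=\rho_2$ is equivalent to $\rho_1\subseteq\rho_2$ and $\rho_2\subseteq\rho_1$ holding simultaneously, which translates into equality of both the trace and the inverse kernel.
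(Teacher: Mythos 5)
Your proposal is correct and is essentially the argument the paper intends: the corollary is left as an immediate consequence of \cref{biggy}, and your verification that the defining conditions of $\rho_{(\tau,T)}$ are monotone in both coordinates, combined with order preservation of the trace and inverse kernel maps for the forward direction, is exactly the justification the paper omits. The equality statement then follows by double inclusion as you say.
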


The following is an important corollary, and is the primary method with which the idea of the inverse kernel trace characterisation of left congruence will be applied in the rest of the paper.

\begin{corr}\label{uniquepair}
Let $\rho$ be a left congruence on $S.$  Let $T=\ink/(\rho),$ and $\tau=\tra/(\rho).$  Then $(\tau,T)$ is the unique pair in $\mathfrak{C}(E)\times\mathfrak{V}(S)$ such that $(\tau,T)$ is an inverse congruence pair, and
$$\rho=\rho_{(\tau,T)}=\chi_T\vee\nu_\tau.$$
\end{corr}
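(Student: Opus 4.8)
The plan is to show that Corollary~\ref{uniquepair} is essentially a repackaging of \cref{biggy} together with \cref{thm1} and \cref{order lem}, so almost all of the work has already been done. First I would invoke \cref{biggy}: since $\rho$ is a left congruence, the pair $(\tau,T)=(\tra/(\rho),\ink/(\rho))$ is an inverse congruence pair and $\rho=\rho_{(\tau,T)}$. This immediately supplies existence of an inverse congruence pair with the required trace and inverse kernel, and the first equality $\rho=\rho_{(\tau,T)}$.

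Next I would establish uniqueness of the pair in $\mathfrak{C}(E)\times\mathfrak{V}(S)$. Suppose $(\tau',T')$ is any inverse congruence pair with $\rho=\rho_{(\tau',T')}$. By the second assertion of \cref{cor4} (applied to $(\tau',T')$), the left congruence $\rho_{(\tau',T')}$ has trace $\tau'$ and inverse kernel $T'$; since this left congruence equals $\rho$, we must have $\tau'=\tra/(\rho)=\tau$ and $T'=\ink/(\rho)=T$. This pins down the pair uniquely. (Alternatively one could phrase uniqueness through \cref{order lem}, which says a left congruence is determined by the pair $(\tra/,\ink/)$, but reading off trace and inverse kernel directly from \cref{cor4} is cleaner.)

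Finally I would identify $\rho_{(\tau,T)}$ with the join $\chi_T\vee\nu_\tau$. Here $\nu_\tau$ is the minimum left congruence with trace $\tau$ (from \cref{M1}), and $\chi_T$ is the unique idempotent separating left congruence with inverse kernel $T$ (from \cref{M2} and the surrounding discussion, noting $\chi_T=\rho\cap\mathcal{R}$ since $\ink/(\rho)=\ker/(\rho\cap\mathcal{R})=T$). The equality $\rho=\chi\vee\nu$ is exactly the content of \cref{thm1}, with $\chi=\rho\cap\mathcal{R}=\chi_T$ and $\nu=\nu_\tau$. Chaining these gives $\rho=\rho_{(\tau,T)}=\chi_T\vee\nu_\tau$, completing the proof.

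The main obstacle, such as it is, is purely bookkeeping: one must be careful that the $\chi$ appearing in \cref{thm1} really is the $\chi_T$ of the present statement. This requires the observation (already made in the discussion after \cref{M2} and in \cref{lem1}(ii)) that $\rho\cap\mathcal{R}$ is idempotent separating with $\ker/(\rho\cap\mathcal{R})=\ink/(\rho)=T$, so by the uniqueness in \cref{M2} it coincides with $\chi_T$. Once this identification is made, no genuinely new argument is needed; the corollary is a direct consolidation of the preceding results.
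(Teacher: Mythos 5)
Your proposal is correct and follows essentially the same route as the paper, which states this corollary without a separate proof precisely because it is the consolidation of \cref{biggy}, \cref{thm1} (via the identification $\rho\cap\mathcal{R}=\chi_T$ from \cref{lem1} and \cref{M2}), and the uniqueness supplied by \cref{cor4}/\cref{order lem}. Your bookkeeping point about checking that the $\chi$ of \cref{thm1} really is $\chi_T$ is exactly the one observation that needs to be made, and you make it correctly.
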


At this juncture it is worth mentioning the connection between the inverse kernel and the natural isomorphism between the lattices of left and right congruence lattices.
$$\rho\mapsto \rho_{-1}=\{(a^{-1},b^{-1})\ |\ (a,b)\in \rho\}$$

We recall the equivalent expression for the inverse kernel of left congruence from \cref{lem1} \ref{ink1} which we can write as:
$$\ink/(\rho) = \{ a\in S\ |\ \exists e,f\in E \text{ such that } a\ \rho\ e,\ a^{-1}\ \rho\ f \}.$$
This is defined for any equivalence relation on $S,$ and when $\rho$ is a right congruence is equivalent to $\{ a\in S\ |\ a\ \rho\ a^{-1}a \}.$  We take this as the definition of an inverse kernel for a right congruence.  We note that \cref{lcpdeff} is self dual with respect to taking the inverse, thus we obtain the following corollary:

\begin{corr}
The pair $(\tau,T)$ is the trace and inverse kernel of a left congruence $\rho$ if and only if it is the trace and inverse kernel of a right congruence.  Moreover if $\rho$ is a left congruence then the right congruence with the same trace and inverse kernel is $\rho_{-1}.$
\end{corr}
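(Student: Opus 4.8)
The plan is to prove the result directly from the involution $\rho\mapsto\rho_{-1}$ between \lcs/ and \rcs/, by showing that this map preserves both the trace and the inverse kernel; once this is done, both implications of the equivalence and the final assertion drop out at once. The whole argument rests on the symmetric description of the inverse kernel recalled just above the statement, namely $\ink/(\rho)=\{a\in S\ |\ \exists\, e,f\in E \text{ with } a\ \rho\ e,\ a^{-1}\ \rho\ f\}$, which (as noted there) is valid for left and right congruences alike and is visibly unchanged when $a$ and $a^{-1}$ are interchanged. Since $\rho\mapsto\rho_{-1}$ is an involutive bijection carrying \lcs/ onto \rcs/, it suffices to verify the two preservation identities for an arbitrary left congruence $\rho$; the reverse direction follows from the identical computation applied to a right congruence.

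First I would treat the trace. Since every idempotent is self-inverse, for $e,f\in E$ we have $e\ \rho_{-1}\ f\iff e^{-1}\ \rho\ f^{-1}\iff e\ \rho\ f$, so $\tra/(\rho_{-1})=\tra/(\rho)$. For the inverse kernel I would unwind the definition of $\rho_{-1}$: for $a\in S$ and $e\in E$ one has $a\ \rho_{-1}\ e\iff a^{-1}\ \rho\ e$ (again using $e^{-1}=e$), and likewise $a^{-1}\ \rho_{-1}\ f\iff a\ \rho\ f$. Substituting these into the symmetric description gives $a\in\ink/(\rho_{-1})$ iff there exist $e,f\in E$ with $a^{-1}\ \rho\ e$ and $a\ \rho\ f$, which is precisely the condition $a\in\ink/(\rho)$ with the two witnessing idempotents relabelled. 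Hence $\ink/(\rho_{-1})=\ink/(\rho)$.

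With both identities in hand the corollary is immediate. If $(\tau,T)=(\tra/(\rho),\ink/(\rho))$ for a left congruence $\rho$, then $\rho_{-1}$ is a right congruence with $(\tra/(\rho_{-1}),\ink/(\rho_{-1}))=(\tau,T)$, proving one implication and simultaneously exhibiting $\rho_{-1}$ as a right congruence realising $(\tau,T)$. Running the same computation on a right congruence $\sigma$ shows $\sigma_{-1}$ is a left congruence with the same trace and inverse kernel, which gives the converse. To upgrade this to the \emph{uniqueness} claim that $\rho_{-1}$ is \emph{the} right congruence with trace $\tau$ and inverse kernel $T$, I would note that if right congruences $\sigma_1,\sigma_2$ share their trace and inverse kernel then, by the preservation identities, so do the left congruences $(\sigma_1)_{-1},(\sigma_2)_{-1}$; hence $(\sigma_1)_{-1}=(\sigma_2)_{-1}$ by \cref{order lem}, and applying the involution again yields $\sigma_1=\sigma_2$.

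I do not anticipate a genuine obstacle: the content is entirely the symmetry already flagged in the remark that \cref{lcpdeff} is self-dual, and no appeal to \cref{biggy} or to the inverse congruence pair conditions is actually needed. The only points requiring care are bookkeeping: correctly translating each relation through the definition of $\rho_{-1}$ (and tracking that the two witnessing idempotents swap roles in the inverse-kernel condition), and deriving the uniqueness statement from \cref{order lem} via the involution rather than asserting a separate right-handed theory.
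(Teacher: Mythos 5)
Your proposal is correct, and it takes a more explicit route than the paper does. The paper disposes of this corollary in one line by observing that \cref{lcpdeff} is self-dual under $a\mapsto a^{-1}$, so that the set of pairs realisable by left congruences and the set realisable by right congruences both coincide with the set of inverse congruence pairs via \cref{biggy} and its right-handed dual; the identification of the witnessing right congruence as $\rho_{-1}$ is left implicit in the surrounding remark about the isomorphism $\rho\mapsto\rho_{-1}$. You instead bypass the characterisation theorem entirely and verify directly that the involution preserves the trace (trivially, since idempotents are self-inverse) and the inverse kernel (via the symmetric description from \cref{lem1}, under which $a$ and $a^{-1}$ merely swap the roles of the two witnessing idempotents), then obtain uniqueness by pulling back through the involution and applying \cref{order lem}. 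Your computation of $a\ \rho_{-1}\ e\iff a^{-1}\ \rho\ e$ and the resulting identity $\ink/(\rho_{-1})=\ink/(\rho)$ is sound, given that the paper adopts the symmetric description as the definition of the inverse kernel for right congruences. What your approach buys is a self-contained proof of the ``moreover'' clause and no need to develop a right-handed analogue of \cref{biggy}; what the paper's approach buys is brevity and the sharper structural fact that the realisable pairs on either side are exactly the inverse congruence pairs. Both are valid.
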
 

It is also of interest to consider how the kernel and inverse kernel of a left congruence are related.  We have seen that the inverse kernel is the largest inverse subsemigroup contained in the kernel, it is however possible to say more; from the isomorphism between lattices of left and right congruences ($\rho\mapsto\rho_{-1}$) it is immediate that $\tra/(\rho_{-1})=\tra/(\rho)$ and $\ker/(\rho_{-1})=\{a^{-1}\ |\ a\in \ker/(\rho)\}.$  By \cref{lem1} it is then clear that $\ink/(\rho)=\ker/(\rho)\cap\ker/(\rho_{-1})=\ink/(\rho_{-1}).$  Conversely, starting with the inverse kernel of a left congruence, by \cref{thm1} we get that for a left congruence $\rho$ on $S$ with trace $\tau$ the kernel is:
$$\ker/(\rho)=\bigcup_{a\in\ink/(\rho)}[a]_{\nu}.$$

\section{Trace classes}

Motivated by results describing the lattice of idempotent separating left congruences we describe the trace class for an arbitrary trace.  We also give an inverse kernel trace description of the maximum and minimum elements in each trace class.

We define the centraliser of a trace: 
$$C(\tau)= \{ a\in N(\tau)\ |\ \exists e\in E \text{ such that } e\ \tau\ a^{-1}a \text{ and } ae=e\}.$$
We note that this is a full inverse subsemigroup of $S.$  This is straightforward to show either directly or noting that $C(\tau)=N(\tau)\cap\ker/(\nu_\tau)=\ink/(\nu_\tau).$ 

We can also observe that $C(\tau)$ is self conjugate in $N(\tau).$  Indeed, suppose $a\in C(\tau)$ and $b\in N(\tau),$ so there is $e\in E$ with $e\ \tau\ a^{-1}a$ and $ae=e.$  Then $beb^{-1}\in E$ and 
$$(bab^{-1})(beb^{-1})= baeb^{-1} =beb^{-1}$$
We then observe that:
$$beb^{-1}=(beb^{-1})(beb^{-1})= (beb^{-1})(ba^{-1}b^{-1})(bab^{-1})(beb^{-1}) = (ba^{-1}b^{-1})(bab^{-1})(beb^{-1}).$$
Also as $b\in N(\tau)$ we may conjugate $a^{-1}a\ \tau\ e$ to get $ba^{-1}ab^{-1}\ \tau\ beb^{-1}.$  Hence we have:
$$beb^{-1}=(bab^{-1})^{-1}(bab^{-1})(beb^{-1})\ \tau\ (bab^{-1})^{-1}(bab^{-1})(ba^{-1}ab) = (bab^{-1})^{-1}(bab^{-1}).$$
Hence $bab^{-1}\in C(\tau)$ and thus $C(\tau)$ is a self conjugate full inverse subsemigroup of $N(\tau).$

\begin{prop}[see \cite{PetrichRankin}, Proposition 6.4]\label{PR3}
Let $\tau$ be a congruence on $E,$ and let $N=N(\tau).$  Let $\psi = \nu_\tau \cap (N\times N).$   Then $\psi$ is a two-sided congruence on $N$, and 
$$\psi = \{(a,b) \ |\  a^{-1}a\ \tau\ b^{-1}b,\ ab^{-1}\in C(\tau)\} $$ 
Moreover $\psi$ is the minimum congruence on $N$ with trace $\tau.$ 
\end{prop}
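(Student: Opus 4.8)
The plan is to exploit the identity $C(\tau)=\ink/(\nu_\tau)$ recorded just before the statement, together with the inverse-kernel--trace machinery applied to the inverse semigroup $N$ itself. I would organise the argument around four claims: that $\psi$ is two-sided, that $\tra/(\psi)=\tau$ and $\ink/(\psi)=C(\tau)$, that $\psi$ has the stated closed form, and finally that it is the least congruence on $N$ with trace $\tau$. Being the restriction of the left congruence $\nu_\tau$ to the subsemigroup $N$, $\psi$ is automatically a left congruence, so only right compatibility needs checking. Given $a\ \psi\ b$ there is, by the description of $\nu_\tau$ in \cref{M1}, an idempotent $e\ \tau\ a^{-1}a$ with $ae=be$; for $c\in N$ I would take $e'=c^{-1}ec$ as a witness for $ac\ \psi\ bc$. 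The point is that idempotents commute, so $(ac)e'=a(cc^{-1})ec=aec=bec=(bc)e'$, while $c\in N(\tau)$ guarantees the trace relations $e'\ \tau\ (ac)^{-1}(ac)$ and $(ac)^{-1}(ac)\ \tau\ (bc)^{-1}(bc)$ by conjugating $e\ \tau\ a^{-1}a$ and $a^{-1}a\ \tau\ b^{-1}b$ by $c$. Membership $ac,bc\in N$ is clear. Then $\tra/(\psi)=\nu_\tau|_E=\tau$, and since $C(\tau)=\ink/(\nu_\tau)\subseteq N$ one gets $\ink/(\psi)=\ink/(\nu_\tau)\cap N=C(\tau)$.

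For the closed form write $\theta$ for the right-hand set. The inclusion $\psi\subseteq\theta$ is a direct computation: from a witness $e$ I would pass to $e'=e\,(a^{-1}a)(b^{-1}b)\le a^{-1}a,\,b^{-1}b$, which is still a witness, and then verify that $g=be'b^{-1}$ is an idempotent satisfying $g\ \tau\ (ab^{-1})^{-1}(ab^{-1})$ and $(ab^{-1})g=g$, so that $ab^{-1}\in C(\tau)$. For the reverse inclusion I would prove the reusable sub-claim: if $\sigma$ is any two-sided congruence on $N$ with trace $\tau$ and $a^{-1}a\ \tau\ b^{-1}b$ with $ab^{-1}\in\ink/(\sigma)$, then $a\ \sigma\ b$. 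Writing $p=ab^{-1}ba^{-1}$, from $ab^{-1}\ \sigma\ p$ one takes inverses and right-multiplies by $a$ to get $ba^{-1}a\ \sigma\ pa$; since $b\ \sigma\ ba^{-1}a$ (left-multiply the trace relation by $b$) and $p=a(b^{-1}b)a^{-1}\ \tau\ aa^{-1}$ (conjugate $b^{-1}b\ \tau\ a^{-1}a$ by $a\in N$) gives $pa\ \sigma\ a$, the chain $b\ \sigma\ ba^{-1}a\ \sigma\ pa\ \sigma\ a$ yields $a\ \sigma\ b$. Taking $\sigma=\psi$, which is legitimate by the first two claims with $\ink/(\psi)=C(\tau)$, proves $\theta\subseteq\psi$.

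Minimality then follows cheaply from the ordering in \cref{order lem} applied to $N$: for any congruence $\sigma$ on $N$ with $\tra/(\sigma)=\tau$ I would show $C(\tau)\subseteq\ink/(\sigma)$, i.e. that each $x\in C(\tau)=\ink/(\nu_\tau)$ lies in $\ink/(\sigma)$. Here a witness $g\ \tau\ x^{-1}x$ with $xg=g$ gives, via $g\ \sigma\ x^{-1}x$, first $g\ \sigma\ x$ (left-multiply by $x$) and then, right-multiplying by $x^{-1}$ and using $gx^{-1}=g$, also $g\ \sigma\ xx^{-1}$; hence $x\ \sigma\ xx^{-1}$. Since moreover $\tra/(\psi)=\tau\subseteq\tra/(\sigma)$ and $\ink/(\psi)=C(\tau)\subseteq\ink/(\sigma)$, \cref{order lem} gives $\psi\subseteq\sigma$, so $\psi$ is the minimum congruence on $N$ with trace $\tau$.

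The main obstacle throughout is the inverse-semigroup bookkeeping in the reverse inclusion and in showing $C(\tau)\subseteq\ink/(\sigma)$: one must keep careful track of the fact that for $c=ab^{-1}$ the idempotents $cc^{-1}$ and $c^{-1}c$ are in general distinct and only $\tau$-related after conjugation, so the relations $cg=g$, $gc^{-1}=g$ and $g\le cc^{-1}$ have to be deployed exactly, with the membership $c\in N(\tau)$ used precisely to transport $\tau$ across conjugation. Once the witness idempotents are chosen correctly (in particular the refinement $e'\le a^{-1}a,\,b^{-1}b$ for $\psi\subseteq\theta$), every remaining step is a routine application of left/right compatibility and the commutativity of idempotents.
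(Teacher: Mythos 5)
Your argument is correct, but it is organised quite differently from the paper's. The paper takes the closed-form set as its primary object and proves directly that it equals $\langle\tau\rangle$, the left congruence on $N(\tau)$ generated by $\tau$: one inclusion by exhibiting an explicit $\tau$-sequence, the other by induction on the length of a $\tau$-sequence, with right-compatibility checked at the end; minimality is then automatic because a congruence generated by $\tau$ sits inside every congruence whose trace contains $\tau$. Crucially, the paper's proof is deliberately self-contained — it uses no prior knowledge of $\nu_\tau$ — precisely so that the identity $C(\tau)=\ink/(\nu_\tau)$ can afterwards be \emph{deduced} from the proposition. You instead treat $\psi=\nu_\tau\cap(N\times N)$ as primary, lean on the closed form of $\nu_\tau$ from \cref{M1} for right-compatibility and for computing $\tra/(\psi)$ and $\ink/(\psi)$, obtain the closed form of $\psi$ via a reusable lemma about two-sided congruences (essentially the standard kernel--trace argument of \cref{P1}), and get minimality from \cref{order lem} applied to the inverse semigroup $N$ after showing $C(\tau)\subseteq\ink/(\sigma)$ for every congruence $\sigma$ on $N$ with trace $\tau$. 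Your route is shorter and reuses the inverse-kernel machinery already developed in Section 3 (all of which precedes this proposition, so there is no dependency problem), at the price of one caveat: you must establish $C(\tau)=\ink/(\nu_\tau)$ \emph{before} invoking it, and you cannot appeal to the paper's own derivation of that identity, since the paper obtains it as a consequence of this very proposition. That is easily repaired — the identity follows directly from the closed form of $\nu_\tau$ in \cref{M1} (if $a\ \nu_\tau\ aa^{-1}$ with witness $e$, then $g=ae=eaa^{-1}$ is an idempotent with $ag=g$ and $g\ \tau\ a^{-1}a$; the converse is the computation you already give in the minimality step) — but it should be said explicitly. All the individual computations you outline (the witness $e'=c^{-1}ec$ for right-compatibility, the refinement $e'\le a^{-1}a,b^{-1}b$ and the idempotent $be'b^{-1}$, the chain $b\ \sigma\ ba^{-1}a\ \sigma\ pa\ \sigma\ a$, and $gx^{-1}=g$ from $g=xg$) check out.
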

\begin{proof}
This can be deduced from \cite{PetrichRankin} Prop 6.4 and the usual kernel-trace description of a two sided congruence on an inverse semigroup, however as we shall rely heavily on this result it is beneficial to include a direct proof.

We seek to prove that $\langle\tau\rangle=\psi.$  Where by $\langle\tau\rangle$ we mean the left congruence on $N(\tau)$ generated by $\tau.$  Suppose $(a,b)\in \psi,$ so $a^{-1}a\ \tau\ b^{-1}b$ and there is $e\in E$ such that $ba^{-1}ab^{-1}\ \tau\ e$ and $ab^{-1}e=e.$  We note that as $b\in N(\tau)$ we may conjugate $ba^{-1}ab^{-1}\ \tau\ e$ by $b$ to get: 
$$a^{-1}ab^{-1}b = b^{-1}ba^{-1}ab^{-1}b\ \tau\ b^{-1}eb.$$
We then get that:
$$a=aa^{-1}a\ \langle\tau\rangle\ aa^{-1}ab^{-1}b\ \langle\tau\rangle\ ab^{-1}eb = eb = bb^{-1}eb\ \langle\tau\rangle\ ba^{-1}ab^{-1}b = ba^{-1}a\ \langle\tau\rangle\ bb^{-1}b = b$$ 
Thus $\psi\subseteq \langle\tau\rangle.$

Next we will show that $\langle\tau\rangle\subseteq\psi.$  We recall that if $a\ \langle\tau\rangle\ b$ then there is a $\langle\tau\rangle$ sequence from $a$ to $b$, so we have $\{c_1,\dots,c_n\}\subseteq N(\tau)$ and $\{(e_1,f_1),\dots,(e_n,f_n)\}\subseteq \tau$ such that
$$a=c_1e_1,\ \text{ for } 1\leq i\leq n-1\ \text{ we have } c_if_i=c_{i+1}e_{i+1},\ \text{ and } c_nf_n=b.$$

We note that then $a^{-1}a=e_1c_1^{-1}c_1$ for $1\leq i\leq n-1$ we have $f_ic_i^{-1}c_i=e_{i+1}c_{i+1}^{-1}c_{i+1}$ and $f_nc_n^{-1}c_n=b^{-1}b.$  Then we observe 
$$a^{-1}a = e_1c_1^{-1}c_1\ \tau\ f_1c_1^{-1}c_1 = e_1c_2^{-1}c_2\ \tau\ \dots\ \tau\ f_nc_n^{-1}c_n = b^{-1}b.$$
Thus $a^{-1}a\ \tau\ b^{-1}b.$  To show $ab^{-1}\in C(\tau)$ we will induct on the length of $\langle\tau\rangle$ sequence.  We note that if the sequence is of length $1$ then there is some $(e,f)\in \tau$ and $c\in N(\tau)$ such that $a=ce,$ and $b=cf.$  Then $ab^{-1}=cefc^{-1},$ and $C(\tau)$ is full and self conjugate we have that $ab^{-1}\in C(\tau).$  

Suppose now that for all $x,y\in N(\tau)$ such that there is a $\langle\tau\rangle$ sequence of length $n-1$ from $x$ to $y$ we have that $xy^{-1}\in C(\tau).$  Suppose there is a $\langle\tau\rangle$ sequence of length $n$ from $a$ to $b.$  Then there is a $\langle\tau\rangle$ sequence of length $n-1$ from $a$ to $c_{n-1}f_{n-1}=c_ne_n,$ thus $ae_nc_n^{-1}\in C(\tau).$  Hence there is some $g\in E$ with $c_ne_na^{-1}ac_n^{-1}\ \tau\ g$ and $ae_nc_n^{-1}g=g.$  Since $c_n\in N(\tau)$ we can conjugate $c_ne_na^{-1}ac_n^{-1}\ \tau\ g$ by $c_n$ to obtain: $c_n^{-1}c_ne_na^{-1}a\ \tau\ c_n^{-1}gc_n.$  We then note that as $f_n\ \tau\ e_n$ we have $f_n\ \tau\ e_nf_n,$ we then note:
$$f_nc_n^{-1}c_na^{-1}a\ \tau\ f_ne_nc_n^{-1}c_na^{-1}a\ \tau\ f_ne_nc_n^{-1}gc_n.$$
We then conjugate this relation by $c_n^{-1}$ to obtain:
$$(af_nc_n^{-1})^{-1}(af_nc_n^{-1})= c_nf_na^{-1}ac_n^{-1}\ \tau\ c_nf_ne_nc_n^{-1}gc_nc_n^{-1} = gc_nf_ne_nc_n^{-1}.$$
Also 
$$af_nc_n^{-1}(gc_nf_ne_nc_n^{-1}) = ae_nc_n^{-1}gc_nf_ne_nc_n^{-1} = gc_n^{-1}f_ne_nc_n^{-1}.$$
Hence $a(c_nf_n)^{-1} = ab^{-1}\in C(\tau).$  Thus $\langle\tau\rangle=\psi,$ and thus $\psi$ is a left congruence on $N(\tau).$

Since we have that $\psi$ is the left congruence on $N(\tau)$ generated by $\tau$ to complete the proof it suffices to show that $\psi$ is a right congruence.  Suppose that $a\ \psi\ b$ and $c\in N(\tau).$  Then we conjugate the relation $a^{-1}a\ \tau\ b^{-1}b$ by $c$ to get
$$(ac)^{-1}(ac)=c^{-1}a^{-1}ac\ \tau\ c^{-1}b^{-1}bc = (bc)^{-1}(bc).$$
Also since $C(\tau)$ is a full subsemigroup and $ab^{-1}\in C(\tau)$ we have that 
$$(ac)(bc)^{-1} = acc^{-1}b^{-1} = ab^{-1}(bcc^{-1}b^{-1})\in C(\tau).$$
Thus $ac\ \tau\ bc$ and so $\psi$ is a right congruence. 
\end{proof}

We note that we have not used any prior knowledge about $\nu_\tau$ in the proof of \cref{PR3}, and in fact we can deduce directly from \cref{PR3} that $C(\tau)=\ink/(\nu_\tau).$  Indeed if $\tau$ is a congruence on $E$ and $\nu=\langle\tau\rangle,$ then if $a\in\ink/(\nu)$ then $a\in N(\tau)$ and thus $a\in \ink/(\nu|_{N(\tau)}).$  As $a\ \nu\ aa^{-1}$ by \cref{PR3} we have that $a^{-1}a\ \tau\ aa^{-1}$ and $a(aa^{-1})\in C(\tau).$  Hence there is some $e\in E$ such that $(aa^{-1})(a^{-1}a)\ \tau\ e$ and $a(aa^{-1})e=e.$  Then we note that 
$$(aa^{-1})e= (aa^{-1})a(aa^{-1})e = a(aa^{-1})e = e.$$
Hence $ae=a(aa^{-1})e= e.$  We also observe that 
$$a^{-1}a\ \tau\ (aa^{-1})(a^{-1}a)\ \tau\ e.$$
Thus $a\in C(\tau).$  Thus we have that $\ink/(\nu)\subseteq C(\tau).$  

Conversely we observe that if $a\in C(\tau)$ then certainly $a\ \nu\ a^{-1}a,$ so $a\in\ker/(\nu).$  However as previously noted for a two sided congruence the notion of kernel and inverse kernel coincide, hence $a\in \ink/(\nu).$  Thus $C(\tau)= \ink/(\nu).$

\begin{lemma}\label{sat lem}
Let $\tau$ be a congruence on $E$ and $T\subseteq S$ be a full inverse subsemigroup.  Then $(\tau,T)$ is an inverse congruence pair if and only if $T$ is a union of $\nu|_{N(\tau)}$ equivalence classes.
\end{lemma}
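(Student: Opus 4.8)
The plan is to prove both implications, writing $N=N(\tau)$ and $\psi=\nu|_{N}$, which by \cref{PR3} equals $\nu_\tau\cap(N\times N)$ and is the minimum (two-sided) congruence on $N$ with trace $\tau$. Throughout I would use that $\psi\subseteq\nu_\tau$ and that every $\psi$-class lies inside $N$, so the hypothesis ``$T$ is a union of $\psi$-classes'' already forces $T\subseteq N$, i.e.\ condition (D1); conversely, an inverse congruence pair satisfies (D1) by definition, so in both directions $T\subseteq N$ is available for free and only the closure condition is at issue.

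For the forward direction, suppose $(\tau,T)$ is an inverse congruence pair and set $\rho=\rho_{(\tau,T)}$; by \cref{cor4} this is a left congruence with $\tra/(\rho)=\tau$ and $\ink/(\rho)=T$, so $\nu_\tau\subseteq\rho$. Given $a\in T$ and $b\in N$ with $a\ \psi\ b$, I would note $a\ \nu_\tau\ b$, hence $a\ \rho\ b$; since $a\in\ink/(\rho)$ gives $a\ \rho\ aa^{-1}$, transitivity yields $b\ \rho\ aa^{-1}$, so $b\in\ker/(\rho)$. As $b\in N$ as well, \cref{lem1}\ref{lem4} (that $\ink/(\rho)=\ker/(\rho)\cap N$) forces $b\in T$. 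Thus $T$ is closed under $\psi$, i.e.\ a union of $\psi$-classes.

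For the converse, assume $T$ is a union of $\psi$-classes; (D1) is immediate, and the work is to verify (D2). So take $x\in S$ and $e,f\in E$ with $x^{-1}x\ \tau\ e$, $xx^{-1}\ \tau\ f$ and $xe,fx\in T\subseteq N$. The first and main step is to show $x\in N$. I would prove $x\in N_L(\tau)$ from $xe\in N$: for $g\ \tau\ h$ in $E$, membership $xe\in N_L(\tau)$ gives $(xe)^{-1}g(xe)\ \tau\ (xe)^{-1}h(xe)$, that is $e(x^{-1}gx)e\ \tau\ e(x^{-1}hx)e$. Writing $p=x^{-1}gx$, which is an idempotent satisfying $p(x^{-1}x)=p$, the relation $e\ \tau\ x^{-1}x$ multiplies up to $pe\ \tau\ p$, whence $epe=ep=pe\ \tau\ p$, so $e(x^{-1}gx)e\ \tau\ x^{-1}gx$ and likewise for $h$; chaining these three $\tau$-relations gives $x^{-1}gx\ \tau\ x^{-1}hx$. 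The symmetric computation from $fx\in N$ (using $q=xgx^{-1}$ with $q(xx^{-1})=q$ and $f\ \tau\ xx^{-1}$) yields $x\in N_R(\tau)$, and hence $x\in N$.

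The second step is then short: one checks directly from the formula for $\nu_\tau$ in \cref{M1} that $(x,xe)\in\nu_\tau$, since $(xe)^{-1}(xe)=x^{-1}xe\ \tau\ x^{-1}x$ and the witness $g=e$ satisfies $e\ \tau\ x^{-1}x$ together with $x\,e=(xe)\,e$. As $x,xe\in N$, this gives $x\ \psi\ xe$; since $xe\in T$ and $T$ is $\psi$-saturated, we conclude $x\in T$, establishing (D2). The main obstacle will be the first step of the converse, namely pinning down that $x$ actually lies in $N(\tau)$ via the idempotent manipulation above; once that is in hand, the rest collapses to the single membership $x\ \psi\ xe$ and saturation. (One could instead attempt to realise the pair through $\rho=\chi_T\vee\nu_\tau$ and invoke \cref{biggy}, but controlling the trace of that join is exactly as delicate, so the direct verification seems cleaner.)
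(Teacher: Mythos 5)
Your proof is correct. The converse direction is essentially the paper's own argument: both first extract $x\in N(\tau)$ from $xe,fx\in N(\tau)$ together with $e\ \tau\ x^{-1}x$ and $f\ \tau\ xx^{-1}$ (your manipulation via $p=x^{-1}gx$ and $p(x^{-1}x)=p$ is a harmless variant of the paper's computation $aga^{-1}=aa^{-1}(aga^{-1})\ \tau\ f(aga^{-1})$), and both then finish with the single relation $x\ \nu_\tau\ xe$ plus saturation. The forward direction, however, takes a genuinely different route. The paper invokes the explicit description of $\psi$ from \cref{PR3}: from $a\ \psi\ b$ with $b\in T$ it extracts $ab^{-1}\in C(\tau)\subseteq T$, manufactures idempotents $e=b^{-1}b$ and $f=ab^{-1}ba^{-1}$ with $ae=fa=ab^{-1}b\in T$, and then applies condition \ref{lcpdeff2} of \cref{lcpdeff} directly to conclude $a\in T$. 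You instead use only the containment $\psi\subseteq\nu_\tau\subseteq\rho_{(\tau,T)}$ (valid since $\nu_\tau$ is minimum in its trace class and $\tra/(\rho_{(\tau,T)})=\tau$ by \cref{cor4}) to place $b$ in $\ker/(\rho_{(\tau,T)})$, and then the identity $\ink/(\rho)=\ker/(\rho)\cap N(\tau)$ of \cref{lem1}\ref{lem4} to land $b$ in $T$. Your version is shorter and avoids any computation with $C(\tau)$; the paper's version exhibits concretely how \ref{lcpdeff2} is the saturation condition in disguise, which is the theme it wants to stress. Both arguments are valid, and your overall structure matches the paper's.
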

\begin{proof}
Let $\psi=\nu|_{N(\tau)}.$ Suppose that $(\tau,T)$ is an inverse congruence pair and let $\rho=\rho_{(\tau,T)}.$  Then we have that $T\subseteq N(\tau).$  We want to show that $T$ is saturated by $\psi,$ to which end suppose that $a\in N(\tau)$ and $a\ \psi\ b$ for some $b\in T.$  From the description of $\psi$ from \cref{PR3} we have $a^{-1}a\ \tau\ b^{-1}b,$ and $ab^{-1}\in C(\tau).$  Since $a\in N(\tau)$ we can conjugate $a^{-1}a\ \tau\ b^{-1}b$ by $a$ to get $aa^{-1}\ \tau\ ab^{-1}ba^{-1}.$ Then letting $e=b^{-1}b,\ f=ab^{-1}ba^{-1}$ we have $ae=ab^{-1}b=fa.$  Then we note that as $b,ab^{-1}\in T$ we have that $ab^{-1}b\in T,$ so by \ref{lcpdeff2} we get $a\in T,$ so $T$ is saturated by $\psi.$  

Conversely suppose that $T$ is a union of $\nu|_N$ equivalence classes.  Certainly $T\subseteq N$ by definition.  Suppose that $a\in S$ and there exist $e,f\in E$ such that $a^{-1}a\ \tau\ e,\ aa^{-1}\ \tau\ f,$ and $ae,fa\in T.$  We first show that $ a\in N.$  Suppose that $g,h\in E$ with $g\ \tau\ h.$  Then as $fa\in N(\tau)$ we conjugate $g\ \tau\ h$ by $fa$ to get $faga^{-1}\ \tau\ faha^{-1}.$  Since $aa^{-1}\ \tau\ f$  we then have
$$aga^{-1}= aa^{-1}(aga^{-1})\ \tau\ f(aga^{-1})\ \tau\ f(aha^{-1})\ \tau\ aa^{-1}(aha^{-1}) = aha^{-1}.$$
Conjugating $g\ \tau\ h$ by $ae$ and using that $eaa^{-1}\ \tau\ a^{-1}a$ a similar argument gives that $a^{-1}ga\ \tau\ a^{-1}ha,$ hence $a\in N.$  

We then note that $a^{-1}a\ \tau\ e$ implies that $a\ \nu\ ae.$  As $ae\in T$ and $T$ is saturated by $\nu|_N,$ we have $a\in T.$
\end{proof}
   
We can now the main result of this section, an extension of \cref{M1} to an arbitrary trace class.

\begin{theorem}\label{thm2}
Let $\tau$ be a congruence on $E$, let $N=N(\tau)$, $\nu$ the minimum left congruence on $S$ with trace $\tau$, and $\psi$ be the restriction of $\nu$ to $N.$  Then the lattice of left congruences on $S$ with trace $\tau$ is isomorphic to the lattice of full inverse subsemigroups of $N/\psi$
\end{theorem}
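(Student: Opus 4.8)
The plan is to realise the claimed isomorphism as a composite of two order isomorphisms. Write $\mathcal{L}_\tau$ for the set of left congruences on $S$ with trace $\tau$; by \cref{M1} this is the interval $[\nu_\tau,\mu_\tau]$ in \lcs/ and hence is itself a lattice. First I would consider the inverse kernel map restricted to this trace class, $\Phi\colon\mathcal{L}_\tau\to\mathfrak{V}(S)$, $\rho\mapsto\ink/(\rho)$. By \cref{biggy} together with \cref{uniquepair}, $\Phi$ is a bijection from $\mathcal{L}_\tau$ onto the set of full inverse subsemigroups $T$ for which $(\tau,T)$ is an inverse congruence pair. Moreover, since every congruence in $\mathcal{L}_\tau$ has the same trace $\tau$, \cref{order lem} gives $\rho_1\subseteq\rho_2\iff\ink/(\rho_1)\subseteq\ink/(\rho_2)$ for $\rho_1,\rho_2\in\mathcal{L}_\tau$. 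Thus $\Phi$ is an order isomorphism onto its image, and therefore a lattice isomorphism.

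Next I would identify this image explicitly. By \cref{sat lem}, $(\tau,T)$ is an inverse congruence pair exactly when $T$ is a union of $\psi$-classes; recalling that (D1) forces $T\subseteq N$ and that $N$ is full (so $E(N)=E(S)=E$), the image of $\Phi$ is precisely the collection of full inverse subsemigroups of $N$ that are saturated by $\psi$, ordered by inclusion.

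The final step is to invoke the correspondence theorem for the quotient map $\pi\colon N\to N/\psi$, which is available because $\psi$ is a two-sided congruence on $N$ by \cref{PR3}. The assignments $U\mapsto\pi(U)$ and $V\mapsto\pi^{-1}(V)$ are mutually inverse, inclusion-preserving bijections between the $\psi$-saturated subsemigroups of $N$ and the subsemigroups of $N/\psi$ (saturation is exactly the condition making $\pi^{-1}(\pi(U))=U$). As $\pi$ is a homomorphism of inverse semigroups we have $\pi(a^{-1})=\pi(a)^{-1}$, so both assignments carry inverse subsemigroups to inverse subsemigroups. Fullness is also preserved, since $E(N/\psi)=\pi(E(N))$: any idempotent $\overline{e}=\pi(a)$ of $N/\psi$ satisfies $\overline{e}=\pi(a)^{-1}\pi(a)=\pi(a^{-1}a)$ with $a^{-1}a\in E(N)$. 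Hence $\pi$ restricts to a lattice isomorphism from the $\psi$-saturated full inverse subsemigroups of $N$ onto $\mathfrak{V}(N/\psi)$.

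Composing the two isomorphisms, $\rho\mapsto\pi(\ink/(\rho))$ is the desired lattice isomorphism from $\mathcal{L}_\tau$ onto the lattice of full inverse subsemigroups of $N/\psi$. I expect the main point requiring care to be the restriction of the correspondence theorem to \emph{full inverse} subsemigroups, namely the verification that $E(N/\psi)=\pi(E(N))$ and that saturation guarantees no collapse under $\pi$, so that fullness and the inverse property transfer in both directions; everything else is bookkeeping assembled from \cref{biggy}, \cref{sat lem}, and \cref{PR3}.
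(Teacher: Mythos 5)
Your proposal is correct and follows essentially the same route as the paper: identify the trace class with the $\psi$-saturated full inverse subsemigroups of $N$ via \cref{sat lem} (together with the inverse-kernel--trace correspondence), then apply the correspondence theorem for the quotient $N\to N/\psi$. The paper compresses your second and third steps into the phrase ``standard universal arguments''; your version merely fills in those details, including the check that fullness and the inverse property transfer, which the paper leaves implicit.
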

\begin{proof}
We know that the set of full inverse subsemigroups of $N$ that form an inverse congruence pair with $\tau$ consists precisely of those saturated by $\psi.$  Therefore to complete the proof it suffices to note that by standard universal arguments this set is exactly the pre-image under the natural homomorphism $N\mapsto N/\psi$ of the set of full inverse subsemigroups of $N/\psi.$
\end{proof}

Descriptions of the maximum and minimum elements in a trace interval are available in both \cite{Meakin} and \cite{PetrichRankin}.  Let $\tau$ be a congruence on $E,$ we have noted that the inverse kernel of the minimum left congruence with trace $\tau$ is $C(\tau).$  We also note that $(\tau,N(\tau))$ is certainly an inverse congruence pair.  In terms of the inverse kernel trace description we get the following description for the maximum and minimum elements in a trace class.

\begin{corr}
Let $\tau$ be a congruence on $E(S).$  The minimum and maximum left congruences with trace $\tau$ are respectively:
\begin{gather*}
\rho_{(\tau,C(\tau))} = \{ (x,y)\ |\ x^{-1}y\in C(\tau),\ x^{-1}yy^{-1}x\ \tau\ x^{-1}x,\ y^{-1}xx^{-1}y\ \tau\ y^{-1}y \}\\
\rho_{(\tau,N(\tau))} = \{ (x,y)\ |\ x^{-1}y\in N(\tau),\ x^{-1}yy^{-1}x\ \tau\ x^{-1}x,\ y^{-1}xx^{-1}y\ \tau\ y^{-1}y \}
\end{gather*}

\end{corr}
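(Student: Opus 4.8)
The plan is to observe that the two displayed relations are nothing but the relation $\rho_{(\tau,T)}$ from \cref{biggy} instantiated at $T=C(\tau)$ and $T=N(\tau)$ respectively. Hence the whole content of the corollary is to identify these two left congruences with the minimum $\nu_\tau$ and maximum $\mu$ of the trace class already supplied by \cref{M1}. Once that identification is made, writing out the definition of $\rho_{(\tau,T)}$ with $T=C(\tau)$ and $T=N(\tau)$ recovers the two displays verbatim.

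For the minimum I would start from $\nu=\nu_\tau$, the minimum left congruence with trace $\tau$ given by \cref{M1}, together with the identity $C(\tau)=\ink/(\nu_\tau)$ recorded when the centraliser was introduced (and reproved after \cref{PR3}). Since $\tra/(\nu)=\tau$ and $\ink/(\nu)=C(\tau)$, the converse half of \cref{biggy} guarantees simultaneously that $(\tau,C(\tau))$ is an inverse congruence pair and that $\nu=\rho_{(\tra/(\nu),\ink/(\nu))}=\rho_{(\tau,C(\tau))}$. As $\nu$ is by construction the minimum element of the trace class, this yields the first formula.

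For the maximum I would argue dually from $\mu$, the maximum left congruence with trace $\tau$ of \cref{M1}. By \cref{inkprop1} its inverse kernel is exactly $N(\tau)$, and again $\tra/(\mu)=\tau$, so \cref{biggy} gives that $(\tau,N(\tau))$ is an inverse congruence pair and $\mu=\rho_{(\tau,N(\tau))}$, producing the second formula.

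There is no real obstacle here; the statement is a repackaging of \cref{M1}, the identifications $\ink/(\nu_\tau)=C(\tau)$ and $\ink/(\mu)=N(\tau)$, and the bijective correspondence of \cref{biggy}. The only point needing a moment's care is that $\rho_{(\tau,C(\tau))}$ and $\rho_{(\tau,N(\tau))}$ are a priori defined only once $(\tau,C(\tau))$ and $(\tau,N(\tau))$ are known to be inverse congruence pairs; both verifications are already available, the former as a byproduct of applying the converse part of \cref{biggy} to $\nu_\tau$ and the latter noted explicitly in the text preceding the corollary. Alternatively, one could phrase the argument purely order-theoretically: every inverse kernel of a left congruence with trace $\tau$ lies between $C(\tau)$ and $N(\tau)$, and since by \cref{order lem} congruences of a fixed trace are ordered by inclusion of their inverse kernels, the two extremal inverse kernels $C(\tau)$ and $N(\tau)$ must correspond to the minimum and maximum congruences of the class.
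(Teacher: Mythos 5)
Your proposal is correct and follows essentially the same route the paper intends: the corollary is stated without a separate proof precisely because it is the combination of $\ink/(\nu_\tau)=C(\tau)$ (established after \cref{PR3}), $\ink/(\mu)=N(\tau)$ (\cref{inkprop1}), and the reconstruction $\rho=\rho_{(\tra/(\rho),\ink/(\rho))}$ from \cref{biggy}, exactly as you argue. Your closing remark about the pairs needing to be inverse congruence pairs before $\rho_{(\tau,T)}$ is defined, and the order-theoretic alternative via \cref{order lem}, are both consistent with the paper's surrounding remarks and add nothing that conflicts with them.
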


\section{Inverse Kernel Classes}

We now seek to give analogues for inverse kernel classes of results concerning trace classes.  We know that given a full inverse subsemigroup there is an idempotent separating left congruence such that this subsemigroup is the inverse kernel and that this is the minimum element in the inverse kernel class.  Unfortunately though in general there is no maximum element in an inverse kernel class. 

\begin{figure}[!ht]
\centering
\begin{minipage}{.5\textwidth}
  \centering
\begin{tikzpicture}[scale=.5,-,auto,thick,main node/.style={circle},normal node/.style={circle,fill=black,inner sep=0pt,minimum size=5pt}]

\node[normal node] (a1) at (7,18){};
\node[normal node] (a2) at (6,17){};
\node[normal node] (a3) at (8,17){};
\node[normal node] (a4) at (7,16){};

\node[normal node] (b1) at (1,13){};
\node[normal node] (b2) at (0,12){};
\node[normal node] (b3) at (2,12){};
\node[normal node] (b4) at (1,11){};

\node[normal node] (c1) at (7,13){};
\node[normal node] (c2) at (6,12){};
\node[normal node] (c3) at (8,12){};
\node[normal node] (c4) at (7,11){};

\node[normal node] (d1) at (13,13){};
\node[normal node] (d2) at (12,12){};
\node[normal node] (d3) at (14,12){};
\node[normal node] (d4) at (13,11){};

\node[normal node] (e1) at (4,8){};
\node[normal node] (e2) at (3,7){};
\node[normal node] (e3) at (5,7){};
\node[normal node] (e4) at (4,6){};

\node[normal node] (f1) at (10,8){};
\node[normal node] (f2) at (9,7){};
\node[normal node] (f3) at (11,7){};
\node[normal node] (f4) at (10,6){};

\node[normal node,label={\small $I_{1,2}$}] (g1) at (7,3){};
\node[normal node,label={\small $I_{1}$}] (g2) at (6,2){};
\node[normal node,label={\small $I_{2}$}] (g3) at (8,2){};
\node[normal node,label={\small $I_{\emptyset}$}] (g4) at (7,1){};

\draw[rotate around={45:(9.5,6.5)}] (9.5,6.5) ellipse (15pt and 30pt);
\draw[rotate around={-45:(4.5,6.5)}] (4.5,6.5) ellipse (15pt and 30pt);
\draw[rotate around={45:(13.5,12.5)}] (13.5,12.5) ellipse (15pt and 30pt);
\draw[rotate around={45:(12.5,11.5)}] (12.5,11.5) ellipse (15pt and 30pt);
\draw[rotate around={-45:(0.5,12.5)}] (0.5,12.5) ellipse (15pt and 30pt);
\draw[rotate around={-45:(1.5,11.5)}] (1.5,11.5) ellipse (15pt and 30pt);
\draw[rounded corners] (7,18.5)--(8.5,17)--(7,15.5)--(5.5,17)--cycle;
\draw[rounded corners] (5.5,12)--(6,12.5)--(8,12.5)--(8.5,12)--(7,10.5)--cycle;

\draw (6.2,3.2)--(4.6,5.7);
\draw (7.8,3.2)--(9.4,5.7);

\draw (4.6,8.2)--(6.4,10.7);
\draw (9.4,8.2)--(7.6,10.7);
\draw (3.2,8.2)--(1.6,10.7);
\draw (10.8,8.2)--(12.4,10.7);

\draw (1.8,13)--(6,16.1);
\draw (12.2,13)--(8,16.1);
\draw (7,13.5)--(7,15.3);

\end{tikzpicture}
\caption{$\mathfrak{C}(E(\mathcal{I}_2))$}
  \label{congs}
\end{minipage}%
\begin{minipage}{.5\textwidth}
  \centering
  \begin{tikzpicture}[scale=.5,-,auto,thick,main node/.style={circle},normal node/.style={circle,fill=none,inner sep=0pt,minimum size=20pt}]

\node[normal node] (a1) at (1,5){$\mathcal{I}_2$};
\node[normal node] (a2) at (1,3){$T$};
\node[normal node] (a3) at (1,1){$E$};

\path[every node/.style={blue,font=\sffamily\small}]

(a1) edge (a2)
(a2) edge (a3);

\end{tikzpicture}
\caption{$\mathfrak{V}(\mathcal{I}_2)$}
  \label{subs}
\end{minipage}

\begin{minipage}{.45\textwidth}
  \centering
\begin{tikzpicture}[scale=.5,-,auto,thick,main node/.style={circle,double,draw=black,fill=black,inner sep=0pt,minimum size=5pt},normal node/.style={circle,fill=black,inner sep=0pt,minimum size=3pt}]

\node[main node] (a1) at (5,15){};
\node[normal node] (a2) at (5,14){};
\node[normal node] (a3) at (5,13){};

\node[normal node] (b1) at (0,11){};
\node[normal node] (b2) at (0,10){};
\node[main node] (b3) at (0,9){};

\node[main node] (c1) at (5,11){};
\node[main node] (c2) at (5,10){};
\node[normal node] (c3) at (5,9){};

\node[normal node] (d1) at (10,11){};
\node[normal node] (d2) at (10,10){};
\node[main node] (d3) at (10,9){};

\node[normal node] (e1) at (2,7){};
\node[normal node] (e2) at (2,6){};
\node[main node] (e3) at (2,5){};

\node[normal node] (f1) at (8,7){};
\node[normal node] (f2) at (8,6){};
\node[main node] (f3) at (8,5){};

\node[main node] (g1) at (5,3){};
\node[main node] (g2) at (5,2){};
\node[main node] (g3) at (5,1){};

\path[every node/.style={blue,font=\sffamily\small}]

(g1) edge (g2)
(g2) edge (g3)

(g3) edge (f3)
(g3) edge (e3)

(g2) edge[bend right] (c2)
(g1) edge[bend left] (c1)

(f3) edge (d3)
(e3) edge (b3)

(f3) edge (c2)
(e3) edge (c2)

(c2) edge (c1)

(c1) edge[bend right] (a1)
(b3) edge (a1)
(d3) edge (a1);

\end{tikzpicture}
\caption{$\mathfrak{LC}(\mathcal{I}_2)\subseteq \mathfrak{C}(E)\times \mathfrak{V}(S)$}
\label{lcons}
\end{minipage}
\begin{minipage}{.45\textwidth}
  \centering
\begin{tikzpicture}[scale=.5,-,auto,thick,main node/.style={circle,double,draw=black,fill=black,inner sep=0pt,minimum size=5pt},normal node/.style={circle,fill=black,inner sep=0pt,minimum size=3pt}]

\node[main node] (a1) at   (5,15){};
\node[normal node] (b1) at (1,14){};
\node[main node] (c1) at   (5,14){};
\node[normal node] (d1) at (9,14){};
\node[normal node] (e1) at (3,13){};
\node[normal node] (f1) at (7,13){};
\node[main node] (g1) at   (5,12){};

\node[normal node] (a2) at (5,9){};
\node[normal node] (b2) at (1,8){};
\node[main node] (c2) at   (5,8){};
\node[normal node] (d2) at (9,8){};
\node[normal node] (e2) at (3,7){};
\node[normal node] (f2) at (7,7){};
\node[main node] (g2) at   (5,6){};

\node[normal node] (a3) at (5,3){};
\node[main node] (b3) at   (1,2){};
\node[normal node] (c3) at (5,2){};
\node[main node] (d3) at   (9,2){};
\node[main node] (e3) at   (3,1){};
\node[main node] (f3) at   (7,1){};
\node[main node] (g3) at   (5,0){};

\path[every node/.style={blue,font=\sffamily\small}]

(g1) edge[bend left] (g2)
(g2) edge[bend left] (g3)

(g3) edge (f3)
(g3) edge (e3)

(g2) edge (c2)
(g1) edge (c1)

(f3) edge (d3)
(e3) edge (b3)

(f3) edge (c2)
(e3) edge (c2)

(c2) edge[bend left] (c1)

(c1) edge (a1)
(b3) edge (a1)
(d3) edge (a1);

\end{tikzpicture}
\caption{$\mathfrak{LC}(\mathcal{I}_2)\subseteq \mathfrak{V}(S)\times \mathfrak{C}(E)$}
\label{lci22}
\end{minipage}
\end{figure}
To illustrate this we give a simple example of the use of the inverse kernel trace description of the lattice of left congruences.  We consider $\mathcal{I}_2$ the symmetric inverse monoid on a $2$ element set.  We label the elements of $\mathcal{I}_n$ by $I_{1,2},I_1,I_2,I_\emptyset,\alpha,\beta,\beta^{-1}$ where $I_X$ is the idempotent with domain $X\subseteq \{1,2\},$ $\alpha$ is the non identity invertible element, and $\beta$ has domain $\{1\}$ and image $\{2\}.$  $\mathcal{I}_2$ has $3$ distinct full inverse subsemigroups: $E=E(\mathcal{I}_2), \mathcal{I}_2$ and $T=E\cup\{\beta,\beta^{-1}\}.$  The semilattice of idempotents is isomorphic to the powerset of a $2$ element set under intersection.  The lattice of congruences on the idempotents is illustrated in \cref{congs}, in which the partitions of the idempotents are shown.  The lattice of full inverse subsemigroups is displayed in\cref{subs}.

The lattice of left congruences is then realised as a subset of the direct product of these two lattices.  After elementary calculations to determine which pairs are inverse congruence pairs we obtain the lattice of left congruences as shown in \cref{lcons} and \cref{lci22}, which both show the direct product $\mathfrak{C}(E)\times\mathfrak{V}(\mathcal{I}_2)$ with the inverse congruence pairs indicated by circled vertices. \cref{lcons} shows the left congruences grouped into trace classes, and \cref{lci22} shows them grouped by inverse kernel.  It is then easy to observe that the inverse kernel class of $E$ contains no maximum elements.

\begin{lemma}\label{normlem}
Let $\{\tau_i\ |\ i\in I\}$ be a set of congruences on $E$ with normalisers $N_i.$ respectively.  Then
\begin{gather*}
N(\bigcap_{i\in I}\tau_i) \supseteq \bigcap_{i\in I} N_i;\\
N(\bigvee_{i\in I}\tau_i)\supseteq \bigcap_{i\in I} N_i.
\end{gather*}
\end{lemma}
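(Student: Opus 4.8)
The plan is to argue both inclusions directly from the definition of the normaliser, namely that $a\in N(\tau)$ precisely when, for all idempotents $e\ \tau\ f$, both conjugates satisfy $a^{-1}ea\ \tau\ a^{-1}fa$ and $aea^{-1}\ \tau\ afa^{-1}$. In each inclusion I would fix an element $a\in\bigcap_{i\in I}N_i$, so that $a$ normalises every $\tau_i$ individually, and then show that it normalises the meet (respectively the join).

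For the first inclusion, set $\sigma=\bigcap_{i\in I}\tau_i$. Here I would simply take a $\sigma$-related pair $e,f$, note that $e\ \tau_i\ f$ for every $i$, apply $a\in N_i$ to obtain the conjugated pairs $a^{-1}ea\ \tau_i\ a^{-1}fa$ and $aea^{-1}\ \tau_i\ afa^{-1}$ for each $i$, and conclude the conjugates are $\sigma$-related since they lie in every $\tau_i$. This is immediate once the definitions are unwound.

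For the second inclusion, set $\sigma=\bigvee_{i\in I}\tau_i$. The step I would want to establish first is that this join coincides with the transitive closure of $\bigcup_{i\in I}\tau_i$: I would check that this transitive closure is an equivalence relation compatible with the semilattice meet on $E$ (because each $\tau_i$ is), hence already the smallest congruence containing all the $\tau_i$. Granting this, every $\sigma$-related pair $e,f$ is linked by a finite chain $e=g_0\ \tau_{i_1}\ g_1\ \cdots\ \tau_{i_n}\ g_n=f$. I would then conjugate link-by-link, using $a\in N_{i_j}$ at the $j$-th step to keep each link inside $\tau_{i_j}\subseteq\sigma$, and reassemble by transitivity of $\sigma$ to get $a^{-1}ea\ \sigma\ a^{-1}fa$; the right-handed condition is handled symmetrically.

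The one place calling for care, and the natural candidate for the \emph{main obstacle}, is the identification of $\bigvee_{i\in I}\tau_i$ with the transitive closure of the union: this is what reduces an arbitrary join-related pair to a finite chain of single-$\tau_i$ links, which is the only form in which the hypothesis $a\in N_i$ can be applied one link at a time. Once that is in hand everything is routine, and no separate treatment of infinite $I$ is needed, since the transitive closure involves only finite chains.
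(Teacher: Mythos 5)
Your proof is correct and follows essentially the same route as the paper: the intersection case is handled by directly unwinding the definition, and the join case by writing a $\bigvee_{i\in I}\tau_i$-related pair as a finite chain of single-$\tau_{i_j}$ links and conjugating link by link. The only difference is that you explicitly justify the identification of the join with the transitive closure of the union, a standard fact about congruences on a semilattice that the paper simply takes for granted.
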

\begin{proof}
The first part is straightforward: suppose $a\in \bigcap_{i\in I} N_i$ and $ e\ (\bigcap_{i\in I} \tau_i)\ f.$  Then we have $aea^{-1}\ \tau_i\ afa^{-1}$ for each $i\in I,$ so $aea^{-1}\ (\bigcap_{i\in I} \tau_i)\ afa^{-1}.$  Similarly we obtain $a^{-1}ea\ (\bigcap_{i\in I} \tau_i)\ a^{-1}fa,$ so $a\in N(\bigcap_{i\in I} \tau_i).$

For the second claim we suppose that $e\ (\bigvee_{i\in I} \tau_i)\ f$ and $a\in \bigcap_{i\in I} N_i.$  As $e\ (\bigvee_{i\in I} \tau_i)\ f$ there is some sequence $\{g_1,\dots,g_m\}\subseteq E$ such that
$$ e\ \tau_{i_1}\ g_1\ \tau_{i_2}\ g_2\ \dots\ g_{m-1}\ \tau_{i_{m-1}}\ g_m\ \tau_{i_m}\ f.$$

At each stage in the chain we can conjugate by $a$ and thus obtain: 
$$aea^{-1}\ (\bigvee_{i\in I} \tau_i)\ afa^{-1}.$$  
Then by symmetry in $a,a^{-1}$ we obtain $a\in N(\bigvee_{i\in I} \tau_i).$
\end{proof}

We now give a description of the set of congruences on $E$ that are the traces of an inverse kernel class.  The following is a rewording of \cref{lcpdeff}, in which $T$ is fixed.

\begin{corr}\label{inklat}
Let $T$ be a full inverse subsemigroup of $S.$  For a congruence $\tau$ on $E=E(T)=E(S)$ we have that $(\tau,T)$ is an inverse congruence pair if and only if $\tau$ is normal in $T$ and for each $a\in S\backslash T$ and $e\in E$ with $ae\in T,$ if $a^{-1}a\ \tau\ e$ then $aa^{-1}\ \not\!\tau\ aea^{-1}.$  
\end{corr}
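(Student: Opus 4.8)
The plan is to read off \cref{lcpdeff} with the full inverse subsemigroup $T$ held fixed, identifying its first clause (D1) with the statement that $\tau$ is normal in $T$, and then showing that, once (D1) is assumed, the second clause (D2) is equivalent to the displayed implication about $a\in S\setminus T$. For the first clause there is nothing to do beyond an observation: since $T$ is a full inverse subsemigroup, the inclusion $T\subseteq N(\tau)$ says exactly that every element of $T$ normalises $\tau$ (equivalently, that $T$ lies in $N_L(\tau)$, since $T$ is closed under inverses), which is what ``$\tau$ is normal in $T$'' means. Thus I would treat (D1) and the first conjunct of the corollary as literally the same condition, and concentrate on proving that (D2) and the displayed condition are interchangeable given (D1).

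For the forward implication I would assume $(\tau,T)$ is an inverse congruence pair and argue the displayed condition by contraposition. Take $a\in S\setminus T$ and $e\in E$ with $ae\in T$ and $a^{-1}a\ \tau\ e$, and suppose toward a contradiction that $aa^{-1}\ \tau\ aea^{-1}$. Set $f=aea^{-1}$, which is an idempotent. Then $fa=aea^{-1}a=(ae)(a^{-1}a)\in T$, because $a^{-1}a\in E\subseteq T$ and $T$ is a subsemigroup, while $ae\in T$ by hypothesis; together with $a^{-1}a\ \tau\ e$ and $aa^{-1}\ \tau\ f$ this is precisely the hypothesis of (D2) for $x=a$, forcing $a\in T$, a contradiction. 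Notice this direction uses only the semigroup closure of $T$, not (D1).

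For the reverse implication I would assume $\tau$ is normal in $T$ together with the displayed condition and verify (D2). Take $x\in S$ and $e,f\in E$ with $x^{-1}x\ \tau\ e$, $xx^{-1}\ \tau\ f$ and $xe,fx\in T$, and suppose $x\notin T$. The first step is to show $x\in N(\tau)$, which proceeds exactly as in the converse part of \cref{sat lem}: conjugating an arbitrary $\tau$-related pair by $fx\in T\subseteq N(\tau)$ and using $xx^{-1}\ \tau\ f$ yields the right-normaliser condition, and conjugating by $xe\in T\subseteq N(\tau)$ and using $x^{-1}x\ \tau\ e$ yields the left-normaliser condition. Once $x\in N(\tau)$, conjugating the relation $x^{-1}x\ \tau\ e$ by $x$ gives $xx^{-1}=x(x^{-1}x)x^{-1}\ \tau\ xex^{-1}$. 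But applying the displayed condition with $a=x$ (legitimate since $x\in S\setminus T$, $xe\in T$ and $x^{-1}x\ \tau\ e$) gives $xx^{-1}\ \not\!\tau\ xex^{-1}$, a contradiction; hence $x\in T$ and (D2) holds.

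The delicate point, and the only place I expect real work, is the mismatch in shape between (D2), which posits the two memberships $xe\in T$ and $fx\in T$, and the displayed condition, which mentions only the single membership $ae\in T$. This is reconciled asymmetrically: in the forward direction the second membership $fa\in T$ comes for free from closure of $T$ under multiplication by the idempotent $a^{-1}a$, whereas in the reverse direction the membership $fx\in T$ is needed solely to place $x$ in $N(\tau)$, after which the single membership $xe\in T$ together with the normaliser conjugation $xx^{-1}\ \tau\ xex^{-1}$ suffices to contradict the displayed hypothesis. Establishing $x\in N(\tau)$ is the only computational step, and since it is identical to the normaliser argument already carried out in \cref{sat lem}, I would import it rather than reprove it.
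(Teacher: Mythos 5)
Your proof is correct; the paper offers no argument at all for this corollary (it is presented as a mere rewording of \cref{lcpdeff}), and your verification supplies exactly the bookkeeping that is being left to the reader. In particular you correctly identify the one non-trivial point --- recovering the second membership $fx\in T$ for (D2) in one direction from closure under multiplication by $a^{-1}a$, and in the other direction using it only to place $x$ in the normaliser via the computation already done in \cref{sat lem} --- so nothing is missing.
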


A partial order is said to have the descending chain condition if it contains no infinite descending chains.  Any partially ordered set with the descending chain condition has minimal elements; if a meet-semilattice has the descending chain condition then we note that it contains a minimum element.  Let $\tau$ be a congruence on a semilattice $E.$  Since a congruence class is a subsemilattice we note that if a semilattice has the descending chain condition then each $\tau$-class has a minimum element.  In particular when $E$ is finite $E$ certainly has the descending chain condition.  We observe that when $E$ has the descending chain condition then the usual partial order on $S$ also has the descending chain condition .

\begin{deff}
Let $T\subseteq S$ be a full inverse subsemigroup.  Say $a\in S\backslash T$ is minimal (with respect to T) if $\{ ae\ |\ e\in E\} \subseteq T\cup \{a\}.$  Equivalently $a\in S\backslash T$ is minimal if $b\leq a$ implies $b=a$ or $b\in T.$
\end{deff}

\begin{lemma}\label{dcclem}
Let $S$ be an inverse semigroup such that $E$ has the descending chain condition, and let $T\subseteq S$ be a full inverse subsemigroup.  For each $a\in S\backslash T$ there exists $b\in S\backslash T$ such that $b\leq a$ and $b$ is minimal with respect to $T.$ 
\end{lemma}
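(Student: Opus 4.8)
The plan is to realise the desired $b$ as a minimal element, in the natural partial order on $S$, of the set
$$D=\{c\in S\backslash T\ |\ c\leq a\},$$
and then to show that minimality of $b$ inside $D$ is precisely minimality of $b$ with respect to $T$. First I would note that $D$ is nonempty, since $a\in S\backslash T$ and $a\leq a$, so $a\in D$.

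The one genuine preliminary is that the natural partial order on $S$ inherits the descending chain condition from $E$ (this is exactly the remark made just before the statement, which I would make precise). For this I would use the characterisation $c\leq a\iff c=cc^{-1}a$: if $c\leq a$ and $cc^{-1}=aa^{-1}$, then $c=cc^{-1}a=aa^{-1}a=a$. Hence a strict inequality $c<a$ forces $cc^{-1}<aa^{-1}$ strictly in $E$, so the map $x\mapsto xx^{-1}$ carries any strictly descending chain in $S$ to a strictly descending chain in $E$. Since $E$ has the descending chain condition, $S$ can contain no infinite strictly descending chain, and therefore every nonempty subset of $S$ (in particular $D$) has a minimal element.

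Let $b$ be a minimal element of $D$, so $b\in S\backslash T$, $b\leq a$, and there is no $c\in D$ with $c<b$. It remains to verify that $b$ is minimal with respect to $T$ in the sense of the definition, i.e. that $c\leq b$ implies $c=b$ or $c\in T$. So suppose $c\leq b$ with $c\neq b$, and suppose toward a contradiction that $c\notin T$. Then $c\in S\backslash T$, and by transitivity of $\leq$ the relations $c\leq b\leq a$ give $c\leq a$, so $c\in D$; but $c<b$ then contradicts the minimality of $b$ in $D$. Hence $c\in T$, and $b$ is minimal with respect to $T$ and satisfies $b\leq a$, as required.

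The argument is short, and its content lies entirely in the translation between the two notions of minimality; the only step needing care is transferring the descending chain condition from $E$ to $S$, which the computation $c=cc^{-1}a=a$ settles, so I do not anticipate a real obstacle.
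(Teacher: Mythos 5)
Your proof is correct and takes essentially the same route as the paper: the paper picks a minimal element of $X=\{ae \mid e\in E,\ ae\notin T\}$, which is exactly your set $D=\{c\in S\backslash T \mid c\leq a\}$ since the elements below $a$ in the natural partial order are precisely those of the form $ae$ with $e\in E$. The only difference is that you explicitly justify the transfer of the descending chain condition from $E$ to $S$ via $x\mapsto xx^{-1}$, which the paper states as an unproved remark just before the lemma.
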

\begin{proof}
Suppose $a\in S\backslash T,$ with $a$ not minimal with respect to $T.$  Let $X=\{ae\ |\ e\in E,\ ae\notin T\}.$  Since the partial order on $S$ has the descending chain condition it is clear that $X$ also has the descending chain condition.  Let $g\in E$ be such that $ag$ is minimal in $X.$  Let $b=ag,$ then $b\leq a$ and $b\notin T.$  Also for any $e\in E$ we have $be=age \leq ag,$ so either $be=ag=b,$ or $be=age<ag$ whence $be\in T.$  Hence $b$ is minimal with respect to $T.$
\end{proof}

\begin{prop}
Let $S$ be a inverse semigroup with the descending chain condition, and let $T\subseteq S$ be a full inverse subsemigroup.  If $\tau$ is a congruence on $E(S)$ with $T\subseteq N(\tau),$ then $(\tau,T)$ is an inverse congruence pair if and only if for each $a\in S\backslash T$ with $a$ minimal at least one of $aa^{-1},\ a^{-1}a$ is the minimum in its $\tau$-equivalence class.
\end{prop}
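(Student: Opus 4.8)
The plan is to prove both implications in contrapositive form, working directly with condition \ref{lcpdeff2} of \cref{lcpdeff}; note that (D1), namely $T\subseteq N(\tau)$, is assumed throughout, so an inverse congruence pair is exactly a $(\tau,T)$ for which \ref{lcpdeff2} holds. The key elementary fact, valid because $E$ has the descending chain condition, is that an idempotent $g$ \emph{fails} to be the minimum of its $\tau$-class precisely when there is an idempotent $g'<g$ with $g'\ \tau\ g$: indeed, if $g$ is not the minimum there is $k\ \tau\ g$ with $g\not\le k$, and then $g'=gk$ satisfies $g'<g$ and $g'\ \tau\ gg=g$. Consequently the stated condition fails exactly when there is a minimal $a\in S\backslash T$ for which \emph{both} $a^{-1}a$ and $aa^{-1}$ admit strictly smaller $\tau$-related idempotents.

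For the forward implication I would argue the contrapositive. Suppose $a\in S\backslash T$ is minimal with both $a^{-1}a$ and $aa^{-1}$ non-minimal, and choose $e<a^{-1}a$ with $e\ \tau\ a^{-1}a$ and $f<aa^{-1}$ with $f\ \tau\ aa^{-1}$. Since $e<a^{-1}a$ we have $ae\ne a$, so minimality of $a$ forces $ae\in T$; dually $fa\le a$ with $fa\ne a$ gives $fa\in T$. Thus $x=a$ satisfies $x^{-1}x\ \tau\ e$, $xx^{-1}\ \tau\ f$ and $xe,fx\in T$, yet $a\notin T$, so \ref{lcpdeff2} fails and $(\tau,T)$ is not an inverse congruence pair.

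For the converse I again argue contrapositively. If $(\tau,T)$ is not an inverse congruence pair then, as (D1) holds, \ref{lcpdeff2} must fail, giving $x\in S\backslash T$ and $e,f\in E$ with $x^{-1}x\ \tau\ e$, $xx^{-1}\ \tau\ f$ and $xe,fx\in T$. Using \cref{dcclem} I descend to a minimal $b\in S\backslash T$ with $b\le x$, and write $h=b^{-1}b\le x^{-1}x$, so that $bb^{-1}=xhx^{-1}\le xx^{-1}$. I then show both $h$ and $xhx^{-1}$ are non-minimal, so that $b$ witnesses the failure of the stated condition. For $h$: the meet $he$ satisfies $he\ \tau\ h(x^{-1}x)=h$ because $e\ \tau\ x^{-1}x$ and $h\le x^{-1}x$, and $he<h$ because $h\le e$ would give $b=xh=(xe)h\in T$. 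For $xhx^{-1}$: the meet $(xhx^{-1})f$ is $\tau$-related to $(xhx^{-1})(xx^{-1})=xhx^{-1}$ since $f\ \tau\ xx^{-1}\ge xhx^{-1}$, and it is strictly smaller because $f\ge bb^{-1}$ would give $b=fb=(fx)h\in T$. Hence $b$ is a minimal element of $S\backslash T$ with both $b^{-1}b$ and $bb^{-1}$ non-minimal.

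The one genuine obstacle is this descent in the converse: a priori, passing from $x$ to a minimal $b\le x$ could destroy the non-minimality of the two idempotents, since the domain and range idempotents shrink from $x^{-1}x,xx^{-1}$ to $h,xhx^{-1}$. What rescues the argument is precisely the membership $xe,fx\in T$, which forces the strict inequalities $h\not\le e$ and $f\not\ge bb^{-1}$ (otherwise $b$ would lie in the full subsemigroup $T$); these are exactly what keep $b^{-1}b$ and $bb^{-1}$ strictly above the $\tau$-related idempotents $he$ and $(xhx^{-1})f$. Notably, both witnesses are obtained by meeting with idempotents below $x^{-1}x$ or $xx^{-1}$, so no use of the normaliser beyond the standing hypothesis $T\subseteq N(\tau)$ is needed.
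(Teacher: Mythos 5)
Your proof is correct and takes essentially the same route as the paper's: the forward direction uses minimality of $a$ to force $ae,fa\in T$ and contradict \ref{lcpdeff2}, and the converse descends via \cref{dcclem} to a minimal $b\leq x$ and uses $xe,fx\in T$ to rule out either of $b^{-1}b,\ bb^{-1}$ being its $\tau$-class minimum. The only difference is presentational — you argue the converse contrapositively with explicit witnesses $he$ and $(xhx^{-1})f$, where the paper derives the contradictions $b=be\in T$ and $b=fb\in T$ directly — and both versions rest on the same computations.
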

\begin{proof}
Suppose that $a\in S\backslash T$ is minimal, then we immediately obtain that $a^{-1}$ is also minimal.  We also observe that if $e< a^{-1}a$ then $ae< a.$  We initially assume that $(\tau,T)$ is an inverse congruence pair.  Suppose that both $a^{-1}a,\ aa^{-1}$ are not minimum in their $\tau$-class.  Then have $e< a^{-1}a,\ f< aa^{-1},$ with $e\ \tau\ a^{-1}a,\ f\ \tau\ aa^{-1}.$  Then as $a,a^{-1}$ are minimal we get $ae,a^{-1}f\in T.$  But since $(\tau,T)$ is an inverse congruence pair \cref{lcpdeff} \ref{lcpdeff2} gives that $a\in T$ which is a contradiction.

Conversely suppose that the latter condition holds.  We need to verify \ref{lcpdeff2}.  Let $a\in S$ and suppose there are $e,f\in E$ such that $e\ \tau\ a^{-1}a,\ f\ \tau\ aa^{-1}$ and $ae,fa\in T.$  Suppose that $a\notin T;$ we aim for a contradiction.  

Since $S$ satisfies the descending chain condition by \cref{dcclem} we have that there is some $h\in E$ such that $b=ah$ and $b$ is minimal with respect to $T.$  By assumption at least one of $bb^{-1}$ or $b^{-1}b$ is it's $\tau$-class minimum.  If $b^{-1}b$ is a minimum, then 
$b^{-1}b=b^{-1}ba^{-1}a\ \tau\ b^{-1}be$ so that $b^{-1}b=b^{-1}be$ and $b=be.$  Then $b=be=aeh\in T,$ a contradiction.  Similarly $bb^{-1}$ cannot be minimum it its $\tau$-class.  It follows that $a\in T$ and \ref{lcpdeff2} holds.
\end{proof}

\section{Trace and Inverse Kernel maps}\label{mapsection}

We now consider the lattice of left congruences on $S.$  We show that like the trace map and unlike the kernel map, the inverse kernel map is a $\cap$-homomorphism and shall describe the meets and joins of left congruences in terms of the trace and inverse kernel.  

Recall that given a trace $\tau$ we have that $\nu_\tau=\langle\tau\rangle.$  Therefore given traces $\tau_1,\tau_2$ we have $\nu_{\tau_1}\vee\nu_{\tau_2}=\langle \tau_1\vee\tau_2\rangle.$  

\begin{corr}
The map $\tau\mapsto\nu_\tau$ is a lattice embedding $\mathfrak{C}(E)\hookrightarrow\mathfrak{LC}(S).$
\end{corr}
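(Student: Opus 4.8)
The plan is to check the three defining properties of a lattice embedding in turn: injectivity, preservation of joins, and preservation of meets. Throughout I write $\phi$ for the map $\tau\mapsto\nu_\tau$ and use freely that $\tra/(\nu_\tau)=\tau$ and $\nu_\tau=\langle\tau\rangle$ from \cref{M1} and the remark after it. Injectivity is then immediate: if $\nu_{\tau_1}=\nu_{\tau_2}$ then applying the trace map gives $\tau_1=\tra/(\nu_{\tau_1})=\tra/(\nu_{\tau_2})=\tau_2$. The same observation makes $\phi$ order-reflecting, and since $\tau_1\subseteq\tau_2$ forces $\langle\tau_1\rangle\subseteq\langle\tau_2\rangle$ it is order-preserving, so $\phi$ is an order-embedding.

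For joins I would argue exactly as in the sentence preceding the statement. As $\nu_{\tau_i}=\langle\tau_i\rangle$, the join $\nu_{\tau_1}\vee\nu_{\tau_2}$ is the least left congruence containing $\tau_1\cup\tau_2$, namely $\langle\tau_1\cup\tau_2\rangle$. But $\langle\tau_1\cup\tau_2\rangle$ restricts on $E$ to a congruence on $E$ containing $\tau_1\cup\tau_2$, hence containing $\tau_1\vee\tau_2$; therefore $\langle\tau_1\cup\tau_2\rangle=\langle\tau_1\vee\tau_2\rangle=\nu_{\tau_1\vee\tau_2}$, and the same reasoning applies to arbitrary joins.

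The real content is meet-preservation, $\nu_{\tau_1}\cap\nu_{\tau_2}=\nu_{\tau_1\cap\tau_2}$, which I would reduce through \cref{order lem}. The trace map is trivially a $\cap$-homomorphism (restriction to $E$ commutes with intersection), so $\tra/(\nu_{\tau_1}\cap\nu_{\tau_2})=\tau_1\cap\tau_2=\tra/(\nu_{\tau_1\cap\tau_2})$. Directly from \cref{deff3}, $a\in\ink/(\rho_1\cap\rho_2)$ if and only if $a\ \rho_1\ aa^{-1}$ and $a\ \rho_2\ aa^{-1}$, so the inverse kernel map is also a $\cap$-homomorphism; together with $\ink/(\nu_\tau)=C(\tau)$ this gives $\ink/(\nu_{\tau_1}\cap\nu_{\tau_2})=C(\tau_1)\cap C(\tau_2)$, while $\ink/(\nu_{\tau_1\cap\tau_2})=C(\tau_1\cap\tau_2)$. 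Hence \cref{order lem} reduces the entire statement to the equality $C(\tau_1)\cap C(\tau_2)=C(\tau_1\cap\tau_2)$. One inclusion is free: $\nu_{\tau_1\cap\tau_2}\subseteq\nu_{\tau_1}\cap\nu_{\tau_2}$ because the right-hand side has trace $\tau_1\cap\tau_2$ and $\nu_{\tau_1\cap\tau_2}$ is minimum among left congruences with that trace, whence $C(\tau_1\cap\tau_2)\subseteq C(\tau_1)\cap C(\tau_2)$ by order-preservation of $\ink/$.

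The reverse inclusion $C(\tau_1)\cap C(\tau_2)\subseteq C(\tau_1\cap\tau_2)$ is the main obstacle. Unwinding the centraliser, for $a\in C(\tau_1)\cap C(\tau_2)$ one has $a\in N(\tau_1)\cap N(\tau_2)\subseteq N(\tau_1\cap\tau_2)$ (by \cref{normlem}) together with idempotents $e_1,e_2$, which we may take below $a^{-1}a$, satisfying $e_i\ \tau_i\ a^{-1}a$ and $ae_i=e_i$. To conclude $a\in C(\tau_1\cap\tau_2)$ one must exhibit a single idempotent $e$ with $ae=e$ and $e\ (\tau_1\cap\tau_2)\ a^{-1}a$. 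The obvious candidate $e=e_1e_2$ satisfies $ae=e$ and lies in the semilattice $\{g\le a^{-1}a : ag=g\}$, but multiplying the witnessing relations only yields $e\ \tau_1\ e_2$ and $e\ \tau_2\ e_1$, not $e\ \tau_i\ a^{-1}a$. Fusing the two separate witnesses into one common witness is thus the crux; I expect to need the order structure of the $\tau_i$-classes of $a^{-1}a$ within that semilattice, and this is the step where careful attention is required.
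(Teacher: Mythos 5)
Your injectivity and join arguments are correct, and they are exactly what the paper itself relies on: the sentence preceding the corollary gives $\nu_{\tau_1}\vee\nu_{\tau_2}=\langle\tau_1\vee\tau_2\rangle=\nu_{\tau_1\vee\tau_2}$, injectivity follows by applying the trace map, and the paper offers nothing further. So the meet step that you flag as the remaining obstacle is the entire content beyond what the paper establishes, and your reduction of it --- via \cref{order lem}, the fact that trace and inverse kernel are $\cap$-homomorphisms, and $\ink/(\nu_\tau)=C(\tau)$ --- to the single inclusion $C(\tau_1)\cap C(\tau_2)\subseteq C(\tau_1\cap\tau_2)$ is valid.

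That inclusion, however, is false in general, so the step you could not complete cannot be completed. Let $S=\{1,a,x,y,z\}$ be the Clifford semigroup which is the strong semilattice of groups over $E=\{1,x,y,z\}$ (with $1>x,y>z=xy$ and $x,y$ incomparable), where $G_1=\{1,a\}\cong\mathbb{Z}_2$ and the other groups are trivial; thus $a^2=1$ and $ax=x,\ ay=y,\ az=z$. Let $\tau_1$ have classes $\{1,x\},\{y,z\}$ and $\tau_2$ have classes $\{1,y\},\{x,z\}$; both are congruences on $E$ and $\tau_1\cap\tau_2=\iota$. Then $a\ \nu_{\tau_1}\ ax=x\ \nu_{\tau_1}\ 1$ and $a\ \nu_{\tau_2}\ ay=y\ \nu_{\tau_2}\ 1$, so $(a,1)\in\nu_{\tau_1}\cap\nu_{\tau_2}$, whereas $\nu_{\tau_1\cap\tau_2}=\nu_\iota=\iota$. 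Equivalently, $a\in C(\tau_1)\cap C(\tau_2)$ (with the two witnesses $x$ and $y$) but $C(\iota)=E$: the two witnesses genuinely cannot be fused into one, exactly as you suspected. The upshot is that $\tau\mapsto\nu_\tau$ is an injective, order-reflecting $\vee$-homomorphism whose image is a join-subsemilattice, but it is not a $\cap$-homomorphism, so the corollary is only correct with ``lattice embedding'' read in that weaker sense; fortunately only the join property is used later (e.g.\ in \cref{embed thm}). Your proposal is therefore right up to the point where you stop, and the honest flag you raise there marks a real defect in the statement rather than a missing trick.
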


The image of this map is $\{\nu_\tau\ |\ \tau\in \mathfrak{C}(E)\},$ and we refer to this set as the set of trace minimal left congruences.  

It is shown in \cite{PetrichRankin} that the map $\rho\mapsto\tra/(\rho)$ is a $\cap$-homomorphism, so in particular:
$$ \tra/(\rho_1\cap\rho_2)=\tra/(\rho_1)\cap \tra/(\rho_2).$$
We have noted that the kernel map is not in general a $\cap$-homomorphism.  However it is elementary that the inverse kernel map is such.  

\begin{prop}\label{latprop1}
Let $\rho_1=\rho_{(\tau_1,T_1)}$, $\rho_2=\rho_{(\tau_2,T_2)}$ be left congruences on $S.$  Then
$$\ink/(\rho_1\cap\rho_2)=T_1\cap T_2.$$
\begin{proof}
We apply \cref{lem1} to get that $T_i=\{a\ |\ a\ \rho_i\ aa^{-1}\}$  
\begin{align*}
a\in T_1 \text{ and } T_2 & \iff a\ \rho_1\ aa^{-1} \text{ and } a\ \rho_2\ aa^{-1} \\
& \iff a\ (\rho_1\cap\rho_2)\ aa^{-1} \\
& \iff a\in \ink/(\rho_1\cap\rho_2).
\end{align*}
\end{proof}
\end{prop}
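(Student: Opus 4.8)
The plan is to reduce everything to the defining characterization of the inverse kernel, since the substantive work has already been carried out in establishing that characterization. First I would observe that the intersection $\rho_1\cap\rho_2$ is again a left congruence on $S$ — intersections of left congruences are left congruences — so that $\ink/(\rho_1\cap\rho_2)$ is well-defined and, by \cref{deff3}, equals $\{a\ |\ a\ (\rho_1\cap\rho_2)\ aa^{-1}\}$. By \cref{biggy} the hypotheses give $T_i=\ink/(\rho_i)=\{a\ |\ a\ \rho_i\ aa^{-1}\}$ for $i=1,2$, so each $T_i$ is already presented in the ``diagonal'' form supplied by \cref{lem1}.

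The core of the argument is then a chain of elementary equivalences. An element $a$ lies in $T_1\cap T_2$ if and only if both $a\ \rho_1\ aa^{-1}$ and $a\ \rho_2\ aa^{-1}$ hold; this is the case if and only if $a\ (\rho_1\cap\rho_2)\ aa^{-1}$, simply because membership in the intersection of two relations means membership in both; and this in turn holds if and only if $a\in\ink/(\rho_1\cap\rho_2)$. Chaining these equivalences yields the claimed equality of sets.

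I expect there to be no genuine obstacle here: all the conceptual difficulty is packaged into the fact (\cref{deff3} together with \cref{lem1}) that the inverse kernel of any left congruence $\rho$ can be read off pointwise as the set of elements $\rho$-related to $aa^{-1}$. Once the inverse kernel is expressed in this pointwise form, the $\cap$-homomorphism property follows directly from the corresponding property of the underlying relations, with nothing left to verify beyond the elementary fact that an element is related by $\rho_1\cap\rho_2$ precisely when it is related by both $\rho_1$ and $\rho_2$. The only mild care needed is to confirm at the outset that $\rho_1\cap\rho_2$ is itself a left congruence, so that the inverse kernel notation is legitimate, and this is immediate.
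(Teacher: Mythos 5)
Your proposal is correct and follows essentially the same route as the paper: both reduce to the pointwise description $\ink/(\rho)=\{a\mid a\ \rho\ aa^{-1}\}$ and then chain the elementary equivalences through the intersection of the two relations. The only difference is that you explicitly note that $\rho_1\cap\rho_2$ is a left congruence, a point the paper leaves implicit.
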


Since we know that the restriction of the inverse kernel map to the set of idempotent separating left congruences is onto, and the restriction of the trace map to the set of trace minimal left congruences is onto we have shown the following.

\begin{corr}
The trace and kernel maps are onto meet homomorphisms.
\end{corr}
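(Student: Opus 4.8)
The plan is to verify the two required properties separately for each map, since the corollary is an assembly of facts already established. For each of the trace map and the inverse kernel map we must check (a) that it is a meet-homomorphism, and (b) that it is surjective (onto $\mathfrak{C}(E)$ and $\mathfrak{V}(S)$ respectively).

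First I would dispatch the meet-homomorphism property, which requires no new computation. For the trace map this is exactly the $\cap$-homomorphism property of \cite{PetrichRankin} recalled above, namely $\tra/(\rho_1\cap\rho_2)=\tra/(\rho_1)\cap\tra/(\rho_2)$. For the inverse kernel map it is precisely \cref{latprop1}, which gives $\ink/(\rho_1\cap\rho_2)=\ink/(\rho_1)\cap\ink/(\rho_2)$. Hence both maps preserve finite meets.

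Next I would establish surjectivity by exhibiting, for each target element, a preimage drawn from a distinguished family. For the trace map, \cref{M1} produces for every congruence $\tau$ on $E$ the left congruence $\nu_\tau$ with $\tra/(\nu_\tau)=\tau$; thus the trace map is onto $\mathfrak{C}(E)$, and in fact its restriction to the trace minimal left congruences $\{\nu_\tau \mid \tau\in\mathfrak{C}(E)\}$ is already surjective. For the inverse kernel map, \cref{M2} provides for every full inverse subsemigroup $T\in\mathfrak{V}(S)$ the idempotent separating left congruence $\chi_T$; since for an idempotent separating left congruence the kernel and inverse kernel coincide by \cref{lem1}\ref{ink1}, we have $\ink/(\chi_T)=T$, so the inverse kernel map is onto $\mathfrak{V}(S)$, its restriction to the idempotent separating left congruences already being surjective.

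Combining these observations yields the corollary. I do not expect any genuine obstacle: the content is entirely a bookkeeping assembly of the two $\cap$-homomorphism statements (from \cite{PetrichRankin} and \cref{latprop1}) together with the surjectivity witnessed by the families $\nu_\tau$ and $\chi_T$. The only point worth flagging is notational — the ``kernel map'' named in the statement is the inverse kernel map of \cref{mapsection}, since it is the inverse kernel (not the kernel) that was shown to be a meet-homomorphism with surjective restriction to the idempotent separating left congruences.
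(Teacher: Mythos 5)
Your proof is correct and follows essentially the same route as the paper: meet-preservation comes from the Petrich--Rankin result for the trace together with \cref{latprop1} for the inverse kernel, and surjectivity is witnessed by the families $\nu_\tau$ and $\chi_T$. Your closing remark is also right — the corollary's ``kernel map'' must be read as the inverse kernel map, since the paper has already noted that the kernel map itself fails to be a $\cap$-homomorphism.
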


It is then straightforward to determine the trace and inverse kernel of the meet of left congruences.

\begin{corr}
Let $\rho_1=\rho_{(\tau_1,T_1)}$ and $\rho_2=\rho_{(\tau_2,T_2)}$ be left congruences on $S.$  Then
$$\rho_1\cap\rho_2=\rho_{(\tau_1\cap\tau_2,T_1\cap T_2)}.$$
\end{corr}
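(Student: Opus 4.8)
The goal is to show that for left congruences $\rho_1=\rho_{(\tau_1,T_1)}$ and $\rho_2=\rho_{(\tau_2,T_2)}$, the meet satisfies $\rho_1\cap\rho_2=\rho_{(\tau_1\cap\tau_2,T_1\cap T_2)}$. The plan is to exploit the uniqueness half of the inverse kernel trace correspondence established in \cref{biggy} and \cref{order lem}: a left congruence is completely determined by its trace and inverse kernel, so it suffices to compute $\tra/(\rho_1\cap\rho_2)$ and $\ink/(\rho_1\cap\rho_2)$ and check that these equal $\tau_1\cap\tau_2$ and $T_1\cap T_2$ respectively.

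First I would observe that $\rho_1\cap\rho_2$ is itself a left congruence, being the intersection of two left congruences. By \cref{uniquepair} (or equivalently \cref{biggy}), there is a unique inverse congruence pair realising it, namely $(\tra/(\rho_1\cap\rho_2),\ink/(\rho_1\cap\rho_2))$, and $\rho_1\cap\rho_2=\rho_{(\tra/(\rho_1\cap\rho_2),\,\ink/(\rho_1\cap\rho_2))}$. So the entire task reduces to the two coordinate computations. The inverse kernel coordinate is already done: the immediately preceding \cref{latprop1} gives $\ink/(\rho_1\cap\rho_2)=T_1\cap T_2$. For the trace coordinate, I would invoke the fact, recalled in this section from \cite{PetrichRankin}, that the trace map is a $\cap$-homomorphism, so that $\tra/(\rho_1\cap\rho_2)=\tra/(\rho_1)\cap\tra/(\rho_2)=\tau_1\cap\tau_2$, using that $\tra/(\rho_i)=\tau_i$ by \cref{cor4}.

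Putting these together, $\rho_1\cap\rho_2$ has trace $\tau_1\cap\tau_2$ and inverse kernel $T_1\cap T_2$, and hence by the uniqueness in \cref{biggy} it coincides with $\rho_{(\tau_1\cap\tau_2,\,T_1\cap T_2)}$. I would be careful to note in passing that $(\tau_1\cap\tau_2,T_1\cap T_2)$ is genuinely an inverse congruence pair, but this requires no separate verification: it arises as the trace and inverse kernel of the honest left congruence $\rho_1\cap\rho_2$, so \cref{biggy} guarantees the pair condition automatically. There is essentially no obstacle here; the statement is a clean corollary of the two coordinatewise meet-homomorphism results together with the determinacy of a left congruence by its trace and inverse kernel. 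The only point meriting care is simply citing the correct antecedents rather than reproving the $\cap$-homomorphism property of the trace map, which is imported from \cite{PetrichRankin}.
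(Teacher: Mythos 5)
Your proof is correct and follows exactly the route the paper intends for this corollary: combine \cref{latprop1} (the inverse kernel map is a $\cap$-homomorphism) with the trace map being a $\cap$-homomorphism from \cite{PetrichRankin}, then conclude via the determinacy of a left congruence by its trace and inverse kernel (\cref{biggy}, \cref{order lem}). Your added remark that $(\tau_1\cap\tau_2,T_1\cap T_2)$ is automatically an inverse congruence pair, since it is realised by the genuine left congruence $\rho_1\cap\rho_2$, is a worthwhile point of care.
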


It is a non trivial question to determine the join of two left congruences on $S.$ We now show that using the inverse kernel approach provides a mechanism to handle this problem smoothly.

\begin{lemma}\label{joinlem}
Let $\tau$ be a congruence on $E,$ and $\psi=\nu_\tau\cap(N(\tau)\times N(\tau)).$  Let $T\subseteq N(\tau)$ be a full inverse subsemigroup.  Let $V=\bigcup_{t\in T} [t]_\psi.$  Then $(\tau,V)$ is an inverse congruence pair.
\begin{proof}
We first show that $V$ is a full inverse subsemigroup.  Recall from \cref{PR3} that $\psi$ is a two sided congruence on $N(\tau)$.  First we observe that if $a,b\in V$ there exist $x,y\in T$ with $a\ \psi\ x,\ b\ \psi\ y.$  As $\psi$ is a two sided congruence we obtain $ab\ \psi\ xy,$ hence $ab\in V.$  Again as $\psi$ is two sided if $a\ \psi\ b$ then $a^{-1}\ \psi\ b^{-1},$ so as $T$ is inverse it follows that $V$ is inverse.  We also note that as $E\subseteq T\subseteq V$ it is immediate that $V$ is full.  Thus $V$ is a full inverse subsemigroup.

By definition $V$ is saturated by $\psi,$ so applying \cref{sat lem} we have that $(\tau,V)$ is an inverse congruence pair.
\end{proof}
\end{lemma}

\begin{prop}\label{latprop2}
Let $\rho_1=\nu_{\tau_1}\vee\chi_{T_1}$, $\rho_2=\nu_{\tau_2}\vee\chi_{T_2}$ be left congruences on $S.$  Let $\xi$ be the least congruence on $E$ such that $\xi\supseteq\tau_1\vee\tau_2,$ and $N(\xi)\supseteq T_1\vee T_2.$  Let $\psi=\nu_\xi\cap(N(\xi)\times N(\xi)),$ and let $V=\bigcup_{t\in T_1\vee T_2} [t]_\psi.$  Then 
$$\rho_1\vee\rho_2= \rho_{(\xi,V)}.$$
\end{prop}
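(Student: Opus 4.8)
The plan is to invoke \cref{order lem}, which reduces an equality of left congruences to the equality of their traces and inverse kernels. Write $\sigma=\rho_1\vee\rho_2$. By \cref{joinlem} the pair $(\xi,V)$ is an inverse congruence pair — its hypothesis $T_1\vee T_2\subseteq N(\xi)$ is exactly the defining property of $\xi$ — so $\rho_{(\xi,V)}$ is a genuine left congruence, and by \cref{cor4} it has trace $\xi$ and inverse kernel $V$. It then suffices to establish the two containments $\sigma\subseteq\rho_{(\xi,V)}$ and $\rho_{(\xi,V)}\subseteq\sigma$, each via \cref{order lem}. First I would record that the least congruence $\xi$ genuinely exists: the family $\mathcal{F}$ of congruences $\tau$ on $E$ with $\tau\supseteq\tau_1\vee\tau_2$ and $N(\tau)\supseteq T_1\vee T_2$ is nonempty (it contains the universal congruence, whose normaliser is all of $S$) and, by the first inequality of \cref{normlem}, is closed under arbitrary intersections; hence $\xi=\bigcap\mathcal{F}$ is its minimum.

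The containment $\sigma\subseteq\rho_{(\xi,V)}$ is the easy direction. Since the trace and inverse kernel maps are order preserving, $\tra/(\rho_i)=\tau_i\subseteq\tau_1\vee\tau_2\subseteq\xi$ and $\ink/(\rho_i)=T_i\subseteq T_1\vee T_2\subseteq V$ for $i=1,2$; by \cref{order lem} each $\rho_i\subseteq\rho_{(\xi,V)}$, so their join $\sigma$ lies in $\rho_{(\xi,V)}$.

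For the reverse containment put $\xi'=\tra/(\sigma)$ and $V'=\ink/(\sigma)$; by \cref{order lem} I must show $\xi\subseteq\xi'$ and $V\subseteq V'$. For the trace, order preservation gives $\xi'\supseteq\tau_1\vee\tau_2$ and $V'\supseteq T_1\vee T_2$; since by \cref{lem1}\ref{lem4} the inverse kernel of a left congruence lies in the normaliser of its trace, $N(\xi')\supseteq V'\supseteq T_1\vee T_2$. Thus $\xi'\in\mathcal{F}$, and minimality of $\xi$ forces $\xi\subseteq\xi'$. Viewing everything as a relation on $S$, this gives $\xi\subseteq\xi'\subseteq\sigma$, so the minimal left congruence $\nu_\xi=\langle\xi\rangle$ satisfies $\nu_\xi\subseteq\sigma$, and in particular $\psi=\nu_\xi\cap(N(\xi)\times N(\xi))\subseteq\sigma$.

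The main work is the inclusion $V\subseteq V'$. Take $v\in V$, so $v\ \psi\ t$ for some $t\in T_1\vee T_2\subseteq V'$. From $\psi\subseteq\sigma$ we get $v\ \sigma\ t$, and since $t\in\ink/(\sigma)$ we have $t\ \sigma\ tt^{-1}$, so $v\ \sigma\ tt^{-1}$ places $v\in\ker/(\sigma)$. Because $\psi$ is a two-sided congruence on $N(\xi)$ (\cref{PR3}) it is compatible with inversion, so $v^{-1}\ \psi\ t^{-1}$; as $V'$ is an inverse subsemigroup, $t^{-1}\in V'=\ink/(\sigma)$, and the identical argument gives $v^{-1}\in\ker/(\sigma)$. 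By \cref{lem1}\ref{ink1}, having both $v,v^{-1}\in\ker/(\sigma)$ forces $v\in\ink/(\sigma)=V'$. Hence $V\subseteq V'$, which completes $\rho_{(\xi,V)}\subseteq\sigma$ and so the desired equality. I expect this final step — pulling a $\psi$-class representative's inverse back into $\ker/(\sigma)$ using two-sidedness of $\psi$ and then applying \cref{lem1}\ref{ink1} — to be the crux, whereas the trace computation and the easy containment are bookkeeping with the order preservation of the two maps.
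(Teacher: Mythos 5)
Your proof is correct and follows the same overall strategy as the paper: existence of $\xi$ via \cref{normlem}, $(\xi,V)$ an inverse congruence pair via \cref{joinlem}, the easy containment $\rho_1\vee\rho_2\subseteq\rho_{(\xi,V)}$ from \cref{order lem}, and the reverse containment by showing the trace of the join satisfies the defining properties of $\xi$ and then dealing with the inverse kernel. The one place you genuinely diverge is the final step: the paper gets $V\subseteq\ink/(\rho_1\vee\rho_2)$ by quoting \cref{sat lem} (the inverse kernel of any left congruence with trace $\xi$ is saturated by $\psi$), whereas you re-derive that saturation by hand --- observing $\psi\subseteq\sigma$, pushing both $v\ \psi\ t$ and $v^{-1}\ \psi\ t^{-1}$ into $\ker/(\sigma)$, and invoking \cref{lem1}\ref{ink1}. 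That is a sound, self-contained substitute; it costs a few extra lines but makes the mechanism explicit rather than appealing to the saturation lemma. One caution: the equalities $\tra/(\rho_i)=\tau_i$ and $\ink/(\rho_i)=T_i$ asserted in your easy direction are not justified, and in general fail when $(\tau_i,T_i)$ is not the inverse congruence pair of $\rho_i$ --- this is precisely the phenomenon that \cref{tracejoin} later quantifies, and the proposition is applied there with such non-canonical pairs. Only the inclusions $\tau_i\subseteq\tra/(\rho_i)$ and $T_i\subseteq\ink/(\rho_i)$ hold in general, which is all your hard direction actually uses; the easy direction should instead be argued via $\nu_{\tau_i}\subseteq\nu_\xi\subseteq\rho_{(\xi,V)}$ and $\chi_{T_i}\subseteq\chi_V\subseteq\rho_{(\xi,V)}$, whence $\rho_i\subseteq\rho_{(\xi,V)}$ without reference to the trace or inverse kernel of $\rho_i$ itself.
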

\begin{proof}
We first note that $\xi$ is well defined as given a family $\{\xi_i\ |\ i\in I\}\subseteq \mathcal{C}(E)$ with $\tau_1,\tau_2\subseteq \xi_i$ and $T_1\vee T_2\subseteq N(\xi_i)$ we may take $\xi=\bigcap_{i\in I} \xi_i,$ and it is immediate that $\tau_1,\tau_2\subseteq \xi,$ and by \cref{normlem} $T_1\vee T_2\subseteq \bigcap_{i\in I} N(\xi_i)\subseteq N(\xi).$  Hence if we take $\{\xi_i\ |\ i\in I\}$ to be set of all congruences on $E$ with these properties, then $\xi$ is well defined and is the smallest congruence on $E$ such that the properties hold.  From \cref{joinlem} we observe that $(\xi,V)$ is a inverse congruence pair.  Let $\rho=\rho_{(\xi,V)},$ then by appeal to \cref{order lem} we have that $\rho_1\vee\rho_2\subseteq \rho.$

We now show that $\rho\subseteq\rho_1\vee\rho_2.$  Let $(\zeta,W)$ be such that $\rho_1\vee\rho_2=\rho_{(\zeta,W)}.$   From \cref{order lem} we must have that $T_1\vee T_2\subseteq W\subseteq V$ and $\tau_1\vee\tau_2 \subseteq \zeta\subseteq \xi.$  As $(\zeta,W)$ is a inverse congruence pair, we get that $N(\zeta)\supseteq T_1\vee T_2.$  But by definition $\xi$ is the least congruence on $E$ that has these properties.  Therefore we get $\xi\subseteq \zeta,$ and thus $\xi=\zeta.$

Finally we can note that $T_1\vee T_2\subseteq W,$ and $W$ is saturated by $\psi.$  It is then clear that $V\subseteq W$ so $V=W,$ and thus $\rho= \rho_1\vee\rho_2.$
\end{proof}

We know that the $\mathfrak{LC}(S)\cong\mathfrak{IP}(S)\subseteq \mathfrak{C}(E)\times\mathfrak{V}(S).$  We have considered the trace and inverse kernel maps, which map \lcs/ onto the components of the direct product.  We can naturally combine these maps and obtain the function:
$$\Phi: \mathfrak{LC}(S)\rightarrow\mathfrak{C}(E)\times\mathfrak{V}(S);\ \rho\mapsto (\tra/(\rho)\ink/(\rho)).$$
We recall that there are natural lattice embeddings $\mathfrak{V}(S)\rightarrow \mathfrak{LC}(S);\ T\mapsto \chi_T,$ and $\mathfrak{C}(E)\rightarrow \mathfrak{LC}(S);\ \tau\mapsto \nu_\tau.$  We consider the function  
$$\Theta: \mathfrak{C}(E)\times\mathfrak{V}(S)\rightarrow \mathfrak{LC}(S);\ (\tau,T)\mapsto \nu_\tau\vee\chi_T.$$

\begin{theorem}\label{embed thm}
The function $\Phi$ is an meet-homomorphism, and $\Theta$ is an onto join-homomorphism.  Moreover $\Phi\Theta: \mathfrak{LC}(S)\rightarrow\mathfrak{LC}(S)$ is the identity map.
\begin{proof}
That $\Phi$ is a meet-homomorphism is immediate as the trace and inverse kernel maps are meet-homomorphisms.  Suppose $(\tau_1,T_1),(\tau_2,T_2)\in \mathfrak{C}(E)\times\mathfrak{V}(S).$  Then utilising that the trace minimal elements and the idempotent separating left congruences are sublattices we obtain
\begin{align*}
(\tau_1,T_1)\Theta\vee (\tau_2,T_2)\Theta & = (\nu_{\tau_1}\vee\chi_{T_1})\vee(\nu_{\tau_2}\vee\chi_{T_2})\\
& = (\nu_{\tau_1}\vee\nu_{\tau_2})\vee(\chi_{T_1}\vee\chi_{T_2})\\
& = \nu_{\tau_1\vee\tau_2}\vee \chi_{T_1\vee T_2}\\
& = (\tau_1\vee\tau_2,T_1\vee T_2)\Theta
\end{align*}
Thus $\Theta$ is a join-homomorphism.  From \cref{uniquepair} we know that a left congruence $\rho_{(\tau,T)}=\nu_\tau\vee\chi_T.$  Thus it is clear both that $\Phi$ is onto, and that the function $\Phi\Theta$ is the identity map.
\end{proof}
\end{theorem}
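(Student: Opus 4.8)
The plan is to prove \cref{embed thm} by assembling results already established in the paper rather than by any fresh calculation. The statement has three assertions: that $\Phi$ is a meet-homomorphism, that $\Theta$ is an onto join-homomorphism, and that $\Phi\Theta$ is the identity on \lcs/. I will handle these in that order, since the third assertion is essentially a corollary of the structural description in \cref{uniquepair}.

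For the first claim I would simply observe that $\Phi$ is the diagonal combination of the trace map and the inverse kernel map into the product $\mathfrak{C}(E)\times\mathfrak{V}(S)$. A map into a product lattice is a meet-homomorphism precisely when each coordinate is; the trace map is a $\cap$-homomorphism by \cite{PetrichRankin} (quoted in the paragraph preceding \cref{latprop1}), and the inverse kernel map is a $\cap$-homomorphism by \cref{latprop1}. So $\Phi$ respects meets componentwise. For the second claim I would compute $(\tau_1,T_1)\Theta\vee(\tau_2,T_2)\Theta$ directly: expanding the definition gives $(\nu_{\tau_1}\vee\chi_{T_1})\vee(\nu_{\tau_2}\vee\chi_{T_2})$, and the key move is to reassociate and regroup the four joins as $(\nu_{\tau_1}\vee\nu_{\tau_2})\vee(\chi_{T_1}\vee\chi_{T_2})$. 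Here I would invoke that the trace-minimal left congruences form a sublattice with $\nu_{\tau_1}\vee\nu_{\tau_2}=\nu_{\tau_1\vee\tau_2}$ (the corollary that $\tau\mapsto\nu_\tau$ is a lattice embedding), and that the idempotent-separating left congruences form a sublattice with $\chi_{T_1}\vee\chi_{T_2}=\chi_{T_1\vee T_2}$ (from \cref{M2}, which gives a lattice isomorphism onto $\mathfrak{V}(S)$). This collapses the expression to $\nu_{\tau_1\vee\tau_2}\vee\chi_{T_1\vee T_2}=(\tau_1\vee\tau_2,T_1\vee T_2)\Theta$, which is exactly the image of the join in the product lattice, establishing that $\Theta$ is a join-homomorphism.

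For surjectivity of $\Theta$ and the identity claim, I would appeal to \cref{uniquepair}, which states that every left congruence $\rho$ equals $\rho_{(\tau,T)}=\chi_T\vee\nu_\tau$ for $(\tau,T)=(\tra/(\rho),\ink/(\rho))$. This says precisely that $(\tau,T)\Theta=\rho$, so every left congruence is in the image of $\Theta$, giving ontoness. The same identity, read the other way, tells us that $\Phi\Theta$ sends $\rho$ first to $(\tra/(\rho),\ink/(\rho))$ under $\Theta$ back to $\nu_{\tra/(\rho)}\vee\chi_{\ink/(\rho)}=\rho$; equivalently $\Phi\Theta=\mathrm{id}$ because $\Theta$ is a right inverse on each pair and $\Phi$ recovers the trace and inverse kernel. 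A clean way to phrase this is: $\rho\,\Phi=(\tra/(\rho),\ink/(\rho))$, then $(\tra/(\rho),\ink/(\rho))\,\Theta=\nu_{\tra/(\rho)}\vee\chi_{\ink/(\rho)}=\rho$ by \cref{uniquepair}.

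The one subtlety I would want to be careful about is that the join computation in the second paragraph relies on the trace-minimal congruences and the idempotent-separating congruences each being closed under joins \emph{inside} \lcs/ — i.e.\ that the join taken in $\mathfrak{LC}(S)$ of two elements of each family lands back in that family and agrees with the sublattice join. The trace-minimal case is secured by the embedding corollary ($\nu_{\tau_1}\vee\nu_{\tau_2}=\langle\tau_1\vee\tau_2\rangle=\nu_{\tau_1\vee\tau_2}$), and the idempotent-separating case by the lattice isomorphism of \cref{M2}. I do not expect a serious obstacle, since all the machinery is in place; the only thing to state carefully is that the associativity and commutativity of $\vee$ in the lattice \lcs/ licenses the regrouping of the four joins, and that the two sublattice identities may then be applied independently in each group. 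This makes the whole argument a short assembly of earlier results.
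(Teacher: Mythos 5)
Your proposal is correct and follows essentially the same route as the paper's proof: componentwise meet-preservation of $\Phi$ via the trace and inverse kernel $\cap$-homomorphism results, the identical regrouping of the four joins using the two sublattice identities $\nu_{\tau_1}\vee\nu_{\tau_2}=\nu_{\tau_1\vee\tau_2}$ and $\chi_{T_1}\vee\chi_{T_2}=\chi_{T_1\vee T_2}$, and \cref{uniquepair} for surjectivity and $\Phi\Theta=\mathrm{id}$. The extra care you take in justifying the regrouping is a welcome expansion of what the paper leaves implicit, but it is not a different argument.
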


We remark that neither is $\Phi$ an join homomorphism, nor is $\Theta$ a meet homomorphism.  To see that $\Phi$ does not preserve join recall that for $\mathcal{I}_2$ there are distinct left congruences with inverse kernel equal to $E$ and join equal to the universal congruence, which has inverse kernel $\mathcal{I}_2.$  

The example of $S=\mathcal{I}_2$ also suffices to show that $\Theta$ is not a meet homomorphism.  Indeed let $\tau_1,\tau_2$ be congruences on $E$ with $N(\tau_1)=E=N(\tau_2),$ $\tau_1\not\leq\tau_2,$ and $\tau_2\not\leq\tau_1.$  Then $\nu_{\tau_1}=\rho_{(\tau_1,E)}$ and $\nu_{\tau_2}=\rho_{(\tau_2,E)}.$  In the case of $\mathcal{I}_2$ for such congruences on $E$ we observe that $\tau_1\cap\tau_2=\iota$ where $\iota$ is the trivial congruence.  It is clear from \cref{lcons} that
\begin{align*}
(\tau_1,S)\Theta\cap(\tau_2,S)\Theta & = (\nu_{\tau_1}\vee\chi_S)\cap (\nu_{\tau_2}\vee\chi_S)\\
& \neq \chi_S \\
& = \nu_{\iota}\vee\chi_S\\
& = \nu_{\tau_1\cap\tau_2}\vee\chi_S\\
& = (\tau_1\cap\tau_2,S\cap S)\Theta
\end{align*}
thus $\Theta$ is not a join homomorphism.

\section{Two-sided Congruences}

We will now briefly discuss the lattice of two sided congruences, which we regard as a subset of the lattice of left congruences.  Given two congruences we recall that their join as congruences is equal to their join regarded as equivalence relations.  Hence the $\mathfrak{C}(S)$ is a sublattice of $\mathfrak{LC}(S).$  For a two sided congruence $\rho$ the kernel is an inverse subsemigroup, therefore 
$$\ker/(\rho)=\ker/(\rho\cap\mathcal{R})=\ink/(\rho).$$

It was first shown that two sided congruences on inverse semigroups are uniquely determined by the trace and kernel by Scheiblich \cite{Scheiblich}.  Pairs that arise as the trace and kernel of a congruence are commonly termed congruence pairs.  The following characterisation is due to Green \cite{green}.

\begin{deff}[Congruence Pair \cite{green}]
Let $T$ be a self conjugate full inverse subsemigroup of $S$ and let $\tau$ be a congruence on $E$ with $N(\tau)=S.$  Then say $(\tau,T)$ is a congruence pair if it satisfies:
\begin{enumerate}[label={(P\arabic*)}]
\item for $x\in S$ and $e\in E$ if $xe\in T$ and $e\ \tau\ x^{-1}x$ then $x\in T;$\label{cp1}
\item for each $x\in T$ we have $xx^{-1}\ \tau\ x^{-1}x.$\label{cp2}
\end{enumerate}
\end{deff}

\begin{theorem}[Kernel-trace description of two sided congruences on inverse semigroups; \cite{green}]\label{P1}
Let $(\tau,T)$ be a congruence pair for $S,$ and define:
$$P_{(\tau,T)}= \{(a,b)\ |\ a^{-1}a\ \tau\ b^{-1}b,\ ab^{-1}\in T \}.$$
Then $P_{(\tau,T)}$ is a congruence on $S.$  Moreover if $\rho$ is a congruence on $S$ then $(\tra/(\rho),\ker/(\rho))$ is a congruence pair for $S$ and $\rho=P_{(\tra/(\rho),\ker/(\rho))}.$
\end{theorem}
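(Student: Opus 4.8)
The plan is to prove the Kernel-trace description of two-sided congruences (\cref{P1}) by exploiting the fact that a two-sided congruence is a left congruence for which kernel and inverse kernel coincide, so that I can piggyback on the inverse-kernel-trace machinery already established, namely \cref{biggy} and the formula $\rho_{(\tau,T)}$. The key structural observation is that for a \emph{self-conjugate} full inverse subsemigroup $T$ with $N(\tau)=S$, the two defining conditions of an inverse congruence pair (\cref{lcpdeff}) and the two-sided congruence-pair conditions \ref{cp1}, \ref{cp2} should be equivalent; moreover the simpler formula $P_{(\tau,T)}=\{(a,b)\mid a^{-1}a\ \tau\ b^{-1}b,\ ab^{-1}\in T\}$ should coincide with $\rho_{(\tau,T)}$ under these hypotheses. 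If I establish that, then both directions follow essentially from \cref{biggy}.

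First I would prove the forward direction. Let $(\tau,T)$ be a congruence pair; I want $P_{(\tau,T)}$ to be a two-sided congruence. The cleanest route is to verify that $(\tau,T)$ is in fact an inverse congruence pair, i.e. check (D1) $T\subseteq N(\tau)$, which is immediate since $N(\tau)=S$, and \ref{lcpdeff2}. For \ref{lcpdeff2}, given $x$ with $x^{-1}x\ \tau\ e$, $xx^{-1}\ \tau\ f$, and $xe,fx\in T$, I would use \ref{cp1} applied to $xe\in T$ together with $e\ \tau\ (xe)^{-1}(xe)$ (which follows since $(xe)^{-1}(xe)=ex^{-1}xe=ex^{-1}x\ \tau\ e$) to deduce $x\in T$. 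Once $(\tau,T)$ is an inverse congruence pair, \cref{biggy} hands me that $\rho_{(\tau,T)}$ is a left congruence with trace $\tau$ and inverse kernel $T$. It then remains to show $P_{(\tau,T)}=\rho_{(\tau,T)}$ and that this relation is right-compatible. To identify the two formulas I would show, using \ref{cp2} (so that elements of $T$ have $\mathcal{H}$-like behaviour on idempotents, $xx^{-1}\ \tau\ x^{-1}x$) and self-conjugacy, that the single condition $ab^{-1}\in T$ together with $a^{-1}a\ \tau\ b^{-1}b$ is equivalent to the three conditions defining $\rho_{(\tau,T)}$; here $a^{-1}b=a^{-1}(ab^{-1})b$ and conjugation arguments convert between $ab^{-1}$ and $a^{-1}b$ membership using that $T$ is self-conjugate. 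Right-compatibility of $P_{(\tau,T)}$ should then be a short computation: from $a\ P\ b$ and $c\in S$ I conjugate $a^{-1}a\ \tau\ b^{-1}b$ by $c$ and use $(ac)(bc)^{-1}=a(cc^{-1})b^{-1}$ together with self-conjugacy to land $(ac)(bc)^{-1}$ back in $T$, mirroring the right-compatibility step already carried out in the proof of \cref{PR3}.

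For the converse, let $\rho$ be a (two-sided) congruence. Since $\rho$ is in particular a left congruence, \cref{biggy} gives that $(\tra/(\rho),\ink/(\rho))$ is an inverse congruence pair and $\rho=\rho_{(\tra/(\rho),\ink/(\rho))}$. Because $\rho$ is two-sided, its kernel is an inverse subsemigroup, so by \cref{lem1}\ref{ink1} we have $\ink/(\rho)=\ker/(\rho)$, and the introductory remarks of this section already record $\ker/(\rho)=\ink/(\rho)$. I would then check that this pair is a congruence pair in Green's sense: $N(\tra/(\rho))=S$ holds because $\rho$ is two-sided (every element normalises the trace), self-conjugacy of $\ker/(\rho)$ is the standard fact that kernels of two-sided congruences are self-conjugate, \ref{cp1} is a weakening of \ref{lcpdeff2} (take $f=x x^{-1}$, using two-sidedness to supply the missing condition), and \ref{cp2} follows because $x\ \rho\ xx^{-1}$ with $\rho$ two-sided forces $x^{-1}x\ \rho\ x^{-1}(xx^{-1})x=x^{-1}x$... more precisely from $x\in\ker/(\rho)$ and two-sidedness one gets $x^{-1}x\ \rho\ xx^{-1}$, i.e. $xx^{-1}\ \tau\ x^{-1}x$. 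Finally $\rho=\rho_{(\tau,\ker/(\rho))}=P_{(\tau,\ker/(\rho))}$ by the identification of formulas established above.

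The main obstacle I anticipate is the identification $P_{(\tau,T)}=\rho_{(\tau,T)}$: showing that the single, simpler condition $ab^{-1}\in T$ (with the trace condition) is genuinely equivalent to the three asymmetric conditions defining $\rho_{(\tau,T)}$. This equivalence is exactly where self-conjugacy of $T$ and condition \ref{cp2} do their work, converting between $ab^{-1}$ and $a^{-1}b$ and replacing the separate product-idempotent trace conditions by the symmetric hypothesis $a^{-1}a\ \tau\ b^{-1}b$; getting all the conjugation bookkeeping right, rather than any conceptual difficulty, is the delicate part. Everything else reduces to citing \cref{biggy} and \cref{lem1} plus short compatibility verifications.
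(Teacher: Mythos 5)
Your outline follows essentially the same route as the paper: both reduce the theorem to the inverse--kernel--trace machinery of \cref{biggy}, translate the congruence-pair axioms into the inverse-congruence-pair axioms, and carry out the same self-conjugacy/\ref{cp1}/\ref{cp2} bookkeeping to match $P_{(\tau,T)}$ with $\rho_{(\tau,T)}$; the paper merely packages the two-sidedness check as ``$\rho_L=\rho_R$'' in \cref{prop2s1}, whereas you check right-compatibility of $P_{(\tau,T)}$ directly, which is only a reorganisation.

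Two steps need repair, one of them genuinely. In the converse direction you derive \ref{cp1} from \ref{lcpdeff2} by ``taking $f=xx^{-1}$'', but then the hypothesis $fx\in T$ of \ref{lcpdeff2} reads $xx^{-1}x=x\in T$, which is exactly the conclusion you are trying to reach, so as written the argument is circular. The working choice is $f=xex^{-1}$: then $fx=xex^{-1}x=xe\in T$, and $xx^{-1}\ \tau\ xex^{-1}$ follows by conjugating $e\ \tau\ x^{-1}x$ by $x$, legitimate because you have already established $N(\tau)=S$ for a two-sided congruence (this is precisely the computation in the proof of \cref{prop2s1}); alternatively \ref{cp1} can be verified directly, since $e\ \tau\ x^{-1}x$ gives $xe\ \rho\ x$ and $xe\in\ker/(\rho)$ then forces $x\in\ker/(\rho)=T$. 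Separately, the identity ``$a^{-1}b=a^{-1}(ab^{-1})b$'' you invoke is false (the right-hand side is the idempotent $a^{-1}ab^{-1}b$); the intended mechanism is the conjugation $a^{-1}(ba^{-1})a=(a^{-1}b)(a^{-1}a)\in T$ followed by an application of \ref{cp1}, for which you must first extract $a^{-1}a\ \tau\ b^{-1}aa^{-1}b$ from the trace hypotheses. With those two corrections the rest of your outline is sound.
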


In the following we will show that this follows in straightforward fashion from the description of one-sided congruences in terms of the inverse kernel and trace.  It is clear that $\rho=\rho_{(\tau,T)}$ is a two sided congruence if and only if the left and right congruences this corresponding to $(\tau,T)$ are equal.

We use subscript $L,R$ to differentiate between left or right congruences, and recall that for an inverse congruence pair the corresponding left and right congruences are (resp.):
\begin{gather*}
\rho_L= \{ (a,b)\ |\ a^{-1}b\in T,\ a^{-1}bb^{-1}a\ \tau\ a^{-1}a,\ b^{-1}aa^{-1}b\ \tau\ b^{-1}b \}\\
\rho_R= \{ (a,b)\ |\ ab^{-1}\in T,\ ab^{-1}ba^{-1}\ \tau\ aa^{-1},\ ba^{-1}ab^{-1}\ \tau\ ab^{-1} \}.
\end{gather*}

\begin{prop}\label{prop2s1}
Let $(\tau,T)$ be an inverse congruence pair and let $\rho_L,\rho_R$ be the corresponding left and right congruences.  Then $(\tau,T)$ is a congruence pair if and only if 
$\rho_L=\rho_R.$
\end{prop}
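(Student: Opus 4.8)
The plan is to use the observation recorded immediately before the statement: since $\rho_R$ is nothing but the right congruence $(\rho_L)_{-1}$ with the same trace and inverse kernel as $\rho_L$, the two relations coincide exactly when $\rho:=\rho_L$ is two-sided. So it suffices to prove that $(\tau,T)$ is a congruence pair if and only if $\rho_L$ is a two-sided congruence, and I would split this into the two implications. Throughout I would use only the inverse-kernel--trace machinery and elementary inverse-semigroup algebra, keeping the argument independent of \cref{P1}, which this section sets out to recover.

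For the reverse implication, suppose $\rho:=\rho_L=\rho_R$ is two-sided with $\tra/(\rho)=\tau$; since the kernel of a two-sided congruence is inverse, $\ker/(\rho)=\ink/(\rho)=T$. Each congruence-pair axiom then drops out of basic properties of $\rho$. We get $N(\tau)=S$ because a two-sided congruence is compatible with all products, so conjugating any pair $e\ \rho\ f$ of $\tau$-related idempotents by an arbitrary $a$ shows $a\in N(\tau)$. The subsemigroup $T$ is self-conjugate because $x\ \rho\ xx^{-1}$ gives $cxc^{-1}\ \rho\ c(xx^{-1})c^{-1}$, an idempotent, so $cxc^{-1}\in\ker/(\rho)=T$. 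Axiom \ref{cp2} holds since $x\in T$ gives $x\ \rho\ xx^{-1}$ and, using $\rho=\rho_{-1}$, also $x\ \rho\ x^{-1}x$, whence $xx^{-1}\ \tau\ x^{-1}x$. Finally \ref{cp1} holds since $xe\in T$ with $e\ \tau\ x^{-1}x$ yields $xe\ \rho\ (xe)(xe)^{-1}=xex^{-1}$ and, left-multiplying $e\ \rho\ x^{-1}x$ by $x$, also $xe\ \rho\ x$; thus $x$ is related to the idempotent $xex^{-1}$ and so lies in $T$. This direction is routine.

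For the forward implication I would assume $(\tau,T)$ is a congruence pair and prove $\rho_L\subseteq\rho_R$; the reverse inclusion then follows by the symmetry of the congruence-pair axioms under $x\mapsto x^{-1}$, which interchanges $\rho_L$ and $\rho_R$. Given $(a,b)\in\rho_L$, applying \ref{cp2} to $a^{-1}b\in T$ gives $a^{-1}bb^{-1}a\ \tau\ b^{-1}aa^{-1}b$, so together with the two defining trace relations of $\rho_L$ this yields $a^{-1}a\ \tau\ b^{-1}b$. Conjugating the defining relations by $a$ and by $b$, which is legitimate as $N(\tau)=S$, then produces $aa^{-1}\ \tau\ bb^{-1}$ along with the two trace conditions $ab^{-1}ba^{-1}\ \tau\ aa^{-1}$ and $ba^{-1}ab^{-1}\ \tau\ bb^{-1}$ required for $\rho_R$. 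The one remaining and genuinely delicate point is the membership $ab^{-1}\in T$.

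To obtain $ab^{-1}\in T$ I would first record the dual of \ref{cp1}: if $ex\in T$ and $e\ \tau\ xx^{-1}$ then $x\in T$ (apply \ref{cp1} to $x^{-1}$, using that $T$ is inverse). Self-conjugacy of $T$ applied to $b^{-1}a\in T$ gives $b(b^{-1}a)b^{-1}=bb^{-1}\,ab^{-1}\in T$; taking $x=ab^{-1}$ and $e=bb^{-1}$, the relations already established give $e\ \tau\ xx^{-1}$, so the dual of \ref{cp1} delivers $ab^{-1}\in T$, completing $(a,b)\in\rho_R$. I expect this extraction of $ab^{-1}\in T$ to be the main obstacle, since it is the only step that forces self-conjugacy and \ref{cp1} to be used together; everything else is bookkeeping with idempotents and conjugation. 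Invoking the symmetry noted above then closes $\rho_R\subseteq\rho_L$, giving $\rho_L=\rho_R$.
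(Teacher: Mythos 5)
Your proposal is correct and follows essentially the same route as the paper: both directions are handled the same way, with the reverse implication reading the congruence-pair axioms off the two-sided congruence, and the forward implication using \ref{cp2} to get $a^{-1}a\ \tau\ b^{-1}b$, conjugating (legitimate since $N(\tau)=S$), and combining self-conjugacy of $T$ with \ref{cp1} to extract $ab^{-1}\in T$. The only differences are cosmetic — you invoke the inverse-dual form of \ref{cp1} with $e=bb^{-1}$ where the paper applies \ref{cp1} directly with $e=aa^{-1}$, and you verify \ref{cp1} in the reverse direction by a direct kernel computation rather than citing \ref{lcpdeff2}.
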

\begin{proof}
Initially we assume that $\rho_l=\rho_r=\rho.$  Since $(\tau,T)$ is a inverse congruence pair we have that $T$ is a full inverse subsemigroup.  Also as $\rho$ is two sided we have that $\ink/(\rho)=T=\ker/(\rho).$

We first establish that $T$ is self conjugate.  Suppose that $b\in T$ so $b\ \rho\ bb^{-1}$.  As $\rho$ is a two sided congruence we obtain $aba^{-1}\ \rho\ abb^{-1}a^{-1}.$  As $abb^{-1}a^{-1}\in E,$ we have $aba^{-1}\in \ker/(\rho)=T,$ so $T$ is self conjugate. 

Next we show that $N(\tau)=S.$  Suppose that $e\ \tau\ f$ and $a\in S.$  As $\rho$ is a two sided congruence it is immediate that $aea^{-1} \rho\ afa^{-1},$ so $a^{-1}ea\ \tau\ a^{-1}fa,$ and thus $a\in N(\tau).$

We now establish \ref{cp1}.  Suppose that $ae\in T,$ and $e\ \tau\ a^{-1}a;$ we need that $a\in T.$  As $N(\tau)=S$ we have that $a\in N(\tau)$ so we conjugate $e\ \tau\ a^{-1}a$ by $a$ to obtain $aea^{-1}\ \tau\ aa^{-1},$ and we note that $ae=(aea^{-1})a$.  Then from \ref{lcpdeff2} (from \cref{lcpdeff}) it is immediate that $a\in T$. 

To establish \ref{cp2} we apply left and right versions of \cref{lem1} to get that 
$$\{a\ |\ a\ \rho\ a^{-1}a \}=T=\{a\ |\ a\ \rho\ aa^{-1} \}.$$
Then we get that if $a\in T$ then $a^{-1}a\ \rho\ a\ \rho\ aa^{-1}.$  

For the converse we suppose that $(\tau,T)$ is a congruence pair.  We first note that this implies that $(\tau,T)$ is an inverse congruence pair, as $T\subseteq S=N(\tau)$, and if \ref{cp1} holds then it is immediate that \ref{lcpdeff2} holds. 

We show that $\rho_L\subseteq \rho_R.$  Suppose $a\ \rho_L\ b,$ so $a^{-1}b\in T$ and $a^{-1}bb^{-1}a\ \tau\ a^{-1}a, b^{-1}aa^{-1}b\ \tau\ b^{-1}b.$  As $a^{-1}b\in T$ by \ref{cp2} we have that $b^{-1}aa^{-1}b\ \tau\ a^{-1}bb^{-1}a.$  Then we have:
$$a^{-1}a\ \tau\ a^{-1}bb^{-1}a\ \tau\ b^{-1}aa^{-1}b\ \tau\ b^{-1}b.$$
Since $N(\tau)=S$ we conjugate the relation $a^{-1}a\ \tau\ b^{-1}b$ by $a,b$ thus:
$$aa^{-1}\ \tau\ ab^{-1}ba^{-1},\ bb^{-1}\ \tau\ ba^{-1}ab^{-1}.$$
We also conjugate $a^{-1}a\ \tau\ a^{-1}bb^{-1}a$ by $a,$ and conjugate $b^{-1}b\ \tau\ b^{-1}aa^{-1}b$ by $b$ to obtain 
$$aa^{-1}\ \tau\ aa^{-1}bb^{-1}\ \tau\ bb^{-1}.$$
Since $b^{-1}a\in T,$ and $T$ is self conjugate, we have $ab^{-1}aa^{-1}\in T.$  Also
$$aa^{-1}\ \tau\ bb^{-1}\ \tau\ ba^{-1}ab^{-1}=(ab^{-1})^{-1}(ab^{-1}),$$
so \ref{cp1} with $x=ab^{-1}$ and $e=aa^{-1}$ gives that $ab^{-1}\in T.$  Whence we shown that $a\ \rho_R\ b.$

The dual argument gives that $\rho_R\subseteq \rho_L,$ hence the two are equal.
\end{proof}

To complete a proof of \cref{P1} it then suffices to show that when $\rho_L=\rho_R=\rho$ the two sided congruence reduces to the stated form.  We note that in the proof of \cref{prop2s1} we saw that when $(\tau,T)$ is a congruence pair and $a\ \rho_L\ b$ we have that $a^{-1}a\ \tau\ b^{-1}b.$  Since this is exactly when $\rho_L=\rho_R$ it is immediate that $\rho_L\subseteq \{(a,b)\ |\ a^{-1}a\ \tau\ b^{-1}b,\ ab^{-1}\in T\}=P_{(\tau,T)}.$

The other inclusion is also straightforward, suppose that $a^{-1}a\ \tau\ b^{-1}b$ and $ab^{-1}\in T.$  Then conjugating $a^{-1}a\ \tau\ b^{-1}b$ by $a,b$ gives $aa^{-1}\ \tau\ ab^{-1}ba^{-1},$ and $bb^{-1}\ \tau\ ba^{-1}ab^{-1}.$  Hence $P_{(\tau,T)}\subseteq \rho_R.$

\section{The Bicyclic Monoid}

Descriptions of one-sided congruences on the bicyclic monoid are known (\cite{nico} and \cite{duchamp1986etude}).  However it is an illuminating illustration of our techniques to apply the inverse kernel trace approach to the lattice of left congruences. We will use the following description of the bicyclic monoid: $B=\mathbb{N}^0\times \mathbb{N}^0$ with multiplication:
$$(a,b)(c,d)=(a-b+t,d-c+t)$$
where $t=\max\{b,c\}.$

Initially we describe the lattice of full inverse subsemigroups of $B,$ and the lattice of congruences on $E(B).$  For the former we appeal to the work of Jones \cite{jones} and Descal{\c{c}}o and Ru{\v{s}}kuc \cite{subsBicyclic}.  

\begin{deff}
For $k,d\in \mathbb{N}^0$ define:
$$T_{k,d}= \{(x,y)\ |\ x,y\geq k,\ d\! \mid\! x-y \}$$
\end{deff}

We note that each element in $T_{k,d}$ is of the form $(i,i+md)$ or $(i+md,i)$ for some $i\geq k$ and $m\geq 0.$  Together with $E(B),$ the $T_{k,d}$'s form a complete list of all full inverse subsemigroups of $B.$ 

\begin{theorem}[Theorem 7.1; \cite{subsBicyclic}]\label{byinv}
For $k\geq 0,\ d\geq 1,$ $T_{k,d}$ is a full inverse subsemigroup of $B$.  Moreover if $T\neq E(B)$ is a full inverse subsemigroup of $B,$ then $T=T_{k,d}$ for some $k\geq 0,\ d\geq 1.$ 
\end{theorem}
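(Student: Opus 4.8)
The plan is to treat this as a direct combinatorial statement about $B$, independent of the inverse-kernel machinery. For the first assertion I would verify the three defining properties of a full inverse subsemigroup straight from the coordinate description (reading $T_{k,d}$ as containing all of $E(B)$, which is what makes it full). Since $(x,y)^{-1}=(y,x)$ and the conditions $x,y\ge k$ and $d\mid x-y$ are symmetric in the two coordinates, $T_{k,d}$ is closed under inversion, and fullness is immediate. The only substantive point is closure under the product, and for this I would record two observations about $(x,y)(u,v)=(x-y+t,\,v-u+t)$ with $t=\max\{y,u\}$: the difference of the two coordinates of the product equals $(x-y)+(u-v)$, the sum of the two input differences, hence is divisible by $d$ whenever both inputs are; and the first coordinate $x-y+t\ge x$ while the second $v-u+t\ge v$, so both coordinates of a product are at least $k$ whenever those of the factors are. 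This makes the first part routine.

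For the converse I would fix a full inverse subsemigroup $T\neq E(B)$ and manufacture the pair $(k,d)$. Set $d$ to be the positive generator of the subgroup $G=\{x-y\mid (x,y)\in T\}$ of $(\mathbb{Z},+)$ (that $G$ is a subgroup follows from the difference-additivity observation above together with inverse-closure, and $d\ge 1$ since $T$ has an off-diagonal element), and set $k=\min\{\min(x,y)\mid (x,y)\in T,\ x\neq y\}$. Two structural identities drive the argument. First, left multiplication by the idempotent $(x+s,x+s)\in T$ gives $(x,y)\in T\Rightarrow(x+s,y+s)\in T$ for all $s\ge 0$, so $T$ is closed under raising both coordinates (this is where fullness enters). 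Second, a direct computation yields the subtraction identity $(k,k+b)(k+a,k)=(k+a-b,k)$ for $a\ge b\ge 0$. The crux of the whole proof is the claim that $(k+d,k)\in T$, i.e. that the generating difference $d$ is realised at the minimal level $k$.

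To prove the crux I would study $P=\{a\ge 1\mid (k+a,k)\in T\}$. By the definition of $k$ and inverse-closure $P$ is non-empty, and by the subtraction identity it is closed under proper subtraction, so $d'=\min P=\gcd P$ lies in $P$, giving $(k+d',k)\in T$. It remains to identify $d'$ with $d$: clearly $d\mid d'$, and for the reverse I take any $(u,v)\in T$ with difference $\delta=u-v$ and multiply on the right by a large power $(k+nd',k)$ of the element just found, choosing $n$ with $k+nd'\ge v$; the product formula then gives $(u,v)(k+nd',k)=(k+\delta+nd',k)\in T$, so $\delta+nd'\in P$ and hence $d'\mid\delta$. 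Thus $d'$ divides every difference in $G$, whence $d'\mid d$ and $d'=d$. With $(k+d,k)\in T$ secured, its powers give $(k+nd,k)\in T$ for all $n\ge 0$, raising coordinates then produces every $(x,y)$ with $x\ge y\ge k$ and $d\mid x-y$, and inverse-closure supplies the rest, so $T\supseteq T_{k,d}$; the reverse inclusion is exactly the definitions of $k$ and $d$, and therefore $T=T_{k,d}$.

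The main obstacle is precisely the crux $(k+d,k)\in T$. The natural operations in $B$, namely composing matched elements and raising coordinates, both tend to \emph{increase} coordinates, so there is no direct way to push a difference down to the bottom level. The resolution is the cooperation of two devices: the subtraction identity, which runs a Euclidean reduction while pinned at level $k$, and the right-multiplication transport $(u,v)(k+nd',k)=(k+\delta+nd',k)$, which brings an arbitrary difference down to a level-$k$ element at the cost of an adjustment by a multiple of $d'$ that is harmless modulo $d'$. Arranging for these to interact correctly, and checking at each step that the $\max$ in the product formula falls on the intended argument, is the delicate part; the remaining inclusions and closure checks are bookkeeping.
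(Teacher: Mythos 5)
The paper does not actually prove this statement: it imports it verbatim as Theorem 7.1 of the cited work of Descal\c{c}o and Ru\v{s}kuc, so there is no in-paper argument to compare against. Your proposal is a correct, self-contained and pleasantly elementary proof of the quoted result. The three computational identities you isolate all check out against the multiplication $(a,b)(c,d)=(a-b+t,\,d-c+t)$, $t=\max\{b,c\}$: the difference-additivity and coordinate lower bounds for closure; the raising identity $(x+s,x+s)(x,y)=(x+s,y+s)$; the subtraction identity $(k,k+b)(k+a,k)=(k+a-b,k)$ for $a\ge b$; and the transport $(u,v)(k+nd',k)=(k+(u-v)+nd',k)$ once $k+nd'\ge v$. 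The Euclidean argument on $P$ then correctly forces $\min P=\gcd P=d$ and delivers the crux $(k+d,k)\in T$, after which the two inclusions are routine. Two small points you should make explicit if you write this up: in the closure check you only treat factors whose coordinates are both $\ge k$, so you must separately (and trivially) verify products involving the adjoined idempotents $(e,e)$ with $e<k$, where the $\max$ lands on the other factor and the product is unchanged or idempotent; and in the transport step you need $\delta+nd'\ge 1$ before concluding $\delta+nd'\in P$, which you can guarantee by first replacing $(u,v)$ by its inverse to assume $\delta>0$ (or by taking $n$ larger). Neither affects the correctness of the overall argument, and your identification of the crux --- that the generating difference must be realised at the minimal level $k$, against the grain of a monoid whose operations only push coordinates upward --- is exactly the right place to concentrate the effort.
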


We note that $T_{k,d}\subseteq T_{j,c}$ if and only if $j\leq k,$ and $c\!\mid\! d.$  Let  $\mathbf{C}$ be the positive integers under the reverse of the usual order, and let $\mathbf{N}$ be the lattice consisting of the natural numbers with $m\leq_\mathbf{N} n$ if $n\mid m.$  Then $\mathfrak{V}(B)\cong (\mathbf{C}\times\mathbf{N})^0,$ where by $\mathbf{L}^0$ we mean the lattice $\mathbf{L}$ with a $0$ adjoined.  The $0$ of $\mathfrak{V}(B)$ corresponds to $E(B).$

We now consider the trace lattice.  Idempotents in $B$ are of the form $(x,x)$ for some $x\in \mathbb{N}^0,$ and $(x,x)(y,y)=(\max\{x,y\},\max\{x,y\}).$  Thus $E(B)$ is isomorphic to the lattice $\mathbf{C}.$  Congruences on a chain of idempotents can be viewed as partitions of the chain $\mathbf{C},$ hence a congruence on $E(B)$ corresponds to a partition of $\mathbb{N}^0.$  We note that a partition of $\mathbb{N}^0$ is determined by the set of the maximum (under the usual order on $\mathbb{N}^0$) element of each equivalence class (we note that in the case that there is an infinite equivalence class this corresponds to a finite set).  This gives a bijection between the set of congruences on $\mathbb{N}^0$ and $\mathbb{P}(\mathbb{N}^0)$ the powerset of $\mathbb{N}^0.$  We next observe that under this correspondence the ordering on the congruences becomes the reverse of the usual subset inclusion ordering on $\mathbb{P(N}^0).$  We write $\mathbf{P}$ for this lattice.

The next step is to compute the normaliser for each trace.  To do this it is helpful to establish the following notation.

\begin{deff}
Let $\tau$ be a congruence on $E(B).$  Let $\Gamma(\tau)=\{c_1,c_2,\dots\}$ be the set of integers $a,$ such that $(a,a)$ is maximum in its congruence class.  Also let $\Xi(\tau)=\{m_1,m_2,m_3\dots\}$ be the sequence of integers corresponding to sizes of the finite congruence classes.
\end{deff}

We note that for a congruence $\tau,$ $\Gamma(\tau)$ and $\Xi(\tau)$ are both finite if and only if $\tau$ has an infinite congruence class.  We also observe that $c_u=-1 + \sum_{i=1}^u m_i ,$ and $m_u=c_u-c_{u-1}.$  The 

\begin{deff}\label{evperdef}
Let $\tau$ be a congruence on $E(B)$ with no infinite congruence class.  Let $\Gamma(\tau)=\{m_1,m_2,\dots\},$ and $\Xi(\tau)=\{c_1,c_2,\dots\}$.  We say that $\tau$ is eventually periodic if there are $r,p\geq 1$ such that 
$$ s\geq r \implies m_s=m_{s+p}. $$
For $r,p$ chosen to be minimum such that this holds let $k=c_{r-1}+1$ if $r\geq 2$ or $k=0$ if $r=1,$ and let $d=\sum_{j=r}^{p-1} m_j.$  Then $d$ is the period of $\tau$ and we say that $\tau$ is $d$-periodic after $k.$
\end{deff}

We note that $r,p$ can certainly be chosen to both be minimum.  For, there is a shortest repeating pattern in $\Gamma(\tau),$ the length of which we set to $p,$ and there is then an earliest point this pattern starts, which we call $r.$  Let $\tau$ be an eventually periodic congruence on $E(B),$ and let $p,r$ be as in \cref{evperdef} chosen to be minimum.  Let $\Gamma(\tau)=\{m_1,m_2,\dots\},$ and $\Xi(\tau)=\{c_1,c_2,\dots\}.$  Then define $l(\tau)= c_r-\min\{m_{r-1},m_{r+p-1}\}+1.$

\begin{lemma}\label{perlem}
Let $\tau$ be a congruence on $E(B)$ which is $d$-periodic after $k,$ and let $l=l(\tau).$  Let $x,y\geq l,$ then $(x,x)\ \tau\ (y,y)$ if and only if $(x+d,x+d)\ \tau\ (y+d,y+d).$
\begin{proof}
Suppose that $\tau$ is $d$-periodic after $k$ with $r,p$ as before and let $\Gamma(\tau)=\{m_1,m_2,\dots\},$ and $\Xi(\tau)=\{c_1,c_2,\dots\}$.  We observe that as the sequence of $m_u$ repeats for $u\geq r$ for any $q\geq r$ we have $\sum_{i=q}^{q+p-1} m_i =d.$  Hence for any $u\geq r$ we have that $c_{u+p}=c_u +\sum_{i=u+1}^{u+p} m_i = c_u +d.$

Suppose  $(x,x)\ \tau\ (y,y).$   Initially suppose $k\leq x\leq y$ then there is some $u\geq r$ such that 
$$c_u < x \leq y \leq c_{u+1}.$$
Then we note that 
$$c_{u+p} = c_u + d < x+d\leq y+d\leq c_{u+1} + d  = c_{u+1+p}.$$
Thus $(x+d,x+d)\ \tau\ (y+d,y+d).$  

We note that since $(x,x)\ \tau\ (y,y)$ if $l\leq x<k$ then $l \leq y<k.$  If $l\leq x\leq y <k$ then we know that $c_{r+p-1}\leq x+d\leq y+d\leq c_{r+p}.$  Thus $(x+d,x+d)\ \tau\ (y+d,y+d).$

Conversely suppose that $(x+d,x+d)\ \tau\ (y+d,y+d).$  Then there is some $v$ such that $c_v < x+d\leq y+d\leq c_{v+1}.$  If $x,y\geq k$ we know that $r\ \leq v-p.$  Then $c_{v-p}=c_v-d$ and we observe
$$ c_{v-p}=c_v-d < (x+d)-d \leq (y+d)-d \leq c_{v+1}-d = c_{v-p+1}. $$
Hence $(x,x)\ \tau\ (y,y).$  If $l\leq x,y < k$ then as $c_{r-1}< l$ we have that $(x,x)\ \tau\ (y,y).$ 
\end{proof}
\end{lemma}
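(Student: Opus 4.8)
The plan is to reduce the statement to a purely combinatorial fact about the partition of $\mathbb{N}^0$ induced by $\tau$. Recall from the discussion identifying $E(B)\cong\mathbf{C}$ that a congruence on a chain is determined by its set of class-maxima $\Gamma(\tau)=\{c_1,c_2,\dots\}$, so that $(x,x)\ \tau\ (y,y)$ holds precisely when $x$ and $y$ lie in a common interval $I_u=(c_{u-1},c_u]$ (with $I_1=[0,c_1]$ and $|I_u|=m_u$). In these terms the lemma asserts that $x\mapsto x+d$ is a bijection of $\{x\mid x\geq l\}$ which both preserves and reflects the relation ``$x,y$ lie in the same $I_u$''. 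So the first step is to record this translation and then to prove the single identity that drives everything: since one full period of classes spans exactly $d$ integers, $\sum_{i=q}^{q+p-1}m_i=d$ for $q\geq r$, we get
$$c_{u+p}=c_u+d\qquad\text{for all }u\geq r-1.$$
The identity reaches down to $u=r-1$ because it only invokes periodicity of the $m_i$ at indices $\geq r$; this one extra index is exactly what makes the junction computation work.

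Granting this, the second step handles the interior. For $u\geq r$ both endpoints of $I_u$ translate correctly, so $I_u+d=I_{u+p}$ exactly, and hence $x\mapsto x+d$ carries $\bigcup_{u\geq r}I_u=\{x\mid x\geq k\}$ bijectively onto $\bigcup_{u\geq r}I_{u+p}$ while matching classes. Thus the biconditional is immediate for $x,y\geq k$. Moreover the two regimes never interact inside a single class: an interval $I_{r-1}$ lies entirely below $k$, so if $x\sim_\tau y$ with $x\in I_{r-1}$ then necessarily $y\in I_{r-1}$, and symmetrically after shifting, since the image of $I_{r-1}$ will sit below the image of every interior interval.

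The third and only delicate step is the junction between the last pre-periodic interval $I_{r-1}$ and its image, and I expect this bookkeeping to be the main obstacle. Here $I_{r-1}+d$ fails to be an interval $I_v$: its top endpoint matches, $c_{r-1}+d=c_{r+p-1}$, but its bottom endpoint $c_{r-2}+d$ agrees with $c_{r+p-2}$ only when $m_{r-1}=m_{r+p-1}$. Consequently the shift is class-preserving in \emph{both} directions only on the top portion of $I_{r-1}$ of length $\min\{m_{r-1},m_{r+p-1}\}$, namely on $x>c_{r-1}-\min\{m_{r-1},m_{r+p-1}\}$, where it is a bijection onto the top portion of $I_{r+p-1}$ of equal length. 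The point of the constant $l=c_r-\min\{m_{r-1},m_{r+p-1}\}+1$ is precisely that $l-\bigl(c_{r-1}-\min\{m_{r-1},m_{r+p-1}\}+1\bigr)=m_r\geq 1$, so $x\geq l$ forces every $x<k$ into this safe portion (and sends every other $x$ to the interior case of the second step). I would then finish by checking, in the two cases $m_{r-1}\leq m_{r+p-1}$ and $m_{r+p-1}\leq m_{r-1}$, that for $x,y\in[l,k)$ both $x,y$ lie in $I_{r-1}$ while $x+d,y+d$ lie in $I_{r+p-1}$, so that the relation is preserved and reflected; together with the interior step this establishes the equivalence for all $x,y\geq l$. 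Everything except this junction analysis is just the shift identity applied endpoint by endpoint; the real content is seeing that $\min\{m_{r-1},m_{r+p-1}\}$ is the binding constraint simultaneously in both directions, which is exactly what the definition of $l$ encodes.
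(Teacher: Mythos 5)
Your proposal is correct and follows essentially the same route as the paper: reduce to the interval partition of $\mathbb{N}^0$ determined by $\Gamma(\tau)$, establish the shift identity $c_{u+p}=c_u+d$ (valid down to $u=r-1$), handle the interior region $x,y\geq k$ by $I_u+d=I_{u+p}$, and treat the junction interval $I_{r-1}$ separately while noting the two regimes never mix within a class. Your analysis of the junction via the top portion of $I_{r-1}$ of length $\min\{m_{r-1},m_{r+p-1}\}$ is in fact more precise than the paper's (whose displayed inequality $c_{r+p-1}\leq x+d\leq y+d\leq c_{r+p}$ in that case contains an index slip), and you correctly observe that the stated $l(\tau)$ exceeds the tight threshold $c_{r-1}-\min\{m_{r-1},m_{r+p-1}\}+1$ by $m_r$, which is all the lemma needs.
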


\begin{lemma}\label{perlem2}
Let $\tau$ be a congruence on $E(B).$  Suppose there are $k,d$ such that for $x,y\geq k$ we have $(x,x)\ \tau\ (y,y)$ if and only if $(x+d,x+d)\ \tau\ (y+d,y+d).$  Then either $\tau$ has an infinite congruence class, or $\tau$ is eventually periodic with period $d^\prime\mid d.$
\begin{proof}
Suppose $\tau$ has no infinite congruence class.  Let $\Gamma(\tau)=\{m_1,m_2,\dots\},$ and $\Xi(\tau)=\{c_1,c_2,\dots\}$.  Let $r$ be the least integer such that $k\leq c_r,$ and let $q=c_r.$  We claim that $q\leq k+d.$  Suppose not, then $(k,k)\ \tau\ (k+x,k+x)$ for $x=0,2,\dots,d+1.$  But then as $(k+1,k+1)\ \tau\ (k+2,k+2),$ we obtain $(k+d+1,k+d+1)\ \tau\ (k+d+2,k+d+2).$  Thus $(k,k)\ \tau\ (k+d+2,k+d+2).$  Inductively we get that $(k,k)\ \tau\ (k+x,k+x)$ for all $x\geq 0.$  This gives an infinite congruence class, which is a contradiction, so we certainly have that $q\leq k+d.$

We note that then $(q,q)\ \!\!\!\not\!\!\tau\ (q+1,q+1)$ and thus by the hypothesis we have that $(q+d,q+d)\ \! \not\!\tau\ (q+d+1,q+d+1).$  Therefore $q+d=c_{r+p}$ for some $p\geq 1.$  We then note that by a similar argument $c_{r+1}=c_{r+p+1}.$  Then 
$$m_{r+1}= c_{r+1}-c_r=(c_{r+1}+d)-(c_{r}+d) = c_{r+p+1}-c_{r+p}=m_{r+1+p}. $$
By an inductive argument it then follows that $m_s=m_{s+p}$ for all $s\geq r,$ thus $\tau$ is eventually periodic.  We also note that $d=\sum_{i=1}^{p-1} m_i.$  
Suppose $r^\prime\leq r$ and $p^\prime\leq p$ are chosen minimum such that $m_s=m_{s+p^\prime}$ for all $s\geq r^\prime.$  Then certainly $p^\prime\mid p$ and with $d^\prime=\sum_{i=1}^{p^\prime-1} m_i$ we obtain $d^\prime\mid d.$  We also note that $c_r\leq k+d,$ so certainly $k^\prime=c_{r^\prime}\leq k+d.$
\end{proof}
\end{lemma}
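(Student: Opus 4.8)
The statement is the converse of \cref{perlem}, so the plan is to start from the shift-invariance hypothesis and recover the periodic structure of the block sizes. First I would dispose of the easy alternative by assuming outright that $\tau$ has no infinite congruence class. Since $E(B)$ is a chain, every $\tau$-class is convex, so $\tau$ is completely encoded by its strictly increasing sequence of class maxima $c_1 < c_2 < \cdots$ together with the block sizes $m_u = c_u - c_{u-1}$ (with the convention $c_0 = -1$); absence of an infinite class just says this sequence is infinite. The first real step is to translate the hypothesis into a statement about these maxima: applying it to the adjacent pair $y = x+1$ shows that for $x \geq k$ we have $(x,x)\ \tau\ (x+1,x+1)$ if and only if $(x+d,x+d)\ \tau\ (x+1+d,x+1+d)$, that is, $x$ is a class maximum exactly when $x+d$ is. Thus the set of class maxima lying above $k$ is invariant under translation by $d$.

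The hard part is getting the induction started, and this is precisely where the infinite-class alternative must be excluded: translation-invariance of the maxima is vacuous until one knows that a maximum actually occurs within a window of length $d$ above $k$. I would therefore let $r$ be least with $c_r \geq k$ and show that $c_r \leq k+d$. If not, then $[k, k+d+1]$ would contain no class maximum, so $(k,k),(k+1,k+1),\dots,(k+d+1,k+d+1)$ would all be $\tau$-related; feeding $(k+1,k+1)\ \tau\ (k+2,k+2)$ through the shift hypothesis gives $(k+d+1,k+d+1)\ \tau\ (k+d+2,k+d+2)$, and transitivity then pushes the class of $(k,k)$ one step further. Iterating this produces an infinite class, contrary to assumption. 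This bootstrapping is the crux of the argument; once a maximum $c_r \leq k+d$ is pinned down, the rest is a routine translation argument.

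With such a $c_r$ in hand I would run the translation forwards: since $(c_r,c_r)\ \not\!\tau\ (c_r+1,c_r+1)$, the hypothesis forces a break at $c_r+d$, so $c_r+d = c_{r+p}$ for some $p \geq 1$, and iterating gives $c_{u+p} = c_u + d$ for all $u \geq r$. Differencing consecutive maxima then yields $m_{s+p} = m_s$ for all $s \geq r+1$, so $\tau$ is eventually periodic, with one block-period spanning $p$ classes of total size $\sum_{j=r+1}^{r+p} m_j = c_{r+p} - c_r = d$. Finally, to see that the minimal period $d'$ divides $d$, I would invoke the fact that the minimal period $p'$ of the eventually periodic integer sequence $(m_s)$ divides every period, in particular $p' \mid p$; summing the block sizes over the $p/p'$ consecutive copies of the minimal repeating pattern gives $d = (p/p')\,d'$, whence $d' \mid d$.
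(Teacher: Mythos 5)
Your proposal is correct and follows essentially the same route as the paper's proof: the same bootstrapping contradiction to locate a class maximum $c_r$ in $[k,k+d]$, the same forward translation giving $c_{u+p}=c_u+d$ and hence $m_{s+p}=m_s$, and the same divisibility argument via the minimal period $p'\mid p$. Your indexing of the identity $d=\sum_{j=r+1}^{r+p}m_j$ is in fact cleaner than the paper's.
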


\begin{prop}
Let $\tau$ be a congruence on $E(B)$ such $\tau$ has an infinite congruence class with largest idempotent $(n,n).$  Then $N(\tau)= T_{n,1}.$
\begin{proof}
We first note that if $x,y\geq n$ then $(x,y)(s,s)(y,x)=(z_s,z_s),$ where $z_s=\max\{x-y+s,x\}\geq x.$  Hence if $(s,s)\ \tau\ (t,t)$ we have $z_s,z_t\geq n,$ thus $(z_s,z_s)\ \tau\ (z_t,z_t)$ thus $(x,y)\in N(\tau)$ thus $T_{n,1}\subseteq N(\tau).$

Suppose that $(x,y)\in B,$ with $x< n\leq y.$  Then $(y,y)\ \tau\ (n+y,n+y).$  However we observe that $(x,y)(y,y)(y,x)=(x,x),$ and $(x,y)(n+y,n+y)(y,x)=(x+n,x+n).$  Since $x<n$ we have that $(x,x)\ \not\!\tau\ (x+n,x+n),$ thus $(x,y)\notin N(\tau).$

Suppose finally that $x<y<n.$  We note that $(n,n)\ \tau\ (n+y,n+y).$  Then $(x,y)(n,n)(y,x)=(x-y+n,x-y+n),$ and $(x,y)(n+y,n+y)(y,x)=(n+x,n+x).$  Then as $x-y+n< n \leq n+x,$ we have that $(x-y+n,x-y+n)\ \not\!\tau\ (x+n,x+n).$  Hence $(x,y)\notin N(\tau).$  Thus $N(\tau)=T_{n,1}.$
\end{proof}
\end{prop}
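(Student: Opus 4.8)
The plan is to prove the equality by two inclusions, with the whole argument turning on a single conjugation computation in $B$. Writing $a=(x,y)$ so that $a^{-1}=(y,x)$, a direct multiplication gives
$$(x,y)(s,s)(y,x)=(z_s,z_s),\qquad z_s=\max\{x-y+s,\ x\},$$
and by symmetry in $x$ and $y$ one gets $(y,x)(s,s)(x,y)=(w_s,w_s)$ with $w_s=\max\{y-x+s,\ y\}$. The structural feature to exploit is that $z_s$ is floored at $x$: it equals $x$ for $s\le y$ and exceeds $x$ only for larger $s$. Since the hypothesis says the $\tau$-class of $(n,n)$ is exactly $\{(m,m)\mid m\ge n\}$, deciding whether $(x,y)$ normalises $\tau$ reduces throughout to tracking whether these conjugated indices fall below or at/above the threshold $n$.

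For the inclusion $T_{n,1}\subseteq N(\tau)$, suppose $x,y\ge n$. Then $z_s\ge x\ge n$ and $w_s\ge y\ge n$ for every $s$, so conjugating any idempotent by $a$ or by $a^{-1}$ always yields an idempotent of index at least $n$, i.e. an element of the single infinite $\tau$-class. Hence all such conjugates are $\tau$-related to one another, so both the $N_R(\tau)$ and $N_L(\tau)$ conditions hold, and $(x,y)\in N(\tau)$. Fullness of $N(\tau)$ then accounts for the diagonal idempotents, yielding $T_{n,1}\subseteq N(\tau)$.

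For the reverse inclusion I would show that every $(x,y)$ with $x\ne y$ and $\min\{x,y\}<n$ violates the $N_R(\tau)$ condition, hence lies outside $N(\tau)$; since $N(\tau)$ is closed under inverses it suffices to treat $x<y$, so $x<n$. If $x<n\le y$, then $(y,y)$ and $(n+y,n+y)$ are $\tau$-related (both of index $\ge n$), yet conjugating them by $a$ gives $(x,x)$ and $(x+n,x+n)$, which lie on opposite sides of $n$ and so are not $\tau$-related. If instead $x<y<n$, the element $(y,y)$ no longer lies in the infinite class, so I would instead use the $\tau$-related pair $(n,n)$ and $(n+y,n+y)$, whose $a$-conjugates are $(x-y+n,x-y+n)$ and $(x+n,x+n)$; here $x-y+n<n\le x+n$ because $x<y<n$, again separating them.

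I expect this case split to be the main obstacle: the floor-at-$x$ behaviour of conjugation is precisely what forces failure whenever $x<n$, but one must choose source idempotents that are genuinely $\tau$-equivalent (both of index $\ge n$) while arranging that their conjugate images straddle $n$, and the correct choice of witness differs between the two cases. Combining the two inclusions, and noting that the remaining elements $(x,y)$ (diagonal, or with both coordinates $\ge n$) are already classified above, gives $N(\tau)=T_{n,1}$.
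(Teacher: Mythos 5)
Your proposal is correct and follows essentially the same route as the paper: the same conjugation formula $(x,y)(s,s)(y,x)=(\max\{x-y+s,x\},\cdot)$, the same observation that for $x,y\ge n$ all conjugates land in the infinite class, and the same two witnesses $(y,y)\ \tau\ (n+y,n+y)$ and $(n,n)\ \tau\ (n+y,n+y)$ for the cases $x<n\le y$ and $x<y<n$ respectively. The only cosmetic difference is that you make explicit the reduction to $x<y$ via closure of $N(\tau)$ under inverses, which the paper leaves implicit.
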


We next compute the normaliser for an eventually periodic trace.

\begin{lemma}
Let $\tau$ be $d$-periodic after $k,$ and let $l=l(\tau).$  Then $N(\tau)= T_{l,d}.$
\begin{proof}
Suppose $(a,a+b)\in N(\tau) \cap B\backslash T_{k,d}.$  We observe that for $s\geq a,$ we have $(a+b,a)(s,s)(a,a+b)=(s+b,s+b),$ and for $s\geq a+b$ we have $(a,a+b)(s,s)(a+b,a)=(s-b,s-b).$  But then for $s,t\geq a$ we have that $$(s,s)\ \tau\ (t,t) \iff (s+b,s+b)\ \tau\ (t+b,t+b).$$
Then applying \cref{perlem2} we obtain that $\tau$ is $d^\prime$-periodic after $k^\prime$ with $d^\prime\mid b.$  As we know that $\tau$ has period $d$ we have that $d\mid b$ so if $a\geq k$ then $(a,a+b)\in T_{k,d},$ and we have a contradiction.

Thus we may assume that $a\leq k.$  Since $(a,a+b)\in N(\tau)$ and $N(\tau)$ is a full inverse subsemigroup containing $T_{k,d}$ and $d\mid b$ we have that $(a,a+d)\in N(\tau).$  Suppose that $x\geq a$ and  $(x,x)\ \tau\ (x+1,x+1).$  Then
$$(x+d,x+d)=(a+d,a)(x,x)(a,a+d)\ \tau\ (a+d,a)(x+1,x+1)(a,a+d) = (x+d+1,x+d+1).$$
Then we note that $x+d\geq a+d,$ so
$$(x,x)= (a,a+d)(x+d,x+d)(a+d,a)\ \tau\ (a,a+d)(x+d+1,x+d+1)(a+d,a) = (x+1,x+1). $$
Thus for $x\geq a$ we have $(x,x)\ \tau\ (x+1,x+1)$ if and only if $(x+d,x+d)\ \tau\ (x+1+d,x+1+d).$

Suppose that $a<l,$ then as $N(\tau)$ is full we have $(l-1,l-1+d)\in N(\tau).$  However from the definition of $l$ exactly one of $(l-1,l-1)\ \!\!\not\!\tau\ (l,l)$ or $(l-1+d,l-1+d)\ \!\!\not\!\tau\ (l+d,l+d).$  But with $l-1=x$ this gives a contradiction.  Hence we must have $a\geq l,$ and thus $(a,a+b)\in T_{l.d}$

We now show that $T_{l,d}\subseteq N_{\tau}.$  Take $(a,a+bd)\in T_{l,d}$ and suppose that $(s,s)\ \tau\ (t,t),$ then we consider
\begin{align*}
(a,a+bd)(s,s)(a+bd,a) &= (x-bd,x-bd)  & \text{   for } x=\max\{a+bd,s\} \\
(a,a+bd)(t,t)(a+bd,a) &= (y-bd,y-bd)  & \text{   for } y=\max\{a+bd,t\}
\end{align*}
We need that $(x-bd,x-bd)\ \tau\ (y-bd,y-bd).$  If $s \leq t\leq a+bd$ then this is immediate as $x-bd=a=y-bd.$  Suppose $a+bd\leq s\leq t,$ then in particular $l \leq a \leq s-bd\leq t-bd.$  Then by repeated application of \cref{perlem} we have that $(s-bd,s-bd)\ \tau\ (t-bd,t-bd)$ if and only if $(s,s)\ \tau\ (t,t).$  Suppose finally that $s\leq a+bd\leq t.$  Then $ (a+bd,a+bd)\ \tau\ (t,t)$ so we apply the previous argument to get that $(a,a)\ \tau\ (t-bd,t-bd).$  

Similarly we can show that if $(s,s)\ \tau\ (t,t)$ then $(a+bd,a)(s,s)(a,a+bd)\ \tau\ (a+bd,a)(t,t)(a,a+bd),$ and thus we obtain $(a,a+bd)\in N(\tau).$  It is then clear that $T_{l,d}\subseteq N(\tau).$
\end{proof}
\end{lemma}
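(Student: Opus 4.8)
The approach is to translate membership in $N(\tau)$ into a statement about how conjugation shifts the chain of idempotents $E(B)=\{(s,s)\}$, and then to read this off from the periodicity data of $\tau$ recorded by \cref{perlem} and \cref{perlem2}. Since $N(\tau)$ is a full inverse subsemigroup it contains every idempotent and is closed under inverses, so it suffices to decide which elements $(a,a+b)$ with $b>0$ lie in it. A direct calculation in $B$ gives, for $w=(a,a+b)$, that $w^{-1}(s,s)w=(s+b,s+b)$ for $s\geq a$ and $w(s,s)w^{-1}=(s-b,s-b)$ for $s\geq a+b$, the conjugate collapsing onto a fixed idempotent below these thresholds. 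Combining the two conjugations shows that $(a,a+b)\in N(\tau)$ forces, for all $s,t\geq a$,
$$(s,s)\ \tau\ (t,t)\iff (s+b,s+b)\ \tau\ (t+b,t+b),$$
that is, shift-by-$b$ is faithful on $\tau$ above $a$.

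I would first prove $T_{l,d}\subseteq N(\tau)$. Given $(a,a+bd)\in T_{l,d}$ with $a\geq l$, the conjugation formula sends the $\tau$-class of $(s,s)$ to that of the idempotent at $\max\{a+bd,s\}-bd$. I split into the cases $s\leq t\leq a+bd$ (both collapse to $(a,a)$, so there is nothing to prove), $a+bd\leq s\leq t$ (here $s-bd,t-bd\geq a\geq l$, so a $b$-fold application of \cref{perlem} gives the claim), and $s\leq a+bd\leq t$ (where a $\tau$-class is an interval in the chain, forcing $(a+bd,a+bd)\ \tau\ (t,t)$ and reducing to the previous case). The dual computation handles $w^{-1}(s,s)w$, so conjugation preserves $\tau$ and $T_{l,d}\subseteq N(\tau)$. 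As $N(\tau)$ is a full inverse subsemigroup properly containing $E(B)$, \cref{byinv} gives $N(\tau)=T_{k',d'}$ for some $k',d'$, and the containment just proved forces $k'\leq l$ and $d'\mid d$.

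It remains to pin down $k'=l$ and $d'=d$. Applying the faithfulness condition of the first paragraph to the generator $(k',k'+d')\in N(\tau)$ places us in the hypothesis of \cref{perlem2} with shift $d'$, whence $\tau$ has a period dividing $d'$; since $d$ is the minimal period this yields $d\mid d'$, and together with $d'\mid d$ we conclude $d'=d$. Finally, suppose $k'<l$. Then $(l-1,l-1+d)\in T_{k',d}=N(\tau)$, so shift-by-$d$ is faithful above $l-1$; but $l=l(\tau)$ is defined precisely so that exactly one of $(l-1,l-1)\ \tau\ (l,l)$ and $(l-1+d,l-1+d)\ \tau\ (l+d,l+d)$ holds, contradicting faithfulness at $s=l-1,\ t=l$. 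Hence $k'\geq l$, and with $k'\leq l$ we get $k'=l$, so $N(\tau)=T_{l,d}$.

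The genuinely delicate point is this last step. The divisibility $d'=d$ is a routine consequence of \cref{perlem2}, but locating the base point at exactly $l$ rather than at $k$ rests on the sharpness of the constant $l=c_r-\min\{m_{r-1},m_{r+p-1}\}+1$, namely that shift-by-$d$ first fails at level $l-1$. Establishing this requires careful tracking of where the periodic block of class-sizes begins relative to the shift, which is exactly what the \cref{perlem}/\cref{perlem2} pair is engineered to control; the classification \cref{byinv} of full inverse subsemigroups is the other essential ingredient, since it is what lets an arbitrary witness $(a,a+b)\in N(\tau)$ with $a<l$ be transported up to the critical element $(l-1,l-1+d)$.
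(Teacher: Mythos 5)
Your proof is correct, and it reaches the same two essential facts as the paper --- that shift-by-$d$ is $\tau$-faithful above $l$ (via \cref{perlem}), and that it fails at $l-1$ (via the defining property of $l(\tau)$ and the witness $(l-1,l-1+d)$) --- but it organises them differently. The paper takes an arbitrary $(a,a+b)\in N(\tau)\setminus T_{k,d}$, extracts $d\mid b$ from \cref{perlem2}, splits on whether $a\geq k$ or $a\leq k$, manufactures the element $(a,a+d)\in N(\tau)$, and only then derives the contradiction at $l-1$; the containment $T_{l,d}\subseteq N(\tau)$ comes second. You instead prove $T_{l,d}\subseteq N(\tau)$ first, then invoke the classification \cref{byinv} to write $N(\tau)=T_{k',d'}$ outright, read off $k'\leq l$ and $d'\mid d$ from the containment ordering, and pin down the two parameters by applying \cref{perlem2} to the single generator $(k',k'+d')$ and testing $(l-1,l-1+d)$. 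This buys a cleaner argument: you avoid the case split on $a$ versus $k$ and the intermediate step of passing from $(a,a+b)$ to $(a,a+d)$, since the classification hands you a canonical element to work with. The cost is an extra dependence on \cref{byinv}, which the paper's element-wise argument does not need at this point. Both proofs rest equally on the unexpanded claim that, by the choice of $l=c_r-\min\{m_{r-1},m_{r+p-1}\}+1$, exactly one of $(l-1,l-1)\ \tau\ (l,l)$ and $(l-1+d,l-1+d)\ \tau\ (l+d,l+d)$ holds; you are right to flag this as the delicate point, and the paper asserts it with no more justification than you do.
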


\begin{prop}
Let $\tau$ be a congruence on $E=E(B),$ with no infinite congruence class. Then $\tau$ is eventually periodic if and only if $N(\tau)\neq E(B).$ 
\begin{proof}
We first note that if $\tau$ is $d$-periodic after $k$ then we have $T_{k,d}\subseteq N(\tau).$  Thus $N(\tau)\neq E(B).$

Suppose now that $N(\tau)\neq E(B).$  Choose $a\geq0, b\geq 1$ with $(a,a+b)\in N(\tau).$  If $v\geq a+b$ then $(a+b,a)(v,v)(a,a+b)=(v+b,v+b),$ and $(a,a+b)(v,v)(a+b,a)=(v-b,v-b).$  Then we note that for $s,t\geq a+b$ we have that $$ (s,s)\ \tau\ (t,t) \iff (s+b,s+b)\ \tau\ (t+b,t+b). $$
Thus by \cref{perlem2} since $\tau$ has no infinite congruence class we have that $\tau$ is eventually periodic.
\end{proof}
\end{prop}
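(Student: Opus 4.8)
The plan is to prove the biconditional by treating its two directions separately, with essentially all of the substance living in the reverse implication. For the forward direction, suppose $\tau$ is $d$-periodic after $k$ for some $k\geq0$ and $d\geq1$. The preceding lemma computing the normaliser of an eventually periodic trace gives $N(\tau)=T_{l,d}$ with $l=l(\tau)$; since $d\geq1$ the element $(l,l+d)$ lies in $T_{l,d}$ and is not idempotent, so $N(\tau)\neq E(B)$. Equivalently one may simply quote the containment $T_{k,d}\subseteq N(\tau)$ established there.

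For the reverse direction I would argue as follows. Assuming $N(\tau)\neq E(B)$, the classification of full inverse subsemigroups of $B$ in \cref{byinv} forces $N(\tau)=T_{k,d}$ for some $k\geq0$ and $d\geq1$, so in particular $N(\tau)$ contains a non-idempotent element, which I write as $(a,a+b)$ with $b\geq1$. The goal is to convert membership of this element in the normaliser into the shift hypothesis of \cref{perlem2}. Since elements of $N(\tau)$ conjugate $\tau$-classes to $\tau$-classes, I would conjugate an arbitrary idempotent $(v,v)$ by $(a,a+b)$ and by its inverse $(a+b,a)$; a direct computation in $B$ gives, for $v\geq a+b$,
$$(a+b,a)(v,v)(a,a+b)=(v+b,v+b),\qquad (a,a+b)(v,v)(a+b,a)=(v-b,v-b),$$
so that conjugation realises both a forward and a backward shift by $b$ on the range $v\geq a+b$. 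Applying these to a $\tau$-related pair $(s,s)\ \tau\ (t,t)$ and using that $(a,a+b)\in N(\tau)$ then yields, for all $s,t\geq a+b$,
$$(s,s)\ \tau\ (t,t)\iff(s+b,s+b)\ \tau\ (t+b,t+b).$$

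This is exactly the hypothesis of \cref{perlem2} with threshold $a+b$ and shift $b$. Since $\tau$ is assumed to have no infinite congruence class, that lemma concludes that $\tau$ is eventually periodic (with period dividing $b$), completing the proof. The step I expect to require the most care is the conjugation bookkeeping: one must use both $(a,a+b)$ and its inverse and restrict to indices at least $a+b$, since only there do the two conjugations act as genuine inverse shifts rather than collapsing onto $(a+b,a+b)$, and it is precisely this that upgrades the one-way implications into the biconditional needed to invoke \cref{perlem2}.
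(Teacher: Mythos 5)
Your proposal is correct and follows essentially the same route as the paper: the forward direction by the containment $T_{k,d}\subseteq N(\tau)$, and the reverse direction by conjugating idempotents by a non-idempotent normaliser element and its inverse to obtain the two-way shift condition for $s,t\geq a+b$, then invoking \cref{perlem2}. The only cosmetic difference is that you route through \cref{byinv} to exhibit a non-idempotent element of $N(\tau)$, where the paper simply picks one directly.
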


We have then described the normaliser of every congruence on $E(B).$

\begin{lemma}
Let $\tau$ be a congruence on $B$ and let $\nu_L,\nu_R$ be minimum left and right congruences on $B$ with this trace.  Then
\begin{gather*}
(a,b)\ \nu_L\ (c,d) \iff (b,b)\ \tau\ (d,d), \text{ and } b-a=d-c,\\
(a,b)\ \nu_R\ (c,d) \iff (a,a)\ \tau\ (c,c), \text{ and } b-a=d-c.
\end{gather*}
\begin{proof}
This is a straightforward application of the description of minimum one-sided congruences given in \cref{M1}.
\end{proof}
\end{lemma}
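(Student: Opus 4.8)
The plan is to apply the closed form for the minimum left congruence with a given trace from \cref{M1} directly inside the bicyclic monoid. First I would record the two idempotents attached to an element $(a,b)\in B$: a short computation with the stated multiplication gives $(a,b)^{-1}(a,b)=(b,a)(a,b)=(b,b)$ and $(a,b)(a,b)^{-1}=(a,b)(b,a)=(a,a)$, so that the ``domain'' idempotent of $(a,b)$ is $(b,b)$ and its ``range'' idempotent is $(a,a)$. This is the only structural input needed before specialising the formula.

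Specialising $\nu_\tau=\{(x,y)\mid x^{-1}x\ \tau\ y^{-1}y,\ \exists e\in E:\ e\ \tau\ x^{-1}x,\ xe=ye\}$ to $x=(a,b)$ and $y=(c,d)$, the trace condition $x^{-1}x\ \tau\ y^{-1}y$ becomes exactly $(b,b)\ \tau\ (d,d)$. It then remains to show that, under this hypothesis, the existence of an idempotent $e=(m,m)$ with $(m,m)\ \tau\ (b,b)$ and $(a,b)(m,m)=(c,d)(m,m)$ is equivalent to $b-a=d-c$. For the forward implication I would compute $(a,b)(m,m)=(a-b+\max\{b,m\},\ \max\{b,m\})$ and likewise $(c,d)(m,m)=(c-d+\max\{d,m\},\ \max\{d,m\})$; equating second coordinates forces $\max\{b,m\}=\max\{d,m\}$, and then equating first coordinates yields $a-b=c-d$, i.e. $b-a=d-c$, regardless of the particular witness $m$.

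The main (and only mildly delicate) point is the converse, namely producing a witness $e$: given $(b,b)\ \tau\ (d,d)$ I would take $m=\max\{b,d\}$, so that $(m,m)$ is one of $(b,b)$ or $(d,d)$ and therefore lies in the common $\tau$-class, while $m\ge b$ and $m\ge d$ collapse both products to $(a-b+m,\ m)$ and $(c-d+m,\ m)$; these agree precisely when $b-a=d-c$. Note that this step needs nothing about the global structure of $\tau$ beyond the fact that the larger of the two $\tau$-related idempotents is again in their class, so no appeal to convexity of congruence classes on the chain $\mathbf{C}$ is required. This establishes the stated characterisation of $\nu_L$.

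Finally, for $\nu_R$ I would invoke the lattice isomorphism $\rho\mapsto\rho_{-1}$ between left and right congruences, which preserves the trace; since $\nu_R$ is the minimum right congruence with trace $\tau$ and the isomorphism is order preserving, $\nu_R=(\nu_L)_{-1}$. Applying the inverse map $(a,b)\mapsto(b,a)$ to the description of $\nu_L$ transposes the two coordinates of each element, turning the condition $(b,b)\ \tau\ (d,d)$ into $(a,a)\ \tau\ (c,c)$ while leaving the displacement condition $b-a=d-c$ unchanged, which is exactly the claimed form for $\nu_R$. Overall the only real obstacle is the bookkeeping in the witness step; every other assertion is forced by matching coordinates in the bicyclic multiplication.
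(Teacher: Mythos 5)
Your proposal is correct and is exactly the argument the paper intends: the paper's proof is the one-line remark that the lemma follows from the closed form for $\nu_\tau$ in Theorem 2.5, and you have simply carried out that specialisation in $B$, including the witness $e=(\max\{b,d\},\max\{b,d\})$ for the converse and the transfer to $\nu_R$ via $\rho\mapsto\rho_{-1}$. All the computations check out, so this is the same approach with the details filled in.
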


Given a trace, we now want to describe which full inverse subsemigroups contained in the normaliser are saturated by $\nu.$  Given a congruence $\tau$ which contains an infinite congruence class containing maximum idempotent $(n,n)$ we recall that $N(\tau)=T_{n,1}.$  We then note that on $T_{n,1}$ we have that $(a,b)\ \nu\ (c,d)$ if $b-a=d-c.$  Thus the subsemigroups saturated by $\nu$ are precisely $T_{n,d}$ for $d\geq 1.$  We also note that as a lattice this is isomorphic to $\mathbf{N}.$

If $\tau$ has no infinite congruence class and is not eventually periodic, then the normaliser is $E(B),$ so the only inverse congruence pair containing $\tau$ is $(\tau,E(B)).$

Suppose $\tau$ is eventually periodic, with $\Xi(\tau)=\{c_1,c_2,\dots\}$ and $r,p$ defined as usual.  Also suppose $\tau$ has period $d$ and let $l=l(\tau).$  Then $N(\tau)=T_{l,d}.$ We then observe that $T_{k,c}\subseteq T_{l,d}$ if $k\leq l$ and $c\mid d.$  It is also straightforward that $T_{k,c}$ is saturated by $\nu$ only if $k=l$ or $k=c_u+1$ for some $u\geq r.$  We note that as a lattice this is isomorphic to $(\mathbf{N}\times \mathbf{C})^0\cong \mathfrak{V}(B).$ 

\begin{theorem}
Let $\tau$ be a congruence on $E(B)$ with $\Xi(\tau)=\{c_1,c_2,\dots\}.$  Then $(\tau,T)$ is an inverse congruence pair for $B$ if and only if at least one of the following holds:
\begin{enumerate}[label={(\roman*)}]
\item $T=E(B);$
\item $\tau$ has an infinite congruence class $\{(x,x)\ |\ x\geq n\}$ and there is $c\geq 1$ with $T=T_{n,c};$
\item $\tau$ is $d$-periodic after $k$ and $T=T_{j,c}$ with $d\mid c$ and either $j=l(\tau)$ or $j=c_u+1$ for some $u$ with $c_u\geq k-1.$
\end{enumerate}
\end{theorem}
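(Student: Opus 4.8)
The plan is to read the classification straight off the saturation criterion of \cref{sat lem}, which tells us that $(\tau,T)$ is an inverse congruence pair exactly when $T$ is a full inverse subsemigroup contained in $N(\tau)$ and saturated by $\psi=\nu_\tau|_{N(\tau)}$. By \cref{byinv} the only candidates for $T$ are $E(B)$ and the subsemigroups $T_{j,c}$, so the whole argument reduces to deciding, for each kind of trace, which of these are saturated. First I would dispose of $T=E(B)$: a direct check shows that in $B$ a product $(a,b)(s,s)$ equals $(a-b+t,t)$ with $t=\max\{b,s\}$, which is idempotent only when $a=b$. Hence the hypothesis of \ref{lcpdeff2} with $T=E(B)$ already forces $x\in E(B)$, and since $E(B)\subseteq N(\tau)$ always (the normaliser is full), $(\tau,E(B))$ is an inverse congruence pair for every $\tau$. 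This accounts for case (i) uniformly.

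Next I would invoke the trichotomy supplied by the preceding normaliser computations: either $\tau$ has an infinite class, and $N(\tau)=T_{n,1}$ for the maximal idempotent $(n,n)$ of that class; or $\tau$ has no infinite class but is eventually periodic, and $N(\tau)=T_{l,d}$ with $l=l(\tau)$; or $\tau$ has no infinite class and is not eventually periodic, and $N(\tau)=E(B)$. In the last situation the only $T\subseteq N(\tau)=E(B)$ is $E(B)$ itself, already handled by (i); and since such a $\tau$ meets neither hypothesis of (ii) nor of (iii), the stated classification holds vacuously.

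For the infinite-class case I would use the description $(a,b)\ \nu_L\ (c,d)\iff(b,b)\ \tau\ (d,d)$ and $b-a=d-c$. Every idempotent $(x,x)$ with $x\ge n$ lies in the single infinite class, so the trace condition is automatic on $T_{n,1}$ and the $\psi$-classes there are precisely the sets of fixed difference $b-a$, each reaching down to an element whose smaller coordinate equals $n$. A subsemigroup $T_{j,c}\subseteq T_{n,1}$ is therefore saturated if and only if it omits no member of a difference-class it meets, which forces $j=n$; the saturated proper subsemigroups are exactly the $T_{n,c}$, giving (ii).

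The main work, and the step I expect to be the genuine obstacle, is the eventually periodic case, where $N(\tau)=T_{l,d}$ and the trace condition $(b,b)\ \tau\ (d,d)$ appearing in $\psi$ is no longer free. Here, for $T_{j,c}$ with $d\mid c$ (so that $T_{j,c}\subseteq T_{l,d}$), I would analyse saturation by sliding an element of $T_{j,c}$ down its $\psi$-class: fixing the difference $\delta=b-a$, which is a multiple of $d$, the class consists of the $(x,x+\delta)$ with $x\ge l$ and $(x+\delta,x+\delta)\ \tau\ (b,b)$, and since $\psi$ is two-sided and inverse-closed it suffices to treat $\delta\ge 0$. Applying \cref{perlem} to shift the trace condition down by the multiple $\delta$ of $d$ reduces the question to whether any $x$ with $l\le x<j$ satisfies $(x,x)\ \tau\ (j,j)$; saturation thus holds precisely when $(j,j)$ is the least idempotent of its $\tau$-class among those $\ge l$. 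The left endpoints of the $\tau$-classes in the periodic region are exactly the integers $c_u+1$, together with the cut-off $l$, which produces the condition $j=l(\tau)$ or $j=c_u+1$ with $c_u\ge k-1$, yielding (iii). The converse direction, that these $j$ really do make $T_{j,c}$ a union of $\psi$-classes, would follow from the reverse application of \cref{perlem} (and, where needed, \cref{perlem2}), exactly as in the computation establishing $N(\tau)=T_{l,d}$; the delicate part throughout is the boundary bookkeeping that pins down which cut-offs $c_u+1$ survive once one restricts to the range $x\ge l$ where the periodicity lemmas apply.
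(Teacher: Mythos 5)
Your proposal is correct and follows essentially the same route as the paper: the paper likewise derives the classification from \cref{sat lem} together with the preceding computations of $N(\tau)$ and of $\nu_L$, checking in each of the three trace cases which full inverse subsemigroups $T_{j,c}\subseteq N(\tau)$ are unions of $\psi$-classes. In fact your saturation analysis in the eventually periodic case (reducing via \cref{perlem} to whether $(j,j)$ is the earliest idempotent of its $\tau$-class in the range $\geq l$) is more explicit than the paper's, which dismisses that step as straightforward.
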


\section{Finitely generated congruences}

Given a set $Z\subseteq S$ write $|Z|$ for the inverse subsemigroup generated by $Z.$  Given a generating set $Z$ for an inverse subsemigroup we will assume that $Z=Z^{-1},$ i.e. $Z$ contains all inverses of elements in $Z.$  Given a generating set for a left congruence $R\subseteq S\times S$ we will assume that $R$ is symmetric, i.e. if $(a,b)\in R$ then $(b,a)\in R.$ 

Several properties of semigroups are related to whether one-sided congruences are finitely generated.  We will see that for inverse semigroups finite generation of left congruences is closely tied to finite generation of the trace and the inverse kernel.  Initially we have a technical lemma regarding generating sets for one-sided congruences.

\begin{lemma}\label{gen set}
Let $H\subseteq S\times S,$ be a symmetric set, and let $\rho=\langle H\rangle.$  Then there exists a symmetric set $H^\prime$ such that $\rho=\langle H^\prime\rangle$ and
$$H^\prime\subseteq (E(S)\times E(S))\cup \{ (e,a)\ |\ a\ \mathcal{R}\ e\} \cup \{ (a,e)\ |\ a\ \mathcal{R}\ e\}.$$
Moreover, if $H$ is finite then $H^\prime$ is also finite.
\end{lemma}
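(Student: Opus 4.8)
The plan is to handle $H$ one generating pair at a time, replacing each $(a,b)$ by three pairs whose shapes match the three clauses of the inverse-kernel--trace characterisation in \cref{biggy}. Concretely, for each $(a,b)\in H$ I would form
$$p_1=(a^{-1}a,\ a^{-1}bb^{-1}a),\qquad p_2=(b^{-1}b,\ b^{-1}aa^{-1}b),\qquad p_3=(a^{-1}b,\ a^{-1}bb^{-1}a),$$
and take $H^\prime$ to be the union over all $(a,b)\in H$ of these three pairs together with their reverses. Both entries of $p_1$ and $p_2$ are idempotents, so $p_1,p_2\in E(S)\times E(S)$; and $p_3$ has the form $(c,cc^{-1})$ with $c=a^{-1}b$, so $c\ \mathcal{R}\ cc^{-1}$ and $p_3\in\{(a,e)\mid a\ \mathcal{R}\ e\}$, its reverse lying in $\{(e,a)\mid a\ \mathcal{R}\ e\}$. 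Thus $H^\prime$ is symmetric and sits inside the prescribed set; and if $H$ is finite then $H^\prime$ is a finite union of finite sets, which settles the final sentence immediately.

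For the inclusion $\langle H^\prime\rangle\subseteq\rho$ I would verify $p_1,p_2,p_3\in\rho$ directly from \cref{biggy} applied to $\rho=\rho_{(\tra/(\rho),\ink/(\rho))}$. Since $a\ \rho\ b$, that theorem forces $a^{-1}b\in\ink/(\rho)$, whence $a^{-1}b\ \rho\ (a^{-1}b)(a^{-1}b)^{-1}=a^{-1}bb^{-1}a$ and so $p_3\in\rho$; it also forces $a^{-1}bb^{-1}a\ \tra/(\rho)\ a^{-1}a$ and $b^{-1}aa^{-1}b\ \tra/(\rho)\ b^{-1}b$, and as $\tra/(\rho)\subseteq\rho$ this gives $p_1,p_2\in\rho$. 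The reverses lie in $\rho$ because $\rho$ is symmetric, so $\langle H^\prime\rangle\subseteq\rho$.

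For the reverse inclusion I would set $\rho^\prime=\langle H^\prime\rangle$ and, fixing $(a,b)\in H$, read off the three membership facts in $\rho^\prime$: the idempotent pairs $p_1,p_2\in\rho^\prime$ give $a^{-1}bb^{-1}a\ \tra/(\rho^\prime)\ a^{-1}a$ and $b^{-1}aa^{-1}b\ \tra/(\rho^\prime)\ b^{-1}b$, while $p_3\in\rho^\prime$ says $a^{-1}b\ \rho^\prime\ (a^{-1}b)(a^{-1}b)^{-1}$, i.e. $a^{-1}b\in\ink/(\rho^\prime)$. These are exactly the three defining clauses of $\rho^\prime=\rho_{(\tra/(\rho^\prime),\ink/(\rho^\prime))}$ from \cref{biggy}, so $a\ \rho^\prime\ b$. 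Hence $H\subseteq\rho^\prime$ and $\rho=\langle H\rangle\subseteq\rho^\prime$, giving $\rho=\langle H^\prime\rangle$.

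There is essentially no calculation to grind; the one point I would be careful about is that the reverse inclusion must invoke \cref{biggy} for the newly built congruence $\rho^\prime$ rather than circularly for $\rho$. The real content is the observation that the single relation $a\ \rho\ b$ is logically equivalent to the conjunction of an inverse-kernel membership $a^{-1}b\in\ink/(\rho)$ and two trace relations, and that each of these three atoms is itself witnessed by a generating pair of the allowed shape; once this correspondence is identified, both inclusions are immediate applications of the characterisation theorem, and symmetry of $\rho^\prime$ makes reconstructing $(b,a)$ automatic.
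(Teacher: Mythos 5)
Your proposal is correct and follows essentially the same route as the paper: you replace each $(a,b)\in H$ by exactly the same three pairs (two idempotent pairs carrying the trace data $a^{-1}bb^{-1}a\ \tau\ a^{-1}a$ and $b^{-1}aa^{-1}b\ \tau\ b^{-1}b$, plus the pair $(a^{-1}b,(a^{-1}b)(a^{-1}b)^{-1})$ carrying inverse-kernel membership), and the forward inclusion is obtained from \cref{biggy} just as in the paper. The only divergence is in recovering $a\ \rho'\ b$ from the three new generators: the paper does this by an elementary chain of left multiplications, $a=aa^{-1}a\ \rho'\ aa^{-1}bb^{-1}a=bb^{-1}a\ \rho'\ bb^{-1}aa^{-1}b\ \rho'\ bb^{-1}b=b$, whereas you invoke \cref{biggy} a second time for $\rho'=\langle H'\rangle$; both are valid, and you correctly flag that the second application must be to $\rho'$ rather than $\rho$, so there is no gap.
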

\begin{proof}
Suppose that $a\ \rho\ b.$  By \cref{biggy} we have that $a^{-1}b\in \ink/(\rho),$ hence $a^{-1}b\ \rho\ a^{-1}bb^{-1}a.$  Also from $a\ \rho\ b$ we have that $a^{-1}a\ \rho\ a^{-1}bb^{-1}a$ and $b^{-1}aa^{-1}b\ \rho\ b^{-1}b.$

Conversely we note that if the three relations: $a^{-1}b\ \rho\ a^{-1}bb^{-1}a,\ a^{-1}a\ \rho\ a^{-1}bb^{-1}a,$ and $b^{-1}aa^{-1}b\ \rho\ b^{-1}b$ hold then we also have $a\ \rho\ b.$  Indeed, from $a^{-1}b\ \rho\ a^{-1}bb^{-1}a$ we have $b^{-1}a(a^{-1}b)\ \rho\ b^{-1}a(a^{-1}bb^{-1}a)=b^{-1}a.$  We then observe
$$a=aa^{-1}a\ \rho\  aa^{-1}bb^{-1}a = bb^{-1}a\ \rho\ bb^{-1}aa^{-1}b\ \rho\ bb^{-1}b = b.$$
Hence we may replace each $(a,b)\in H$ with $3$ pairs in the set claimed.
\end{proof}

\begin{corr}
Every finitely generated left congruence $\rho$ on $S$ can be written as the join $\chi\vee \nu_\tau$, where $\tau$ is a finitely generated congruence on $E(S)$ and $\chi$ is a finitely generated idempotent separating left congruence on $S.$
\end{corr}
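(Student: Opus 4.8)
The plan is to run the finite generating set $\rho=\langle H\rangle$ through the normal form of \cref{gen set} and then read off the trace part and the inverse-kernel part of the decomposition directly from the two types of pairs it produces. Concretely, I would first take a finite symmetric generating set $H$ with $\rho=\langle H\rangle$ and apply \cref{gen set} to obtain a finite symmetric set $H'$ with $\rho=\langle H'\rangle$ and
$$H'\subseteq (E(S)\times E(S))\cup \{(e,a)\mid a\ \mathcal{R}\ e\}\cup\{(a,e)\mid a\ \mathcal{R}\ e\}.$$
Then I would split $H'$ into $H_E=H'\cap(E(S)\times E(S))$ and $H_{\mathcal{R}}=H'\setminus(E(S)\times E(S))$. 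By the displayed containment every pair of $H_{\mathcal{R}}$ lies in $\mathcal{R}$, since $a\ \mathcal{R}\ e$ is exactly the condition $aa^{-1}=e=ee^{-1}$; and both $H_E$ and $H_{\mathcal{R}}$ are finite because $H'$ is.

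Next I would set $\tau=\langle H_E\rangle$, the congruence on $E(S)$ generated by the idempotent pairs, and $\chi=\langle H_{\mathcal{R}}\rangle$, the left congruence on $S$ generated by the remaining pairs. Recall that $\mathcal{R}$ is itself a left congruence on $S$: it is an equivalence relation, and it is left compatible because $aa^{-1}=bb^{-1}$ gives $caa^{-1}c^{-1}=cbb^{-1}c^{-1}$, i.e.\ $(ca)(ca)^{-1}=(cb)(cb)^{-1}$. Since $\mathcal{R}$ contains $H_{\mathcal{R}}$ we get $\chi\subseteq\mathcal{R}$, so $\tra/(\chi)$ is trivial and $\chi$ is idempotent separating; being generated by the finite set $H_{\mathcal{R}}$, it is finitely generated. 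Likewise $\tau$ is finitely generated, by $H_E$.

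To glue the pieces together I would use two facts. The first is that $\langle H_E\rangle=\nu_\tau$ as left congruences on $S$: here $\langle H_E\rangle\subseteq\langle\tau\rangle=\nu_\tau$ since $H_E\subseteq\tau$, while the trace of the left congruence $\langle H_E\rangle$ is a congruence on $E$ containing $H_E$, hence contains $\tau$, forcing $\nu_\tau=\langle\tau\rangle\subseteq\langle H_E\rangle$. The second is the elementary lattice identity $\langle A\rangle\vee\langle B\rangle=\langle A\cup B\rangle$ for the join of generated left congruences. Combining these,
$$\rho=\langle H'\rangle=\langle H_E\cup H_{\mathcal{R}}\rangle=\langle H_E\rangle\vee\langle H_{\mathcal{R}}\rangle=\nu_\tau\vee\chi,$$
with $\tau$ a finitely generated congruence on $E(S)$ and $\chi$ a finitely generated idempotent separating left congruence, as required.

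All of the genuinely new content sits in \cref{gen set}, which is already established, so this corollary is essentially immediate bookkeeping. The only step needing a moment's care is the identification $\langle H_E\rangle=\nu_\tau$ — that adjoining idempotent pairs alone yields the trace-minimal left congruence — but this is straightforward given $\nu_\tau=\langle\tau\rangle$ and $\langle\tau\rangle\cap(E\times E)=\tau$; I do not anticipate any real obstacle beyond it.
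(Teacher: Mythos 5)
Your proposal is correct and follows essentially the same route the paper intends: the corollary is stated as an immediate consequence of \cref{gen set}, obtained precisely by splitting the normalised finite generating set $H'$ into its idempotent pairs (generating $\nu_\tau$ for a finitely generated $\tau$) and its $\mathcal{R}$-related pairs (generating a finitely generated idempotent separating left congruence), and taking the join. Your verification that $\langle H_E\rangle=\nu_\tau$ and that $\langle H_{\mathcal{R}}\rangle\subseteq\mathcal{R}$ supplies exactly the bookkeeping the paper leaves implicit.
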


\begin{deff}
A full inverse subsemigroup $T\subseteq S$ is said to be {\em almost finitely generated} if there exists a finite set $X$ such that $T=| X\cup E(S) |.$
\end{deff}

This notion exactly captures which full inverse subsemigroups are inverse kernels of finitely generated idempotent separating congruences.

\begin{lemma}
Let $\chi$ be an idempotent separating left congruence on $S$, then $T=\ink/(\chi)$ is almost finitely generated if and only if $\chi$ is finitely generated.
\begin{proof}
Suppose that $T$ is almost finitely generated.  Suppose $T=| X\cup E|,$ where $X$ is a finite set.  Let $Y= \{(x,xx^{-1})\ |\ x\in X\};$ we claim that $\chi= \langle Y \rangle.$  Certainly $X\subseteq \ink/(\langle Y\rangle),$ and as the inverse kernel is a full inverse subsemigroup we have that $T\subseteq \ink/(\langle Y\rangle).$  Since $\langle Y\rangle$ is an idempotent separating left congruence and is thus determined by its inverse kernel we have that $\chi\subseteq \langle Y\rangle.$  Conversely we note that we certainly have $(x,xx^{-1})\in \chi$ for each $x\in X.$  Thus $Y\subseteq \chi,$ thus $\langle Y\rangle\subseteq \chi.$  Hence the two are equal, and thus $\chi$ is finitely generated.

For the converse we suppose that $\chi$ is finitely generated.  By \cref{gen set} we can choose a finite generating set $Q$ for $\chi$ such that $Q=\{(p,pp^{-1})\ |\ p\in P\}$ for some finite set $P\subseteq S.$  Then we claim that $\ink/(\chi)=| P|.$  It is immediate that $P\subseteq \ink/(\chi).$  Let $\zeta$ be the idempotent separating left congruence with inverse kernel equal to $| P|.$  It suffices to show that $\chi\subseteq \zeta.$  We then note that $(p,pp^{-1})\in \zeta$ for each $p\in P.$  Thus $Q\subseteq \zeta,$ and hence $\langle Q\rangle =\chi\subseteq \zeta.$  Thus we have that $\ink/(\chi)$ is finitely generated.
\end{proof}
\end{lemma}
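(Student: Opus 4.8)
The plan is to exploit the correspondence from \cref{M2}: because $\chi$ is idempotent separating it is determined entirely by its inverse kernel $T$, and for any full inverse subsemigroup $U$ there is a unique idempotent separating left congruence $\chi_U$ with $\ink/(\chi_U)=U$. This reduces every comparison of idempotent separating left congruences to a comparison of their inverse kernels, which is precisely the bridge one needs to pass between generators of $\chi$ and generators of $T$. Throughout I would use that $\mathcal{R}$ is an idempotent separating left congruence, so that $\chi\subseteq\mathcal{R}$, and that the inverse kernel is always a full inverse subsemigroup by \cref{lem1}.

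First I would treat the direction where $T$ is almost finitely generated, say $T=|X\cup E|$ for a finite set $X$. Setting $Y=\{(x,xx^{-1})\mid x\in X\}$, which is finite, I claim $\chi=\langle Y\rangle$. Since $X\subseteq T=\ink/(\chi)$ each $x\ \chi\ xx^{-1}$, so $Y\subseteq\chi$ and hence $\langle Y\rangle\subseteq\chi$. For the reverse inclusion, each generator lies in $\mathcal{R}$, so $\langle Y\rangle$ is idempotent separating; moreover $X\subseteq\ink/(\langle Y\rangle)$, and since the inverse kernel is a full inverse subsemigroup it follows that $T=|X\cup E|\subseteq\ink/(\langle Y\rangle)$. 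As $\chi$ and $\langle Y\rangle$ are both idempotent separating, \cref{M2} (or equivalently \cref{order lem}) converts $\ink/(\chi)\subseteq\ink/(\langle Y\rangle)$ into $\chi\subseteq\langle Y\rangle$, giving equality and hence finite generation of $\chi$.

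For the converse, suppose $\chi$ is finitely generated. Here the key tool is \cref{gen set}: I would first pass to a finite generating set whose pairs lie in $(E\times E)\cup\{(e,a),(a,e)\mid a\ \mathcal{R}\ e\}$. Because $\chi$ is idempotent separating, any idempotent pair $(e,f)$ forces $e=f$ and may be discarded, while every surviving pair has the form $(a,aa^{-1})$; collecting the non-idempotent coordinates into a finite set $P$ yields $Q=\{(p,pp^{-1})\mid p\in P\}$ with $\langle Q\rangle=\chi$. I then claim $T=|P\cup E|$, which exhibits $T$ as almost finitely generated. Clearly $P\subseteq\ink/(\chi)=T$ and $E\subseteq T$ by fullness, so $|P\cup E|\subseteq T$. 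Conversely, letting $\zeta=\chi_{|P\cup E|}$ be the idempotent separating left congruence supplied by \cref{M2}, each $p$ lies in its inverse kernel, so $(p,pp^{-1})\in\zeta$, whence $Q\subseteq\zeta$ and $\chi=\langle Q\rangle\subseteq\zeta$; order-preservation of the inverse kernel map then gives $T\subseteq|P\cup E|$.

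The only genuinely delicate point is matching the word ``almost'' to the fullness of inverse kernels: a full inverse subsemigroup always contains all of $E$, so a finite congruence generating set can only ever pin down finitely many non-idempotent generators of $T$ alongside the (possibly infinite) idempotents, which is exactly why one adjoins $E$ to the finite set. Everything else is bookkeeping once \cref{gen set} has normalised the generators into $\mathcal{R}$-related pairs and \cref{M2} has been invoked to compare idempotent separating left congruences through their inverse kernels; in particular no direct computation inside $S$ is required beyond observing that the chosen generators are $\mathcal{R}$-related, so that $\langle Y\rangle$ (respectively $\langle Q\rangle$) is idempotent separating.
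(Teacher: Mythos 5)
Your proposal is correct and follows essentially the same route as the paper: in one direction you generate $\chi$ by the pairs $(x,xx^{-1})$ for $x$ in a finite generating set of $T$ and compare idempotent separating left congruences via their inverse kernels using \cref{M2}; in the other you normalise a finite generating set via \cref{gen set} into pairs $(p,pp^{-1})$ and show $T=|P\cup E|$ by the same comparison. The only differences are that you spell out two small details the paper leaves implicit (why $\langle Y\rangle$ is idempotent separating, and why the idempotent pairs produced by \cref{gen set} can be discarded), which is harmless.
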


We shall want to consider idempotent separating left congruences corresponding to almost finitely generated full inverse subsemigroups; for ease of notation if $Y$ is an inverse subsemigroup then we write $\chi_Y$ for the idempotent separating left congruence with inverse kernel $|Y\cup E(S)|.$  We note that if $Y$ is an inverse subsemigroup of $S$ then $|Y\cup E(S)|=YE(S)\cup E(S).$

We note that $\mathfrak{C}_{FG}(E),$ the lattice of finitely generated congruences on $E(S)$ is a sublattice of $\mathfrak{C}(E).$  Also $\mathfrak{V}_{AFG}(S),$ the lattice of almost finitely generated full inverse subsemigroup of $S$ is a join-subsemilattice of $\mathfrak{V}(S).$ 

\begin{corr}
Let $\rho$ be a finitely generated left congruence on $S.$  Then there are $T\in \mathfrak{V}_{AFG}(S)$ and $\tau\in \mathfrak{C}_{FG}(E)$ such that
$$\rho=\chi_T\vee\nu_\tau.$$
\end{corr}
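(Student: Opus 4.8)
The plan is to assemble this statement directly from the two results immediately preceding it, since the genuine content has already been established there. First I would apply the corollary following \cref{gen set}, which decomposes any finitely generated left congruence $\rho$ as $\rho = \chi \vee \nu_\tau$, where $\tau$ is a finitely generated congruence on $E(S)$ and $\chi$ is a finitely generated idempotent separating left congruence on $S$. This already supplies the $\nu_\tau$ factor with $\tau \in \mathfrak{C}_{FG}(E)$, so only the $\chi$ factor remains to be identified.

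The remaining task is to recognise $\chi$ as $\chi_T$ for a suitable $T \in \mathfrak{V}_{AFG}(S)$. Here I would invoke the preceding lemma: because $\chi$ is a finitely generated idempotent separating left congruence, its inverse kernel $T := \ink/(\chi)$ is almost finitely generated, that is, $T \in \mathfrak{V}_{AFG}(S)$. Since idempotent separating left congruences are uniquely determined by their inverse kernel (\cref{M2}), and $T$ is full so that $|T \cup E(S)| = T$, the congruence $\chi$ coincides with $\chi_T$ in the notation introduced above. Substituting gives $\rho = \chi_T \vee \nu_\tau$ with $\tau \in \mathfrak{C}_{FG}(E)$ and $T \in \mathfrak{V}_{AFG}(S)$, as required.

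I do not anticipate any real obstacle beyond correctly lining up the earlier statements. The only point demanding a moment's care is the definitional one: for a full inverse subsemigroup $T$ one has $|T \cup E(S)| = T$, so the two readings of the symbol $\chi_T$ — the unique idempotent separating congruence with inverse kernel $T$, and the idempotent separating congruence with inverse kernel $|T \cup E(S)|$ — genuinely agree, and the conclusion is unambiguous.
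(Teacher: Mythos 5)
Your proposal is correct and matches the route the paper intends: the corollary is a direct assembly of the decomposition $\rho=\chi\vee\nu_\tau$ from the corollary following \cref{gen set} together with the lemma identifying finitely generated idempotent separating left congruences with almost finitely generated inverse kernels. Your remark that $|T\cup E(S)|=T$ for a full inverse subsemigroup $T$, so that the two readings of $\chi_T$ agree, is a worthwhile clarification but does not change the argument.
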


We recall from \cref{embed thm} the function
$$ \Theta: \mathfrak{C}(E)\times\mathfrak{V}(S)\rightarrow \mathfrak{LC}(S);\ (\tau,T)\mapsto \nu_\tau\vee\chi_T. $$
The previous corollary gives that if $\rho$ is finitely generated then it is the image under $\Theta$ of a pair $(\tau,T)\in \mathfrak{C}_{FG}(E)\times \mathfrak{V}_{AFG}(S).$  In fact $\mathfrak{LC}_{FG}(S),$ the set of finitely generated left congruences on $S$ is precisely the image $(\mathfrak{C}_{FG}(E)\times \mathfrak{V}_{AFG}(S))\Theta.$ 

It is of interest to consider when every left congruence is finitely generated.  For inverse semigroups a it is possible to describe exactly these semigroups \cite{kozhukhov1980semigroups}.  A partial order $P$ is said to have the ascending chain condition if every increasing sequence is eventually constant.

\begin{deff}
A semigroup $S$ is {\em left Noetherian} if every left congruence on $S$ is finitely generated.  Equivalently the lattice of left congruences has the ascending chain condition.
\end{deff}

The preceding remarks show that $S$ is left Noetherian if and only if every left congruence is join of a finitely generated trace minimal left congruence and a finitely generated idempotent separating left congruence.  The following is a straightforward observation about the ascending chain condition on partial orders.

\begin{lemma}\label{acclem}
Let $P,Q$ be partial orders that have the ascending chain condition, and let $R\subseteq P$ be a suborder.  Then $R, P\times Q$ have the ascending chain condition.
\end{lemma}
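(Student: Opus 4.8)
The plan is to handle the two assertions separately, each by taking an arbitrary ascending chain and showing that it stabilises. Both arguments are elementary and rely only on the stated definition of the ascending chain condition, namely that every increasing sequence is eventually constant.

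For the suborder $R\subseteq P$, I would take an arbitrary ascending chain $r_1\leq r_2\leq\cdots$ in $R$. Since $R$ carries the order induced from $P$, this is simultaneously an ascending chain in $P$. The ascending chain condition on $P$ then forces it to be eventually constant, and the very same sequence, viewed back in $R$, is therefore eventually constant. Hence $R$ inherits the ascending chain condition; this direction is immediate once one notes that chains in $R$ are literally chains in $P$.

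For the product $P\times Q$ with the coordinatewise order, I would take an ascending chain $(p_1,q_1)\leq(p_2,q_2)\leq\cdots$. The key observation is that each coordinate projection yields an ascending chain, $p_1\leq p_2\leq\cdots$ in $P$ and $q_1\leq q_2\leq\cdots$ in $Q$. Applying the ascending chain condition in each factor produces indices $N_1,N_2$ beyond which the respective coordinates are constant; setting $N=\max\{N_1,N_2\}$ gives $(p_n,q_n)=(p_N,q_N)$ for all $n\geq N$, so the chain in $P\times Q$ stabilises.

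There is no genuine obstacle here: the only points needing (minor) care are the conventions, namely that the order on $R$ is the restriction of the order on $P$ so that chains transfer verbatim, and that the order on $P\times Q$ is coordinatewise so that stabilisation in each factor forces joint stabilisation. I would not expect any subtlety beyond fixing these conventions, and the two halves together establish the lemma.
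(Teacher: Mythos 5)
Your proof is correct: the paper states this lemma without proof, dismissing it as a straightforward observation, and your argument (chains in $R$ are chains in $P$; project a chain in $P\times Q$ to each factor and take the maximum of the two stabilisation indices) is precisely the standard justification being implicitly invoked. No issues.
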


\begin{theorem}
Let $S$ be an inverse semigroup.  The lattice $\mathfrak{LC}(S)$ has the ascending chain condition if and only if $\mathfrak{V}(S)$ and $\mathfrak{C}(E)$ have the ascending chain condition.
\begin{proof}
Suppose that $\mathfrak{V}(S),\mathfrak{C}(E)$ have the ascending chain condition.  By \cref{acclem} we have that $\mathfrak{V}(S)\times\mathfrak{C}(E)$ has the ascending chain condition.  However we know that as partial orders we have that $\mathfrak{LC}(S)\subseteq \mathfrak{V}(S)\times\mathfrak{C}(E).$  Thus from \cref{acclem} we obtain that $\mathfrak{LC}(S)$ has the ascending chain condition.

Suppose conversely that $\mathfrak{LC}(S)$ has the ascending chain condition.  We note that we have lattice embeddings $\mathfrak{V}(S)\hookrightarrow\mathfrak{LC}(S)$ and $\mathfrak{C}(E)\hookrightarrow\mathfrak{LC}(S).$  Then certainly we have $\mathfrak{V}(S),\mathfrak{C}(E)\subseteq \mathfrak{LC}(S)$ as partial orders.  Thus by \cref{acclem} we have that $\mathfrak{V}(S),\mathfrak{C}(E)$ have the ascending chain condition.
\end{proof}
\end{theorem}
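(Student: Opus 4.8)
The plan is to exploit two structural facts already established: first, that $\mathfrak{LC}(S)$ is realised as a suborder of $\mathfrak{C}(E)\times\mathfrak{V}(S)$ via the trace and inverse kernel, and second, that $\mathfrak{C}(E)$ and $\mathfrak{V}(S)$ each embed as suborders of $\mathfrak{LC}(S)$. Both implications of the equivalence then drop out of \cref{acclem} with essentially no further computation; the substance of the argument is simply matching the right prior result to each direction.

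For the direction asserting that the ascending chain condition on $\mathfrak{V}(S)$ and $\mathfrak{C}(E)$ forces it on $\mathfrak{LC}(S)$, I would first apply the product clause of \cref{acclem} to deduce that $\mathfrak{C}(E)\times\mathfrak{V}(S)$ satisfies the ascending chain condition. By the realisation $\mathfrak{LC}(S)\cong\mathfrak{IP}(S)$ discussed after \cref{biggy}, the assignment $\rho\mapsto(\tra/(\rho),\ink/(\rho))$ identifies $\mathfrak{LC}(S)$ with a subset of the product, and \cref{order lem} confirms that the ordering of left congruences coincides with the product ordering restricted to this subset. Thus $\mathfrak{LC}(S)$ is, up to isomorphism, a suborder of $\mathfrak{C}(E)\times\mathfrak{V}(S)$, and the suborder clause of \cref{acclem} yields the ascending chain condition on $\mathfrak{LC}(S)$.

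For the converse I would invoke the lattice embeddings $\tau\mapsto\nu_\tau$ and $T\mapsto\chi_T$ recorded in \cref{embed thm}, which present $\mathfrak{C}(E)$ and $\mathfrak{V}(S)$ as suborders of $\mathfrak{LC}(S)$. An order embedding carries a strictly increasing sequence to a strictly increasing sequence, so any failure of the ascending chain condition in $\mathfrak{C}(E)$ or $\mathfrak{V}(S)$ would transport to a failure in $\mathfrak{LC}(S)$; equivalently, the suborder clause of \cref{acclem} applies directly and both lattices inherit the ascending chain condition from $\mathfrak{LC}(S)$.

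The one point deserving care — and the mildest possible obstacle — is to be sure the maps in play are order \emph{embeddings} rather than merely order-preserving, so that chains transfer faithfully in the required direction. In the forward direction this is exactly the content of \cref{order lem}; in the converse it is the force of the word ``embedding'' in \cref{embed thm}, which I would reinforce by noting that $\tra/(\nu_\tau)=\tau$ and $\ink/(\chi_T)=T$, so each map recovers its input and hence reflects as well as preserves the order. With these identifications in place the proof is immediate.
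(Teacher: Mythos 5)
Your proposal is correct and follows essentially the same route as the paper: the forward direction realises $\mathfrak{LC}(S)$ as a suborder of $\mathfrak{C}(E)\times\mathfrak{V}(S)$ via the inverse kernel trace description and applies \cref{acclem}, and the converse uses the embeddings $\tau\mapsto\nu_\tau$ and $T\mapsto\chi_T$ together with \cref{acclem} again. Your added remark that these maps reflect order because $\tra/(\nu_\tau)=\tau$ and $\ink/(\chi_T)=T$ is a sensible point of care that the paper leaves implicit.
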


The following are standard results concerning the ascending chain condition on the two lattices: full inverse subsemigroups and congruences on $E.$

\begin{result}
Let $E$ be a semilattice.  Then $\mathfrak{C}(E)$ has the ascending chain condition if and only if $E$ is finite.
\end{result}

\begin{result}
The lattice $\mathfrak{V}(S)$ has the ascending chain condition if and only if every full inverse subsemigroup is finitely generated
\end{result}

Noting that a semilattice is finite if and only if it is finitely generated the usual formulation for the classification of left Noetherian inverse semigroups is immediate.

\begin{theorem}[Theorem 4.3, \cite{kozhukhov1980semigroups}]
Let $S$ be an inverse semigroup.  Then $S$ is left Noetherian if and only if every full inverse subsemigroup is finitely generated.
\end{theorem}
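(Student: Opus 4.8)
The plan is to read the result off from the three statements that immediately precede it, the only real work being to observe that the hypothesis on full inverse subsemigroups already forces $E$ to be finite, rendering the separate condition on $\mathfrak{C}(E)$ redundant.

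First I would recall that $S$ is left Noetherian exactly when $\mathfrak{LC}(S)$ satisfies the ascending chain condition, and that by the penultimate theorem this is equivalent to both $\mathfrak{V}(S)$ and $\mathfrak{C}(E)$ satisfying the ascending chain condition. The two standard results then translate these conditions: $\mathfrak{C}(E)$ has the ascending chain condition if and only if $E$ is finite, and $\mathfrak{V}(S)$ has it if and only if every full inverse subsemigroup is finitely generated. So at this point
$$S \text{ is left Noetherian} \iff E \text{ is finite and every full inverse subsemigroup is finitely generated.}$$

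The key step is to eliminate the finiteness of $E$. For this I would note that $E = E(S)$ is itself a full inverse subsemigroup of $S$, so if every full inverse subsemigroup is finitely generated then in particular $E$ is finitely generated as a subsemigroup. As the subsemilattice generated by a finite set $X$ consists only of products over the (finitely many) nonempty subsets of $X$, a finitely generated semilattice is finite; hence $E$ is finite. This shows that ``every full inverse subsemigroup is finitely generated'' already implies ``$E$ is finite'', so the right-hand side of the displayed equivalence collapses to the single condition that every full inverse subsemigroup is finitely generated.

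The forward direction is then immediate: if every full inverse subsemigroup is finitely generated, then $\mathfrak{V}(S)$ has the ascending chain condition and $E$ is finite, so $\mathfrak{C}(E)$ does too, whence the penultimate theorem gives that $\mathfrak{LC}(S)$ has the ascending chain condition. Conversely, if $S$ is left Noetherian then $\mathfrak{LC}(S)$ has the ascending chain condition, and since $\mathfrak{V}(S)$ embeds in $\mathfrak{LC}(S)$ as a suborder it too has the ascending chain condition by \cref{acclem}, whence every full inverse subsemigroup is finitely generated. There is no genuine obstacle here; the entire content beyond the quoted results is the elementary observation that a finitely generated semilattice is finite, which is precisely what allows the finiteness of $E$ to be absorbed into the subsemigroup-generation hypothesis.
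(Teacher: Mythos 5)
Your proposal is correct and follows essentially the same route as the paper: it combines the preceding theorem (ACC on $\mathfrak{LC}(S)$ is equivalent to ACC on both $\mathfrak{V}(S)$ and $\mathfrak{C}(E)$) with the two standard results, and then absorbs the finiteness of $E$ into the subsemigroup hypothesis via the observation that a finitely generated semilattice is finite. This last observation is exactly the remark the paper makes in passing before stating the theorem, so there is nothing to add.
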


\section{Generating sets for left congruences}

We now turn our attention to a more detailed discussion of what the inverse kernel approach to left congruences tells us about the structure of a left congruence given a generating set.  

\begin{lemma}\label{tracejoin}
Let $\tau$ be a congruence on $E,$ $Y\subseteq S$ an inverse subsemigroup, and $\rho=\chi_Y\vee\nu_\tau.$  Then 
$$\tra/(\rho)=\langle \tau\cup\{(aea^{-1},afa^{-1}) \ |\ (e,f)\in\tau,\ a\in Y\}\rangle.$$
\end{lemma}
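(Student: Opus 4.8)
The plan is to show that $\tra/(\rho)$ equals the congruence $\sigma = \langle \tau\cup\{(aea^{-1},afa^{-1}) \mid (e,f)\in\tau,\ a\in Y\}\rangle$ on $E$ by proving inclusion in both directions. Since $\rho = \chi_Y\vee\nu_\tau$, the monotonicity of the trace map gives $\tra/(\rho)\supseteq\tra/(\nu_\tau)=\tau$ and $\tra/(\rho)\supseteq\tra/(\chi_Y)$, but the latter is trivial, so the content lies in controlling the new idempotent relations that arise when the two generating data interact. First I would observe that $\sigma\subseteq\tra/(\rho)$ is the easy direction: we have $\tau\subseteq\tra/(\rho)$ directly, and for each $(e,f)\in\tau$ and $a\in Y$ we have $a\ \rho\ aa^{-1}$ (since $a\in\ink/(\rho)$), whence left-multiplying $e\ \rho\ f$ and conjugating appropriately yields $aea^{-1}\ \rho\ afa^{-1}$; as these are idempotents this relation lands in $\tra/(\rho)$. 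So every generator of $\sigma$ lies in $\tra/(\rho)$, giving $\sigma\subseteq\tra/(\rho)$.

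**The main direction.**

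The harder inclusion is $\tra/(\rho)\subseteq\sigma$. My approach is to build an auxiliary inverse congruence pair $(\sigma,V)$ and show that the left congruence it determines contains both $\chi_Y$ and $\nu_\tau$, forcing $\rho\subseteq\rho_{(\sigma,V)}$ and hence $\tra/(\rho)\subseteq\sigma$ by \cref{order lem}. Concretely, I would set $V$ to be the full inverse subsemigroup obtained by saturating $|Y\cup E|$ under $\psi=\nu_\sigma\cap(N(\sigma)\times N(\sigma))$, as in \cref{joinlem}, after first checking $Y\subseteq N(\sigma)$. The crucial point is that $\sigma$ was defined precisely so that $Y$ normalises it: the generators $(aea^{-1},afa^{-1})$ for $a\in Y$ guarantee that conjugation by elements of $Y$ sends $\tau$-related (hence $\sigma$-related) idempotents to $\sigma$-related idempotents, so $Y\subseteq N_R(\sigma)$, and since $Y$ is inverse we also get $Y\subseteq N_L(\sigma)$, giving $Y\subseteq N(\sigma)$. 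With $(\sigma,V)$ an inverse congruence pair and $Y\subseteq V$, \cref{joinlem} applies. Then $\nu_\sigma\subseteq\rho_{(\sigma,V)}$ since $\tra/(\rho_{(\sigma,V)})=\sigma\supseteq\tau$ forces $\nu_\tau\subseteq\rho_{(\sigma,V)}$ (as $\nu_\tau$ is minimum with trace $\tau$), and $\chi_Y\subseteq\rho_{(\sigma,V)}$ since $Y\subseteq V=\ink/(\rho_{(\sigma,V)})$ together with $\chi_Y$ being idempotent-separating. Hence $\rho=\chi_Y\vee\nu_\tau\subseteq\rho_{(\sigma,V)}$, so $\tra/(\rho)\subseteq\tra/(\rho_{(\sigma,V)})=\sigma$.

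**The main obstacle.**

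I expect the principal difficulty to be verifying cleanly that $Y\subseteq N(\sigma)$, i.e.\ that the added generators genuinely make every element of $Y$ a normaliser for the whole generated congruence $\sigma$ and not merely for the base relation $\tau$. The subtlety is that $\sigma$ is a congruence closure, so an arbitrary $\sigma$-relation $(g,h)$ between idempotents arises through a chain of applications of generators, and I must confirm that conjugation by a fixed $a\in Y$ carries each such chain to another valid $\sigma$-chain. This amounts to checking that the generating set for $\sigma$ is itself closed (up to $\sigma$) under conjugation by $Y$: conjugating a generator of the form $(aea^{-1},afa^{-1})$ by $b\in Y$ produces $(ba e a^{-1}b^{-1}, ba f a^{-1}b^{-1})$, and since $ba\in Y$ (as $Y$ is a subsemigroup) and $(e,f)\in\tau$, this is again a generator of $\sigma$; conjugating a generator $(e,f)\in\tau$ by $b\in Y$ gives $(beb^{-1},bfb^{-1})$, which is a generator by construction. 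Assembling these observations into a proof that conjugation by $Y$ preserves $\sigma$ — and therefore that $Y\subseteq N(\sigma)$ — is the technical heart of the argument; everything else is routine application of \cref{joinlem}, \cref{order lem}, and the minimality of $\nu_\tau$ and $\chi_Y$ in their respective classes.
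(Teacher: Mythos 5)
Your proposal is correct and follows essentially the same route as the paper: the heart of both arguments is showing that the generated congruence $\sigma=\langle X\rangle$ satisfies $Y\subseteq N(\sigma)$ by conjugating an $X$-sequence term by term, using that the generating set is closed under conjugation by $Y$. The only cosmetic difference is that you reconstruct the characterisation of $\tra/(\chi_Y\vee\nu_\tau)$ as the least congruence containing $\tau$ and normalised by $Y$ via \cref{joinlem} and \cref{order lem}, whereas the paper simply invokes \cref{latprop2} for that fact.
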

\begin{proof}
\cref{latprop2} gives that $\tra/(\rho)=\xi$ where $\xi$ is the least congruence on $E$ such that $\tau\subseteq \xi,$ and $\ink/(\chi_Y)\vee\ink/(\nu_tau)\subseteq N(\xi).$  We note that since the map $\tau\mapsto \nu_\tau$ is order preserving we have that if $\tau\subseteq \xi$ then $\ink/(\nu_\tau)\subseteq\ink/(\nu_\xi)\subseteq N(\xi),$ and also if $Y\subseteq V$ for a full inverse subsemigroup $V$ then $\ink/(\chi_Y)\subseteq V.$  Hence we may realise $\xi$ as the least congruence on $E$ such that $\tau\subseteq\xi$ and $Y\subseteq N(\xi).$

Write $X=\tau\cup\{(aea^{-1},afa^{-1}) \ |\ (e,f)\in\tau,\ a\in Y\}.$  Then it is clear that $X\subseteq \xi.$  We also note that $e\ \langle X\rangle\ f$ with sequences $(p_i,q_i)\in X,\ h_i\in E$ such that
$$e=h_1p_1,\ h_1q_1=h_2p_2,\dots\,\ h_nq_n=f$$
then we can observe that
\begin{gather*}
aea^{-1}=ah_1p_1a^{-1}=(ah_1a^{-1})(ap_1a^{-1}),\\
(ah_1a^{-1})(aq_1a^{-1})=ah_1q_1a^{-1}=ah_2p_2a^{-1}=(ah_2a^{-1})(ap_2a^{-1}),\ \dots \\
(ah_na^{-1})(aq_na^{-1})=ah_nq_na^{-1}=afa^{-1}.
\end{gather*}
We note that if $(p_i,q_i)\in X$ then for $a\in Y$ we have $(ap_ia^{-1},aq_ia^{-1})\in X,$ thus $aea^{-1}\ \langle X\rangle\ afa^{-1}.$  Hence we have $\tau\subseteq \langle X\rangle,$ and $Y\subseteq N(\langle X\rangle),$ thus $\xi=\langle X\rangle.$
\end{proof}

Given a left congruence $\rho$ we will now be interested in which $a\in S$ have $[a]_\rho=\{ a\}.$ 

\begin{lemma}
Let $\rho$ be a left congruence on $S,$ and $a\in S.$  If $[a^{-1}a]_\rho=\{a^{-1}a\},$ then $[a]_\rho=\{a\}.$
\begin{proof}
Suppose that $[a]_\rho\neq \{a\},$ so there is some $b\neq a$ such that $a\ \rho\ b.$  Then certainly $a^{-1}a\ \rho\ a^{-1}b,$ if $a^{-1}b\neq a^{-1}a,$ then there is nothing to show, so we assume that $a^{-1}b=a^{-1}a.$  This is equivalent to $a^{-1}a=b^{-1}a,$ and to $a\leq b.$  We note that by \cref{biggy} we have from $a\ \rho\ b$ that $b^{-1}b\ \rho\ b^{-1}aa^{-1}b.$  However we then have $b^{-1}b\ \rho\ b^{-1}aa^{-1}b=a^{-1}a.$  If $b^{-1}b=a^{-1}a$ then $a\ \mathcal{L}\ b,$ and thus we get $a=b,$ a contradiction.  Thus we have that $[a^{-1}a]\neq \{a^{-1}a\}.$
\end{proof}
\end{lemma}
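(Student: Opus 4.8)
The plan is to prove the contrapositive: assuming $[a]_\rho\neq\{a\}$, I will exhibit an element distinct from $a^{-1}a$ lying in $[a^{-1}a]_\rho$. So fix $b\neq a$ with $a\ \rho\ b$. Left-multiplying by $a^{-1}$ and using that $\rho$ is a left congruence gives $a^{-1}a\ \rho\ a^{-1}b$. If $a^{-1}b\neq a^{-1}a$, then $a^{-1}b$ already witnesses $[a^{-1}a]_\rho\neq\{a^{-1}a\}$ and I am done; note it is irrelevant whether $a^{-1}b$ is idempotent, since all I need is that the class of $a^{-1}a$ be nontrivial.

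The remaining case is $a^{-1}b=a^{-1}a$. First I would record that this condition says exactly $a\leq b$ in the natural partial order (left-multiplying by $a$ gives $a=aa^{-1}b$, and conversely $a\leq b$ forces $a^{-1}a=a^{-1}b$), and, taking inverses, equivalently $b^{-1}a=a^{-1}a$. Here the naive left-translation yields nothing new, so instead I would invoke \cref{biggy}. Writing $\tau=\tra/(\rho)$ and $T=\ink/(\rho)$, we have $\rho=\rho_{(\tau,T)}$, so the relation $a\ \rho\ b$ supplies $b^{-1}aa^{-1}b\ \tau\ b^{-1}b$; since these are idempotents and $\tau$ is the restriction of $\rho$ to $E$, this is the same as $b^{-1}aa^{-1}b\ \rho\ b^{-1}b$. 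Substituting $a^{-1}b=a^{-1}a$ and $b^{-1}a=a^{-1}a$ collapses the left-hand side, $b^{-1}aa^{-1}b=(a^{-1}a)(a^{-1}a)=a^{-1}a$, whence $a^{-1}a\ \rho\ b^{-1}b$, a relation between two idempotents.

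To conclude I would split on whether $b^{-1}b=a^{-1}a$. If $b^{-1}b\neq a^{-1}a$, then $b^{-1}b$ witnesses $[a^{-1}a]_\rho\neq\{a^{-1}a\}$, as required. If instead $b^{-1}b=a^{-1}a$, then $a\ \mathcal{L}\ b$; combined with $a\leq b$ and the standard fact that the natural partial order is trivial on each $\mathcal{L}$-class (from $a=ba^{-1}a$ and $a^{-1}a=b^{-1}b$ one obtains $a=bb^{-1}b=b$), this forces $a=b$, contradicting the choice of $b$. Hence this subcase cannot occur, and in every case $[a^{-1}a]_\rho\neq\{a^{-1}a\}$, establishing the contrapositive.

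I expect the main obstacle to be precisely the case $a\leq b$: there left-compatibility alone gives no new element, and the key realization is that one must route through the trace, via the inverse-kernel--trace description of \cref{biggy}, to extract the nontrivial idempotent relation $a^{-1}a\ \rho\ b^{-1}b$; the final $\mathcal{L}$-class argument then rules out the degenerate possibility and closes the proof.
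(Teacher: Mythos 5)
Your proof is correct and follows essentially the same route as the paper's: the same reduction to the case $a\leq b$, the same appeal to \cref{biggy} to extract $b^{-1}aa^{-1}b\ \rho\ b^{-1}b$ and hence $a^{-1}a\ \rho\ b^{-1}b$, and the same final $\mathcal{L}$-class argument ruling out $b^{-1}b=a^{-1}a$. Your write-up is if anything slightly more explicit about why $a\leq b$ together with $a\ \mathcal{L}\ b$ forces $a=b$.
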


\begin{lemma}\label{trivjoin}
Let $\rho_1,\rho_2$ be left congruences on $S.$  Let $\rho=\rho_1\vee\rho_2$ and $a\in S.$  Then $[a]_\rho=\{a\}$ if and only if $[a]_{\rho_1}=\{a\}$ and $[a]_{\rho_2}=\{a\}.$
\begin{proof}
This is immediate noting that the join is equal to the transitive closure of the two left congruences.
\end{proof}
\end{lemma}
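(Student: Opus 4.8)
The plan is to handle the two directions separately, with the forward implication being an immediate consequence of monotonicity and the reverse implication relying on the concrete $R$-sequence description of generated left congruences recalled in the preliminaries.

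For the forward direction, suppose $[a]_\rho=\{a\}$. Since $\rho_1,\rho_2\subseteq\rho=\rho_1\vee\rho_2$, we have $[a]_{\rho_i}\subseteq[a]_\rho=\{a\}$ for $i=1,2$; combined with reflexivity, $a\in[a]_{\rho_i}$, this forces $[a]_{\rho_1}=[a]_{\rho_2}=\{a\}$. So only the reverse direction carries content.

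For the reverse direction, suppose $[a]_{\rho_1}=\{a\}$ and $[a]_{\rho_2}=\{a\}$, and assume for contradiction that there is some $b\neq a$ with $a\ \rho\ b$. Writing $R=\rho_1\cup\rho_2$, we have $\rho=\langle R\rangle$, so by the description of $\langle R\rangle$ from the preliminaries there are sequences $\{(x_i,y_i)\}\subseteq R$ and $\{c_i\}\subseteq S$ with $a=c_1x_1$, $c_iy_i=c_{i+1}x_{i+1}$ for $1\le i\le n-1$, and $c_ny_n=b$. The key observation is that $R$ is left compatible, since each $\rho_i$ is; hence $(c_ix_i,c_iy_i)\in R$ for every $i$, and we obtain the chain
$$a=c_1x_1\ R\ c_1y_1=c_2x_2\ R\ \cdots\ R\ c_ny_n=b,$$
whose consecutive terms are genuinely $R$-related. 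I would then peel off this chain one step at a time by induction on $i$: the base case is $c_1x_1=a$, and whenever $c_ix_i=a$ the step $c_ix_i\ R\ c_iy_i$ gives $c_iy_i\in[a]_{\rho_1}\cup[a]_{\rho_2}=\{a\}$, so $c_iy_i=a$, and then $c_iy_i=c_{i+1}x_{i+1}$ propagates $c_{i+1}x_{i+1}=a$. Reaching $i=n$ yields $b=c_ny_n=a$, contradicting $b\neq a$; hence $[a]_\rho=\{a\}$.

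The argument is essentially routine, and the author's own one-line justification via transitive closure is equivalent to it. The only point requiring care is the passage from the $R$-sequence (which carries the left-multipliers $c_i$) to an honest chain of $R$-related elements, and this is precisely where left compatibility of $\rho_1\cup\rho_2$ enters. I do not expect any substantial obstacle beyond this bookkeeping.
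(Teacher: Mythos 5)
Your proposal is correct and is essentially the paper's argument in expanded form: the paper simply observes that $\rho_1\vee\rho_2$ is the transitive closure of $\rho_1\cup\rho_2$, and your $R$-sequence induction (using left compatibility of $\rho_1\cup\rho_2$ to turn the sequence into a genuine chain of $\rho_1\cup\rho_2$-related elements, each of which must then equal $a$) is precisely a proof of that fact specialised to the class of $a$. No gaps.
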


Thus for a left congruence $\rho_{(\tau,T)}$ to determine which $a\in S$ have $[a]_\rho\neq \{a\}$ it suffices to calculate which $a$ have $[a^{-1}a]_{\nu_\tau}\neq \{a^{-1}a\},$ or $[a^{-1}a]_{\chi_T}\neq \{a^{-1}a\}.$  We then note that for an given a congruence $\tau$ on $E(S),$ the equivalence class $[e]_{\nu_\tau} \neq \{e\}$ if and only if $[e]_{\tau} \neq \{e\}.$  Also given an idempotent separating left congruence $\chi_T$ on $S,$ we note that $[e]_{\chi_T} \neq \{e\}$ if and only if $[e]_\mathcal{R}\cap T \neq \{e\}.$

It is of interest to consider semigroups for which the universal relation $\omega,$ is finitely generated as a left congruence.  For monoids this is equivalent to the monoid being of type left $\text{FP}_1$.  This has been studied for several classes of semigroups, and a classification for inverse semigroups is available \cite{tomandvicky}.  We now use our methodology to provide a direct proof of this result.

If $\omega$ is finitely generated then there is a finitely generated congruence on $E(S)$ and a finitely generated full inverse subsemigroup $Y$ such that $\omega=\nu_\tau\vee\chi_Y.$  We note that in this case we may assume that there is a finite set $X\subseteq E(S)$ such that $\tau=\langle X\times X\rangle.$  We can then apply \cref{tracejoin} to obtain that $\omega_E,$ the universal congruence on $E,$ is generated by $Z\times Z,$ where $Z= \tau\cup \{ (aea^{-1},afa^{-1})\ |\ a\in Y,\ e,f\in X\}.$ 

\begin{deff}
An idempotent $f$ is {\em maximal} in $S$ if for $e\in E(S)$ we have $f\leq e$ implies $e=f.$ 
\end{deff}

\begin{lemma}\label{gensetlem}
Let $Z\subseteq E(S)$ and let $X\subseteq Z\times Z,$ let $\tau=\langle X\rangle$ be a congruence on $E(S).$ Let $Y$ be an inverse subsemigroup of $S.$  Let $\rho=\nu_\tau\vee\chi_Y.$  Then 
\begin{enumerate}[label={(\roman*)}]
\item if $[e]_\rho\neq\{ e\}$ then $e\leq f$ for some $f\in E(Y) \cup Z.$
\end{enumerate}
Suppose now that $Z$ is finite and $Y$ is finitely generated, then
\begin{enumerate}[resume,label={(\roman*)}]
\item $|Y\cup Z|$ contains finitely many maximal idempotents,
\item if $[e]_\rho\neq\{e\}$ then $e\leq $ for some idempotent $m$ maximal in $E(|Y\cup X|),$ 
\item $[m]_\rho\neq\{ m\}$ for finitely many maximal idempotents $m.$ 
\end{enumerate}
\begin{proof}
We observe that by \cref{trivjoin} $[e]_\rho\neq \{e\}$ if and only if at least one of $[e]_{\chi_Y}\neq \{e\}$ or $[e]_{\nu_\tau}\neq \{e\}.$  We also note that 
\begin{align*}
[e]_{\chi_Y}\neq \{e\} & \implies \ink/(\chi_Y)\cap [e]_\mathcal{R}\neq \{e\}\\
& \implies (YE(S)\cup E(S))\cap [e]_\mathcal{R}\neq \{e\}\\
& \implies e\in YE(S)\\
& \implies e\leq f\in Y
\end{align*}
Also
\begin{align*}
[e]_{\nu_\tau}\neq \{e\} & \implies [e]_\tau \neq \{e\}\\
& \implies e\leq f\in Z
\end{align*}
This completes the proof of the first part.

We now assume that $Z$ is finite and $Y$ is finitely generated, say $Y=|W|$ with $W\subseteq S$ finite and inverse.  

Let $U^\prime=X\cup \{yy^{-1}\ |\ y\in W\}.$  Let $U$ be the set of elements in $U^\prime$ that are maximal in $U^\prime.$  So
$$U=\{m\in U^\prime\ |\ h\in U^\prime \text{ and } m\leq h \implies m=h\}.$$
We claim that $U$ is the set of maximal elements in $|W\cup Z|.$   We note that for $u\in U^\prime$ there is a $m\in U$ such that $u\leq m,$ and for $m_1\neq m_2\in U$ we have $m_1\not\leq m_2$ and $m_2\not\leq m_1.$  We then note that if $e\in E(|W\cup Z|),$ then $e=q_1q_2\dots q_k$ for some $q_i\in W\cup Z.$  We then observe that $e\leq q_1q_1^{-1}.$  Also $q_1q_1^{-1}\leq m,$ for some $m\in U.$  Suppose there is $m^\prime\in U$ with $m^\prime\leq e.$  Then $m^\prime\leq e\leq m,$ hence $m^\prime=e=m,$ and thus the elements in $U$ are maximal.  Since $U$ is finite we have also shown that $|Y\cup Z|$ contains finitely many maximal idempotents.  Also by $(i)$ we have that if $[e]_\rho\neq\{e\}$ then $e\leq f$ for some $f\in E(Y)\cup Z.$  We have that every $f\in E(Y)\cup Z$ has $f\leq m$ for some $m\in U,$ hence we have the third claim. 

The final part then follows immediately.
\end{proof}
\end{lemma}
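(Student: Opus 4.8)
The plan is to reduce everything to the two factors of the join $\rho=\nu_\tau\vee\chi_Y$ and then, in each factor, locate an idempotent sitting above $e$. By \cref{trivjoin} we have $[e]_\rho\neq\{e\}$ if and only if $[e]_{\chi_Y}\neq\{e\}$ or $[e]_{\nu_\tau}\neq\{e\}$, so for part $(i)$ it suffices to treat these two disjuncts separately and produce a witness $f\in E(Y)\cup Z$ in each case.

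For the $\chi_Y$ factor I would invoke the observation recorded just before the statement, namely $[e]_{\chi_Y}\neq\{e\}$ if and only if $\ink/(\chi_Y)\cap[e]_\mathcal{R}\neq\{e\}$, together with $\ink/(\chi_Y)=YE(S)\cup E(S)$. An element $a\neq e$ with $a\ \mathcal{R}\ e$ cannot be idempotent, since the only idempotent in the $\mathcal{R}$-class of $e$ is $e$ itself; hence $a\in YE(S)$, say $a=yg$ with $y\in Y$ and $g\in E(S)$. Then $e=aa^{-1}=ygy^{-1}\leq yy^{-1}\in E(Y)$, so $e\leq f$ with $f\in E(Y)$. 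For the $\nu_\tau$ factor I would use that $[e]_{\nu_\tau}\neq\{e\}$ if and only if $[e]_\tau\neq\{e\}$; a nontrivial $\tau$-class yields an $X$-sequence in the semilattice $E(S)$ whose first link reads $e=c_1x_1$ with $(x_1,y_1)\in X$, and since $X\subseteq Z\times Z$ this gives $e\leq x_1\in Z$. Together these establish part $(i)$.

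For part $(ii)$, writing $Y=|W|$ with $W=W^{-1}$ finite, I would note $E(|Y\cup Z|)=E(|W\cup Z|)$ and that any idempotent of this subsemigroup, written as a product $q_1\cdots q_k$ with $q_i\in W\cup Z$, lies below $q_1q_1^{-1}$, which is either an element of $Z$ or of the form $ww^{-1}$ for some $w\in W$. Hence every idempotent of $|W\cup Z|$ lies below some element of the finite set $U'=Z\cup\{ww^{-1}\mid w\in W\}$, and so below a maximal element of $U'$; the set $U$ of maximal elements of $U'$ is finite and coincides with the set of maximal idempotents of $|W\cup Z|$, giving $(ii)$. Part $(iii)$ then combines $(i)$ and $(ii)$: each $f\in E(Y)\cup Z$ lies in $E(|Y\cup Z|)$, hence below some maximal $m\in U$, and transitivity with $e\leq f$ yields $e\leq m$. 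Part $(iv)$ is then immediate, since by $(ii)$ only finitely many maximal idempotents exist, so a fortiori only finitely many can carry a nontrivial $\rho$-class.

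The step I expect to be most delicate is part $(i)$ in the $\chi_Y$ case: one must first argue that an $\mathcal{R}$-class of an inverse semigroup contains a unique idempotent (so that the witnessing element is genuinely non-idempotent and therefore of the form $yg$), and then compute $aa^{-1}=ygy^{-1}$ correctly so as to land beneath $yy^{-1}\in E(Y)$. The $\nu_\tau$ case becomes routine once the first link of the generating sequence is unwound, and parts $(ii)$–$(iv)$ amount to careful bookkeeping around the finite set $U$.
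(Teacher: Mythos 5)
Your proposal follows the paper's own argument essentially step for step: the same reduction via \cref{trivjoin} into the $\chi_Y$ and $\nu_\tau$ factors for part (i), the same finite set $U'$ of candidate maximal idempotents (with $U$ its maximal elements) for part (ii), and the same combination of (i) and (ii) for parts (iii) and (iv). The extra details you supply --- uniqueness of the idempotent in an $\mathcal{R}$-class and the computation $e=aa^{-1}=ygy^{-1}\leq yy^{-1}\in E(Y)$ --- are correct and in fact tidy up a step the paper leaves implicit.
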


\begin{lemma}\label{congcl}
Let $E$ be a semilattice such that every element is below a maximal idempotent, and $F\subset E$ be a subsemilattice containing all maximal idempotents.  Let $K=\{ e\in E\ |\ \exists f\in F \text{ with } f\leq e \}.$  Then $K$ is a congruence class of $\tau=\langle F\times F\rangle.$  
\end{lemma}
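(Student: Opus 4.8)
The plan is to fix $f\in F$ and show the $\tau$-class $[f]_\tau$ equals $K$, proving the two inclusions separately; since all elements of $F$ are $\tau$-related the class $[f]_\tau$ is independent of the choice of $f$, and clearly $F\subseteq K$ (witnessed by $f\le f$). For $K\subseteq[f]_\tau$, take $e\in K$ and choose $g\in F$ with $g\le e$; because every element of $E$ lies below a maximal idempotent and $F$ contains all maximal idempotents, choose a maximal $m\in F$ with $e\le m$. As $g,m\in F$ we have $g\ \tau\ m$, and meeting both sides with $e$ gives $eg\ \tau\ em$, i.e. $g\ \tau\ e$ (since $eg=g$ and $em=e$). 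Thus $e\ \tau\ g\ \tau\ f$, so $e\in[f]_\tau$.

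For the reverse inclusion $[f]_\tau\subseteq K$ I would exhibit a single congruence that separates $K$ from its complement and then use the minimality of $\tau=\langle F\times F\rangle$. Define $\phi\colon E\to\{0,1\}$ into the two-element semilattice (whose product is the minimum) by $\phi(e)=1$ if $e\in K$ and $\phi(e)=0$ otherwise. The key step is that $\phi$ is a semilattice homomorphism, equivalently $ef\in K\iff e\in K$ and $f\in K$: if $g\le ef$ with $g\in F$ then $g\le ef\le e$ and $g\le ef\le f$, so $e,f\in K$; conversely if $g\le e$ and $h\le f$ with $g,h\in F$, then $gh\in F$ (as $F$ is a subsemilattice) and $gh\le ef$, so $ef\in K$. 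Letting $\sigma$ be the congruence defined by $x\ \sigma\ y\iff\phi(x)=\phi(y)$, its classes are exactly $K$ and $E\setminus K$; since $F\subseteq K$ we have $F\times F\subseteq\sigma$, hence $\tau\subseteq\sigma$ and so $[f]_\tau\subseteq[f]_\sigma=K$.

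Together the two inclusions give $[f]_\tau=K$, so $K$ is a congruence class of $\tau$. The main obstacle is this second inclusion. The naive expectation that $E\setminus K$ consists of $\tau$-singletons is false, since identifying all of $F$ forces additional collapses strictly below $K$; the correct strategy is therefore not to describe every $\tau$-class but merely to construct one congruence, namely $\sigma$, that isolates $K$. Its only nonroutine ingredient is the reverse implication in the homomorphism check, where the subsemilattice property of $F$ enters.
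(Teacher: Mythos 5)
Your proof is correct, and the first half (all of $K$ lies in a single $\tau$-class, via a maximal idempotent $m\in F$ above $e$ and a witness $g\in F$ below $e$) is essentially the same computation the paper makes. Where you genuinely diverge is the reverse inclusion. The paper works directly with the explicit description of the generated left congruence: given $g\ \tau\ h$ with $h\in K$, it takes a $\tau$-sequence $g=y_1p_1$, $y_iq_i=y_{i+1}p_{i+1}$, $y_nq_n=h$ with $p_i,q_i\in F$ and propagates membership in $K$ backwards along the sequence, using that $K$ is upward closed and is a subsemilattice containing $F$, to conclude $g\in K$. You instead package those same two closure properties of $K$ into the statement that the indicator map $\phi\colon E\to\{0,1\}$ is a semilattice homomorphism, so that $\sigma=\ker\phi$ is a congruence containing $F\times F$ whose classes are $K$ and $E\setminus K$; since any congruence is in particular a left congruence, minimality of $\tau=\langle F\times F\rangle$ gives $\tau\subseteq\sigma$ and hence $[f]_\tau\subseteq K$. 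Your route is somewhat cleaner in that it avoids an induction over sequence length and isolates the real content (that $K$ is a filter whose complement is an ideal), while the paper's route is more self-contained in that it only uses the sequence characterisation of $\langle\,\cdot\,\rangle$ already set up in the preliminaries. Both are complete proofs.
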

\begin{proof}
We first note that certainly for $k_1,k_2\in K$ we have $k_1\ \tau\ k_2,$ and also that $K$ is also a subsemilattice of $E.$  Suppose that $g\ \tau\ h$ with $h\in K.$  Then there exist $p_i,q_i\in F,$ $y\in E$ such that 
$$g=y_1p_1,\ y_iq_i=y_{i+1}p_{i+1},\ y_nq_n=h.$$
We note that $h\leq y_n,$ hence $y_n\in K.$  the same argument then gives that $y_i\in K$ for each $i$ and thus $g\in K.$   
\end{proof}

\begin{theorem}
Let $S$ be an inverse semigroup, then $\omega$ is finitely generated as a left congruence on $S$ if and only if $E(S)$ contains finitely many maximal elements and every idempotent is below a maximal idempotent, and there is a finitely generated subsemigroup $W\subseteq S$ such that for each $a \in S$ there is some $f\in E(W)$ with $af\in W.$
\end{theorem}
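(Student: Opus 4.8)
The plan is to run everything through the decomposition of finitely generated left congruences and the inverse-kernel/trace dictionary. Note first that $\tra/(\omega)=\omega_E$ and $\ink/(\omega)=S$, since $e\ \omega\ f$ for all idempotents and $a\ \omega\ aa^{-1}$ for all $a$. By the corollary identifying $\mathfrak{LC}_{FG}(S)$ with $(\mathfrak{C}_{FG}(E)\times\mathfrak{V}_{AFG}(S))\Theta$, finite generation of $\omega$ is equivalent to the existence of a finite $X\subseteq E(S)$ and a finite inverse set $W_0$, with $\tau=\langle X\times X\rangle$ and $Y=\lvert W_0\cup E(S)\rvert$, such that $\rho:=\nu_\tau\vee\chi_Y$ satisfies $\tra/(\rho)=\omega_E$ and $\ink/(\rho)=S$. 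I would phrase the whole proof as translating these two equalities into conditions (A)--(C).

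For the forward implication I would feed $\rho=\omega$, the finite idempotent set $Z=X$, and $Y$ into \cref{gensetlem}. Because $\omega$ relates every pair, every idempotent $e$ has $[e]_\omega\neq\{e\}$, so parts (ii) and (iii) of that lemma give finitely many maximal idempotents of $\lvert Y\cup X\rvert$ with every idempotent of $S$ lying below one of them. A short order argument then shows each such subsemigroup-maximal idempotent is maximal in $E(S)$ and that every maximal idempotent of $E(S)$ occurs among them; this yields finitely many maximal idempotents and the covering property, namely (A) and (B). For (C) I would exploit $\ink/(\rho)=S$: via the saturation description of the inverse kernel of a join in \cref{latprop2} (equivalently, \cref{thm1} applied to $\omega$ together with the self-duality $\omega=\omega_{-1}$), for every $a\in S$ there is an idempotent for which the corresponding right multiple lands in a fixed finitely generated inverse subsemigroup; taking $W$ generated by $W_0$ and the finite set $M$ of maximal idempotents and checking the witness can be chosen in $E(W)$ delivers (C).

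For the converse, assume (A)--(C), let $M$ be the finite set of maximal idempotents and $W=\langle W_0\rangle$ the subsemigroup from (C), and set $\rho:=\langle(M\times M)\cup\{(w,ww^{-1})\mid w\in W_0\}\rangle=\nu_{\langle M\times M\rangle}\vee\chi_W$. The inverse kernel is then cheap, \emph{provided the trace is already universal}: for each $a$, condition (C) yields $f\in E(W)$ with $af\in W\subseteq\ink/(\rho)$, so $af\ \rho\ afa^{-1}$, and since $\tra/(\rho)=\omega_E$ gives $\nu_{\omega_E}\subseteq\rho$ I can climb $a\ \rho\ af\ \rho\ afa^{-1}\ \rho\ aa^{-1}$, proving $a\in\ink/(\rho)$ and hence $\ink/(\rho)=S$. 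So the work is to prove $\tra/(\rho)=\omega_E$. I would apply \cref{tracejoin} to rewrite $\tra/(\rho)$ as the congruence generated by $M\times M$ together with all conjugates $(ama^{-1},am'a^{-1})$ with $m,m'\in M$ and $a\in Y$, and then invoke \cref{congcl} for the subsemilattice $F$ generated by $M$ and by the idempotents $ef\in E(W)$ produced by applying (C) to idempotents $e$. Condition (C) forces every idempotent above an element of $E(W)\subseteq F$, while (B) keeps all maximal idempotents in $F$, so the up-set of $F$ is all of $E(S)$ and, being a single congruence class by \cref{congcl}, forces $\tra/(\rho)=\omega_E$.

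The hard part will be this last step: showing that the conjugations injected into the trace by $\chi_W$ via \cref{tracejoin}, added to the collapse $M\times M$ of the maximal idempotents, genuinely fuse the whole semilattice into one class and not merely the up-set of $\langle M\rangle$. The delicate issues are the connectivity linking the conjugate blocks $\{ama^{-1}\}$ back to $M$ and to the $E(W)$-witnesses so that \cref{congcl} applies to a subsemilattice whose up-set is all of $E(S)$, and the left/right bookkeeping (through $\omega=\omega_{-1}$) needed to reconcile the right-multiplicative condition $af\in W$ of (C) with the left-handed relations controlling $\tra/$ and $\ink/$. The remaining friction is matching the finitely generated but not necessarily full $W$ of (C) against the full but only almost finitely generated $Y$ of the decomposition; handling that mismatch carefully is where most of the effort will go.
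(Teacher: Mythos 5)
Your converse contains a genuine gap: the generating set $(M\times M)\cup\{(w,ww^{-1})\mid w\in W_0\}$ is too small, and the congruence $\rho=\nu_{\langle M\times M\rangle}\vee\chi_W$ it generates need not have universal trace -- the ``hard part'' you flag is not merely delicate, it is false as set up. Take $S=E=\{1\}\cup\{e_i\mid i\in\mathbb{N}\}\cup\{0\}$ with $1$ an identity, $0$ a zero and $e_ie_j=0$ for $i\neq j$. Here $M=\{1\}$, condition (C) holds with $W=\{0,1\}$, and $\omega=\langle\{(1,0)\}\rangle$ is finitely generated; but every pair in your generating set is reflexive (all elements are idempotent, so $w=ww^{-1}$), whence your $\rho$ is the identity congruence. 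The point is that $M\times M$ together with the conjugation relations injected by $\chi_W$ via \cref{tracejoin} does not fuse $E(W)$ into a single trace class, and without that neither your application of \cref{congcl} (which needs $F\times F\subseteq\tra/(\rho)$ for a subsemilattice $F\supseteq E(W)$, not just that the up-set of $F$ is $E$) nor your climb $a\ \rho\ af\ \rho\ afa^{-1}\ \rho\ aa^{-1}$ can get started. The repair is exactly the paper's choice of trace generators: writing $W=|V|$ for a finite inverse set $V$, one must take $\tau=\langle X\times X\rangle$ with $X=\{vv^{-1}\mid v\in V\}\cup M$; then the pairs $(vv^{-1},v'v'^{-1})$ allow the chain $v_1v_1^{-1}\ \chi\ v_1\ \nu\ v_1v_2v_2^{-1}\ \chi\ \cdots\ \chi\ v_1\cdots v_n$ which places all of $W$ (in particular all of $E(W)$) in one $\rho$-class, after which $a=am\ \rho\ af\in W$ finishes the argument directly, with no need to verify $\tra/(\rho)=\omega_E$ separately.

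Your forward direction follows the paper's route in outline: \cref{gensetlem} for the finiteness and covering of maximal idempotents, and the saturation $S=\bigcup_{t\in|Y\cup E|}[t]_{\nu_\omega}$ coming from \cref{latprop2} to produce, for each $a$, an idempotent $e$ with $ae\in|Y\cup E|$. But the step you defer -- replacing the full, only almost finitely generated $|Y\cup E|$ by a genuinely finitely generated $W$ -- is the substantive one, and it is resolved in the paper by a further use of \cref{tracejoin} and \cref{congcl}: since $\omega_E=\langle Z\times Z\rangle$ for the finite set $Z=X\cup\{aea^{-1}\mid a\in|Y|,\ e\in X\}$, the up-set of the subsemilattice generated by $Z$ is a single $\omega_E$-class and hence all of $E$, so every idempotent lies above some $f\in Z\subseteq W=|X\cup Y|$; this is what lets one convert $ae\in|Y\cup E|$ into $af\in W$ in both the idempotent and non-idempotent cases. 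As written, your sketch asserts the conclusion of this reduction without an argument.
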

\begin{proof}
Suppose first that $\omega$ is generated by a finite set, with $Y\subseteq S,\ X\subseteq E$ finite sets and $\tau=\langle X\times X\rangle,$ and $\omega=\chi_{|Y|}\vee\nu_\tau.$  

Applying \cref{gensetlem} we have that $S$ has finitely many maximal idempotents, and that every idempotent is below some maximal idempotent.  We may assume that all maximal idempotents of $S$ lie in $X.$

Let $Z= X\cup\{ aea^{-1}\ |\ a\in |Y|,\ e,\in X\},$ then by \cref{tracejoin} we have that $\omega_E=\langle Z\times Z\rangle.$  Let $F$ be the subsemilattice generated by $Z,$ and note that $F$ contains all maximal idempotents of $S$.  Let $K=\{ e\in E\ |\ \exists f\in F \text{ with } f\leq e\}.$  Then by \cref{congcl} $K$ is a congruence class of $\omega_E,$ so $K=E,$ hence given $e\in E$ there is some $f\in Z$ such that $f\leq e.$  

Recall that $|Y\cup E|=E \cup |Y|E,$ and note that as $Y,X$ are finite $W=|X\cup Y|$ is finitely generated.  We also observe that $Z\subseteq W.$  Since $\chi_{|Y|}\vee\nu_\tau=\omega$ and $\ink/(\omega)=S$ we have that the $\nu_{\omega}$ closure of $|Y\cup E|$ is $S,$ i.e. $S=\bigcup_{t\in |Y\cup E|} [t]_{\nu_\omega}.$  Hence for each $a\in S$ there is some $e\in E$ with $ae\in|Y\cup E|.$ 

If $ae\in (|Y\cup E|)\cap E$ then we know there is some $f\in Z\subseteq W$ such that $f\leq ae.$  Then as $fae=f=ea^{-1}f$ we have
$$ af= aea^{-1}f=(fae)(ea^{-1}f)=ff=f, $$
so $af\in W.$

For $ae\in |Y\cup E|\backslash E$ there is some $b\in |Y|,$ and $e^\prime\in E(S)$ with $be^\prime=ae.$  Choose $f\in Z$ with $f\leq ee^\prime.$  Then $af=aef=be^\prime f=bf\in W.$

Conversely assume that $S$ is as claimed, and $W=|V|$ for some finite inverse set $V\subseteq S.$  Let $T=|V\cup E|,$ and $X=\{ vv^{-1}\ |\ v\in V\}.$  We may also assume that all maximal idempotents are in $X.$ let $\tau=\langle X\times X\rangle.$  Let $\rho=\chi_T\vee\nu_\tau.$  We first note that for $a,b\in W$ we have $a\ \rho\ b,$ indeed suppose $a=v_1\dots v_n,$ then:
$$v_1v_1^{-1}\ \chi\ v_1v_1^{-1}v_1\ \nu\ v_1v_2v_2^{-1}\ \chi\ v_1v_2v_2^{-1}v_2\ \dots\ \nu\ v_1\dots v_nv_n^{-1}\ \chi\ v_1\dots v_n=a.$$
Similarly we obtain $v_1v_1^{-1}\ \rho\ b,$ and hence $a\ \rho\ b.$  In particular this gives that for $e,f\in E(W)$ we have $e\ \rho\ f.$ 

By assumption for $a\in S$ we have some $f\in E(W)$ with $af\in W.$  Choose $m$ a maximal idempotent with $m\geq a^{-1}a.$  As $m,f\in E(W)$ we then have $m\ \rho\ f,$ and thus $a=am\ \rho\ af.$  Hence $\rho=\omega.$ 
\end{proof}

\section*{Acknowledgements}
This project forms part of the work toward my PhD at the University of York, funded by EPSRC.  I would like to thank my supervisor, Professor Victoria Gould, for all her help and guidance during this project.

\begin{small}
\nocite{*}
\bibliographystyle{abbrv}
\bibliography{references}
\end{small}
\end{document}